\newtheorem{theorem}{Theorem}[section]
\newtheorem{lemma}[theorem]{Lemma}
\theoremstyle{definition}
\newtheorem{definition}[theorem]{Definition}
\theoremstyle{remark}
\newtheorem{remark}[theorem]{Remark}
\newtheorem*{remark*}{Remark}
\numberwithin{equation}{section}
\def\intslash{\rlap{\kern  .32em $\mspace {.5mu}\backslash$ }\int}
\def\qsl{{\rlap{\kern  .32em $\mspace {.5mu}\backslash$ }\int_{Q_x}}}
\def\off{{\text{\rm off}}}
\def\R{{\mathbb R}}
\def\Z{{\mathbb Z}}
\def\M{{\mathcal M}}
\def\C{{\mathcal C}}
\def\A{{\mathcal A}}
\def\Q{{\mathcal Q}}
\def\esssup{\mathop{\mathrm{ess \ sup}}}
\def\ot{\overline{\otimes}}
\def\S{\mathbf S}
\def\Q{\mathcal Q}
\def\pari{\partial}
\def\Ga{\Gamma}
\def\ga{\gamma}
\def\Th{\Theta}
\def\eg{{\it e.g. }}
\def\supp{{\text{\rm supp}}}
\def\inn#1#2{\langle#1,#2\rangle}
\def\rta{\rightarrow}
\def\card{\text{\rm card}}
\def\lc{\lesssim}
\def\pv{\text{\rm p.v.}}
\def\alp{\alpha}
\def\del{\delta}             
\def\eps{\varepsilon}
\def\zet{\zeta}
\def\tet{\theta}
\def\ka{\kappa}
\def\lam{\lambda}            \def\Lam{\Lambda}
\def\si{\sigma}              
\def\vphi{\varphi}
\def\om{\omega}              \def\Om{\Omega}
\def\fr{\frac}
\newcommand{\Be}{\begin{equation}}
\newcommand{\Ee}{\end{equation}}
\newcommand{\Bes}{\begin{equation*}}
\newcommand{\Ees}{\end{equation*}}
\newcommand{\Bsp}{\begin{split}}
\newcommand{\Esp}{\end{split}}
\newcommand{\Bm}{\begin{multline}}
\newcommand{\Em}{\end{multline}}
\newcommand{\Bea}{\begin{eqnarray}}
\newcommand{\Eea}{\end{eqnarray}}
\newcommand{\Beas}{\begin{eqnarray*}}
\newcommand{\Eeas}{\end{eqnarray*}}
\newcommand{\Benu}{\begin{enumerate}}
\newcommand{\Eenu}{\end{enumerate}}
\newcommand{\Bi}{\begin{itemize}}
\newcommand{\Ei}{\end{itemize}}
\begin{document}

\title[Noncommutative maximal operators]{Noncommutative maximal operators with rough kernels}

\author{Xudong Lai}
\address{Xudong Lai: Institute for Advanced Study in Mathematics, Harbin Institute of Technology, Harbin, 150001, People's Republic of China}
\email{xudonglai@hit.edu.cn}
\thanks{This work was supported by National Natural Science Foundation of China (No. 11801118), China Postdoctoral Science Foundation (No. 2017M621253, No. 2018T110279)  and  Fundamental Research Funds for the Central Universities (No. FRFCU5710050121).}

\subjclass[2010]{Primary 46L52, 42B25, Secondary 46L51, 42B20}



\keywords{Noncommutative $L_p$ space, weak (1,1) bound, maximal operator, singular integral operator, rough kernel}

\begin{abstract}
This paper is devoted to the study of noncommutative maximal operators with rough kernels. More precisely, we prove the weak type $(1,1)$ boundedness for noncommutative maximal operators with rough kernels. The proof of weak type (1,1) estimate is based on the noncommutative Calder\'on-Zygmund decomposition.
To deal with the rough kernel, we use the microlocal decomposition in the proofs of both the bad and good functions.
\end{abstract}

\maketitle

\section{Introduction and state of main result}
In recent years, there are extensive research on the noncommutative harmonic analysis, especially on the noncommutative Calder\'on-Zygmund theory (see \eg \cite{Par09}, \cite{MP09}, \cite{Cad18}, \cite{CXY13}).
The main content of this topic is focused on investigating the boundedness property of various operators in harmonic analysis on the noncommutative $L_p$ space.
Due to the lack of commutativity (i.e. $ab=ba$ may not hold in general case), many problems in the study of noncommutative Calder\'on-Zygmund theory seem to be more difficult, for instance the weak type (1,1) bound of integral operator.

It is well known that the real variable theory of classical harmonic analysis is initialed by A. P. Calder\'on and A. Zygmund in 1950s (see \cite{CZ52}). One of the remarkable technique in \cite{CZ52} is the so-called Calder\'on-Zygmund decomposition, which is now a widely used method in harmonic analysis.
This technique not only gives a real variable method to show weak type (1,1) bounds of singular integrals, but also provides a basic idea of stopping time arguments for many topics in harmonic analysis, such as the theory of Hardy and BMO spaces (see \eg \cite{Gra14C}, \cite{Gra14M}, \cite{Ste93}).
The noncommutative Calder\'on-Zygmund decomposition was recently established in \cite{Par09} via the theory of noncommutative martingales.
With this tool,  the weak type (1,1) bound theory of the standard Calder\'on-Zygmund operator was developed there. It was pointed out in \cite{Par09} that the noncommutative Calder\'on-Zygmund decomposition and the related method should open a door to work for more general class of operators. For the subsequent works related to weak type (1,1) problem and noncommutative Calder\'on-Zygmund decomposition, we refer to see \cite{MP09}, \cite{Cad18}, \cite{CPSZ19}, \cite{CSZ18}, \cite{HX21}, \cite{HLX22}, \cite{CCP22} and the references therein.

On the other hand, the classical theory of singular integral operator tells us that to ensure the weak type (1,1) bound of the Calder\'on-Zygmund operator, the regularity condition of kernel can be relaxed to the so-called H\"ormander condition (see \eg \cite{Hor60} or \cite{Gra14C}). Moreover, A. P. Calder\'on and A. Zygmund in \cite{CZ56} further studied the singular integral operator with a rough homogeneous kernel defined by
\Be\label{e:19Tom}
\pv\int_{\R^d}\fr{\Om(x-y)}{|x-y|^d}f(y)dy
\Ee
and established its $L_p$ boundedness for all $1<p<\infty$. For its weak type (1,1) boundedness, it was quite later showed by S. Hofmann \cite{Hof88} (independently by M. Christ and Rubio de Francia \cite{CR88}) in two dimensions and by  A. Seeger in higher dimensions \cite{See96} (see further result by T. Tao \cite{Tao99}). Therefore a natural question inspired by work \cite{Par09} is that can we weaken the Lipschitz regularity of  kernel to the H\"ormander condition or even rough homogeneous kernel. This problem has been open since then. The purpose of this paper is to develop some theory in this aspect for a class of rough operators.
We consider the most fundamental operator: the   maximal operator with a rough kernel which is defined by (in the sense of classical harmonic analysis)
\Be\label{e:19mom}
M_\Om f(x)=\sup_{r>0}|M_rf(x)|,\quad M_rf(x)=\fr{1}{|B(x,r)|}\int_{B(x,r)}\Om(x-y)f(y)dy
\Ee
where $B(x,r)$ is a ball in $\R^d$ with center $x$ and radius $r$, the kernel $\Om$ is  a  homogeneous function defined on $\R^d\setminus\{0\}$ with degree zero, that is
\begin{equation}\label{e:19Hom}
\Om(rx')=\Om(x'),\text{ for any $r>0$ and $x'\in\S^{d-1}$}.
\end{equation}

Notice that the maximal operator $M_\Om$ is a generalization of the Hardy-Littlewood maximal operator (by setting $\Om$ as a constant then $M_\Om$ is exactly the Hardy-Littlewood maximal operator). $M_\Om$ is very important in the theory of rough singular integral since it could be used to control many operators with rough kernels, which just like that the Hardy-Littlewood maximal operator plays an important role in analysis. By the method of rotation, it is easy to see that $M_\Om$ is bounded on $L_p(\R^d)$ for all $1<p\leq\infty$ if $\Om\in L_1(\S^{d-1})$ (see \eg \cite{Gra14C}).
However, the weak (1,1) boundedness of $M_\Om$ is quite challenging.
It was proved by M. Christ \cite{Chr88} that $M_\Om$ is of weak type (1,1) if $\Om\in L_q(\S^1)$ with $1<q\leq\infty$ in two dimensions. Later M. Christ and Rubio de Francia \cite{CR88} showed in higher dimensions $M_\Om$ is weak (1,1) bounded if $\Om\in L\log^+ L(\S^{d-1})$ by a depth investigation of the geometry in Euclidean space.
For more topics including open problems related to the maximal operator $M_\Om$, we refer to see the survey article \cite{Ste98}, \cite{GS99}, \cite{GSP17} and the references therein.

The noncommutative version of $M_\Om$ should be important in the theory of noncommutative rough singular integral operators as expected. For instance, the noncommutative $M_\Om$ will play a crucial role in the study of the noncommutative maximal operator of truncated operator in \eqref{e:19Tom}. In this paper, we will study the boundedness of $M_\Om$ on the noncommutative $L_p$ space for $1\leq p\leq \infty$. In a special case $\Om$ is a constant (i.e. $M_\Om$ is the Hardy-Littlewood maximal operator), T. Mei \cite{Mei07} has investigated its noncommutative $L_p(1<p\leq\infty)$ and weak type $(1,1)$ boundedness. For general kernel $\Om$, there is no proper theory for the noncommutative $M_\Om$. To illustrate our noncommutative result of $M_\Om$, we should first give some basic notation.

Let us first introduce the noncommutative $L_p$ space.
Let $\M$ be a semi-finite von Neumann algebra equipped with a normal semifinite faithful (\emph{n.s.f.} in short) trace $\tau$. We consider the algebra $\A_B$ of essentially bounded $\M$-valued function
\Bes
\A_B=\big\{f:\R^d\rta\M\ |\ f\ \text{is strong measurable such that}\ \esssup_{x\in\R^d}\|f(x)\|_\M<\infty.\big\}
\Ees
equipped with the \emph{n.s.f.} trace $\vphi(f)=\int_{\R^d}\tau(f(x))dx$. Let $\A$ be the weak-operator closure of $\A_B$. Then $\A$ is a von Neumann algebra.
For $1\leq p<\infty$, define $L_p(\M)$ as the noncommutative
$L_p$ space associated to the pairs
$(\M,\tau)$ with the $L_p$ norm given by $\|x\|_{L_p(\M)}=(\tau(|x|^p))^{1/p}$.
The space
$L_p(\mathcal{A})$ is defined as the closure of $\mathcal{A}_B$
with respect to the following norm
\Be\label{e:19deflp}
\|f\|_{L_p(\A)}= \Big( \int_{\R^d} \tau \,
\big( |f(x)|^p \big) \, dx \Big)^{\frac1p},
\Ee
which means that $L_p(\A)$ is the noncommutative
$L_p$ space associated to the pairs
$(\A,\varphi)$. On the other hand, from \eqref{e:19deflp} we see that
$L_p(\mathcal{A})$ is isometric to the Bochner $L_p$ space with
values in $L_p(\mathcal{M})$. For convenience, we set $L_\infty(\M)=\M$ and $L_\infty(\A)=\A$ equipped with the operator
norm.
The lattices of projections are written as $\M_\pi$ and $\A_{\pi}$,
while $1_{\M}$ and $1_{\A}$ stand for the unit elements. Let $L_p^+(\A)$ be the positive part of $L_p(\A)$.
A lot of basic properties of classical $L_p$ spaces, such as Minkowski's inequality, H\"older's inequality, dual property, real and complex interpolation, etc,  have been transferred to
this noncommutative setting.
We refer to the very detailed introduction in \cite{Par09} or the survey article \cite{PX03} for more about the noncommutative $L_p$ space, the noncommutative $L_{1,\infty}$ space and related topics.

We next define a noncommutative analogue of $M_\Om$. For two general elements belong to a von Neumann algebra, they may not be comparable (i.e. neither $a<b$ nor $a\geq b$ holds for $a,b\in\A$). Hence it is difficult to define the noncommutative maximal function directly. This obstacle could be overcome by straightforwardly define the maximal  weak type (1,1) norm or $L_p$ norm.
We adopt the definition of the noncommutative maximal norm
introduced by G. Pisier \cite{Pis98} and M. Junge \cite{Jun02}.
\begin{definition}
For any index set $I$, we define
$L_p(\M;\ell_\infty(I))$ the space of all sequences
$x=\{x_n\}_{n\in I}$ in $L_p(\M)$ which admits a factorization of the
following form: there exist $a, b\in L_{2p}(\M)$ and a bounded
sequence $y=\{y_n\}_{n\in I}$ in $L_\infty(\M)$ such that
 $x_n=ay_nb$, $\forall\; n\in I$.
The norm of  $x$ in $L_p(\M;\ell_\infty(I))$ is given by
 $$\|\{x_k\}_{k\in I}\|_{L_p(\M;\ell_\infty(I))} =\inf\big\{\|a\|_{L_{2p}(\M)}\,
 \sup_{n\in I}\|y_n\|_{L_\infty(\M)}\,\|b\|_{L_{2p}(\M)}\big\} ,$$
where the infimum is taken over all factorizations of $x$ as above. We define a sequences $x=\{x_k\}_{k\in I}$ in $L_{1,\infty}(\M)$ with its quasi-norm given by
\begin{align*}
\|\{x_k\}_{k\in I}\|_{\Lambda_{1,\infty}(\M;\ell_{\infty}(I))}
=\sup_{\lambda>0}\lambda\inf_{e\in\M_\pi}
\Big\{\tau(e^{\perp}):\|ex_ke\|_{\infty}\leq\lambda\ \mbox{for\ all}\ k\in I\Big\}.
\end{align*}
\end{definition}

If $x=\{x_n\}_{n\in I}$ is a sequence of positive elements, then $x\in L_p(\M;\ell_\infty(I))$ if and only if there exists a positive element $a\in L_p(\M)$ such that $0<x_n\leq a$, and
\Be\label{e:19positivem}
\|x\|_{L_p(\M;\ell_\infty(I))}=\inf\{\|a\|_{L_p(\M)}:\ 0<x_n\leq a, \forall n\in I\};
\Ee
\begin{align}\label{e:19weakm11}
\|(x_k)_{k\in I}\|_{\Lambda_{1,\infty}(\M;\ell_{\infty}(I))}=\sup_{\lambda>0}\lambda\inf_{e\in\M_\pi}
\Big\{\tau(e^{\perp}): ex_ne\leq\lambda\ \mbox{for\ all}\ n\Big\}.
\end{align}

Now we can state our main result as follows.

\begin{theorem}\label{t:19}
Suppose that $\Om$ satisfies \eqref{e:19Hom} and $\Om\in L(\log^+ L)^{2}(\S^{d-1})$. Then the operator sequences $\{M_r\}_{r>0}$ is of maximal weak type (1,1), i.e.
\Bes
\|\{M_rf\}_{r>0}\|_{\Lambda_{1,\infty}(\A,\ell_\infty(0,\infty))}\lc \C_\Om\|f\|_{L_1(\A)},
\Ees
where $\C_\Om$ is a constant depends only on the dimension $d$ and $\Om$.
Equivalently, for every $f\in L_1(\A)$ and $\lam>0$, there exists a projection $e\in\A_\pi$ such that
    \Bes
\sup_{r>0}\|eM_rfe\|_{L_\infty(\A)}\leq\lam \ \ \text{and}\ \ \ \lam\vphi(1_\A-e)\lc\C_\Om\|f\|_{L_1(\A)}.
    \Ees

\end{theorem}

It is very easy to show that $\{M_r\}_{r>0}$ is of maximal strong type $(p,p)$ for $1<p\leq \infty$ by the method of rotation. For completeness, we give a proof in the appendix for this result.

The strategy in the proof of Theorem \ref{t:19} is as follows. Firstly we convert the study of the maximal operator to a linearized singular integral operator $T$ with a rough kernel (see Section \ref{s:193}).
Secondly, to prove the weak type (1,1) bound of this singular integral operator $T$, we use the noncommutative Calder\'on-Zygmund decomposition to split the function $f$ as two parts: good functions and bad functions (see Subsection \ref{s:1941}).
Roughly speaking, the proof is reduced to obtain some decay estimates for the good and bad functions separately.
For the proof related to the bad functions, since the kernel $\Om$ is rough, we will use a further decomposition, the so-called microlocal decomposition, to the operator $T_j$. Then we apply the $L_2$ norm and the $L_1$ norm to control the weak type estimate (see Lemma \ref{l:19L^2} and Lemma \ref{l:19L^1}), where vector valued Plancherel's theorem, orthogonal argument of geometric are involved in the proof of the $L_2$ estimate and the stationary phase method is used in the $L_1$ estimate (see Subsection \ref{s:1942} and the proofs in Section \ref{s:195}).
For the proof of good functions we use the so-called pseudo-localisation arguments to obtain some decay estimate for the $L_2$ norm of singular integral operator $T$ outside the support of functions on which it acts. To get such decay estimates, we adopt a similar method (the microlocal decomposition) from the proof of bad functions (see Subsection \ref{s:1943}).

In the classical Calder\'on-Zygmund decomposition, one can easily deal with the good function by the $L_2$ estimate. However the proof of good functions from the noncommutative Calder\'on-Zygmund is much elaborated as showed in the case of smooth kernel by J. Parcet \cite{Par09}. In this paper, to overcome the non smoothness of  kernel, we use the microlocal decomposition in the proofs of both bad and good functions. To the best knowledge of the author, this method seems to be new in the noncommutative Calder\'on-Zygmund theory.
We should point out that  the proof of  bad functions is quite different from that in the classical case by M. Christ and Rubio de Francia \cite{CR88}, where they  used the $TT^*$ argument to obtain some regularity of the kernel $T_jT_j^*$ by some depth geometry but without using the Fourier transform.
However our method presented in this paper heavily depends on the Fourier transform where Plancherel's theorem and the stationary phase method are involved. These ideas are mainly inspired by A. Seeger's work \cite{See96}.  Recall  the following important pointwise property is crucial in classical $TT^*$ argument:
$
|Q|^{-1}\int_Q|b_Q(y)|dy\lc\lam,
$
where $b_Q$ is a bad function from the Calder\'on-Zygmund decomposition which is supported in a cube $Q$.
Since in the noncommutative setting such kind of inequalities may not hold for the off-diagonal terms of bad functions,
our noncommutative $TT^*$ argument is more complicated than that of the classical case.
In fact only one pointwise property holds in the noncommutative Calder\'on-Zygmund decomposition: $q_kf_kq_k\lc\C^{-1}_\Om\lam q_k$ (see Lemma \ref{l:19cucu}) and all pointwise estimate in the proof should finally be transferred to this property (see Subsection \ref{s:1952} for the details in the proof of the $L_2$ estimate).

This paper is organized as follows. First the study for maximal operator of $M_r$ is reduced to a linearized singular integral operator in Section \ref{s:193}.
In Section \ref{s:194}, by the noncommutative Calder\'on-Zygmund decomposition and microlocal decomposition, we finish the proof of our main theorem based on the estimates of bad and good functions. The proofs of lemmas related to the bad functions are all presented in Section \ref{s:195}. In Section \ref{s:196}, we give all proofs of lemmas related to the good functions.
Finally in the appendix, we give a proof of strong type $(p,p)(1<p\leq\infty)$ for $\{M_r\}_{r>0}$.

\subsection*{Further remark} After we finish this manuscript, L. Cadilhac found a more efficient noncommutative Calder\'on-Zygmund decomposition (see \cite{HLX22} and \cite{CCP22}) so that the off-diagonal terms of the good functions vanish and the argument for the pseudo-localisation can be avoided.
Of course using this new Calder\'on-Zygmund decomposition, we only need to apply the $L_2$ estimate to deal with the good function and the proof related to the good functions in this paper can be greatly shorten. However we point out that using this new method, the proof for the bad functions will be significantly more complicated than our arguments presented in this paper. So our proof in this paper still has its own interest. Nevertheless, we hope to show this in the study of weak type $(1,1)$ boundedness for singular integral operators with rough kernels \eqref{e:19Tom} which is our ongoing work.

\subsection*{Notation} Throughout this paper, we only consider the dimension $d\ge2$ and the letter $C$ stands for a positive finite constant which is independent of the essential variables, not necessarily the same one in each occurrence. $A\lc B$ means $A\leq CB$ for some constant $C$. By the notation $C_\eps$  we mean that the constant depends on the parameter $\eps$. $A\approx B$ means that $A\lc B$ and $B\lc A$.
$\Z_+$ denotes the set of all nonnegative integers and $\Z_+^d=\underbrace{\Z_+\times \cdots\times \Z_+}_d.$ For $\alp\in\Z_+^d$ and $x\in\R^d$, we define $x^\alp=x_1^{\alp_1}x_2^{\alp_2}\cdots x_d^{\alp_d}$ and $|x|$ denotes the $\ell_2$ norm. $\forall s\in\R_+$, $[s]$ denotes the integer part of $s$.
For any set $A$ with finite elements, we define $\card(A)$ or $\#(A)$ as the number of elements in $A$.
Let $s\geq0$, we define $$\|\Om\|_{L(\log^+\!\!L)^s}:=\int_{\S^{d-1}}|\Om(\tet)|[\log(2+|\Om(\tet)|)]^sd\si(\tet),$$
where $d\si(\tet)$ denotes the sphere measure of $\S^{d-1}$. When $s=0$, we use the standard notation $\|\Om\|_{1}:=\|\Om\|_{L(\log^+\!\!L)^0}$.

Define $\mathcal{F}f$ (or $\hat{f}$) and $\mathcal{F}^{-1}f$ (or $\check{f}$) the Fourier transform and the inverse Fourier transform
of $f$ by
$$\mathcal{F}f(\xi)=\int_{\R^d} e^{-i\inn{x}{\xi}}f(x)dx,\ \ \ \ \mathcal{F}^{-1}f(\xi)=\fr{1}{(2\pi)^{d}}\int_{\R^d}e^{i\inn{x}{\xi}}{f(x)dx}.$$

Let $\Q$ be the set of all dyadic cubes in $\R^d$. For any $Q\in\Q$, denote by $\ell(Q)$ the side length of the cube $Q$. Let $s Q$ be the cube with the same center of $Q$ such that $\ell(s Q)=s\ell(Q)$. Given an integer $k \in \Z$, $\Q_k$ will be defined as the set of dyadic cubes of side length $2^{-k}$. Let $|Q|$ be the volume of the cube $Q$. If $Q\in\Q$ and $f: \R^d \to
\M$ is integrable on $Q$, we define its average as
$f_Q = |Q|^{-1} \int_Q f(y) \, dy.$

For $k\in\Z$, set $\sigma_{k}$ as the $k$-th dyadic $\sigma$-algebra, i.e., $\sigma_{k}$ is generated by the dyadic cubes with side lengths equal to $2^{-k}$. Let $\mathsf{E}_k$ be the
conditional expectation which is associated to the classical dyadic
filtration $\sigma_{k}$ on $\R^d$. We also use $\mathsf{E}_k$ for the
tensor product $\mathsf{E}_k \otimes id_\M$ acting on $\A$. Then for  $1
\le p < \infty$ and $f \in L_p(\A)$, we get that
$$\mathsf{E}_k(f) = \sum_{Q \in \Q_k}^{\null} f_Q \chi_Q,$$
where $\chi_Q$ is the characteristic function of $Q$.
Similarly, $\{\A_k\}_{k \in \Z}$ will stand for the corresponding
filtration, i.e. $\A_k = \mathsf{E}_k(\A)$. For simplicity, we will write the conditional expectation $f_{k}:=\mathsf{E}_k(f)$ and the martingale difference $\Delta_{k} (f):=f_{k}-f_{k-1}=: df_{k}$.

\vskip0.24cm

\section{Reduction to singular integral operator}\label{s:193}
\vskip0.24cm
In this section, we reduce the study of maximal operator of $M_r$ to a singular integral operator with a rough kernel. This will be done by several steps as follows.

\emph{Step 1}.  By decomposing the functions $\Om$ and $f$ as four parts (i.e. real positive part, real negative part, imaginary positive part, imaginary negative part), together with the quasi-triangle inequality for the quasi-norm $\|\cdot\|_{\Lambda_{1,\infty}(\A,\ell_\infty(0,\infty))}$,
we only consider the case that $\Om$ is a positive function and $f$ is positive in $\A$. Then by \eqref{e:19weakm11}, it is enough to show that for any $f\in L_1^+(\A)$ and  $\lam>0$ there exists a projection $e\in\A_\pi$ such that
\Be\label{e:19weakdef}
eM_rfe\leq\lam ,\ \forall r>0 \ \ \text{and} \ \ \lam\vphi(1_\A-e)\lc\C_\Om\|f\|_{L_1(\A)}.
\Ee

\emph{Step 2}. Next we show that the study of $M_r$ can be reduced to a dyadic smooth operator. More precisely,
let $\phi$ be a $C_c^\infty(\R^d)$, radial, positive function which is supported in $\{x\in\R^d: 1/2\leq|x|\leq2\}$ and $\sum_{i\in\Z}\phi_j(x)=1$ for all $x\in\R^d\setminus\{0\}$, where $\phi_j(x)=\phi(2^{-j}x)$. Define an operator $\mathfrak{M}_j$ by
$$\mathfrak{M}_jf(x)=\int_{\R^d}\fr{\Om(x-y)}{|x-y|^d}\phi_j(x-y)f(y)dy.$$
We will prove that the maximal operator of $\mathfrak{M}_j$ is of weak type (1,1) below and \eqref{e:19weakdef} follows from it.

\begin{theorem}\label{t:19dya}
Let $\Om$ be a positive function satisfying \eqref{e:19Hom} and $\Om\in L(\log^+ L)^{2}(\S^{d-1})$.
For any $f\in L_1^+(\A)$, $\lam>0$, there exists a projection $e\in\A_\pi$ such that
    \Bes
    \sup_{j\in\Z}\|e\mathfrak{M}_jfe\|_{L_{\infty}(\A)}\lc\lam,\ \ \ \ \lam\vphi(1_\A-e)\lc\C_\Om\|f\|_{L_1(\A)},
    \Ees
where the constant $\C_\Om$ only depends on $\Om$ and the dimension.
\end{theorem}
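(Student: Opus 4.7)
The plan is to apply the noncommutative Calder\'on-Zygmund decomposition at level $\lam$ and to control the action of $\mathfrak{M}_j$ on the good and bad parts separately, using in both steps a microlocal angular decomposition on $\S^{d-1}$ to compensate for the lack of smoothness of $\Om$. First I would complete the reduction started in Step 1 by linearising the supremum over $j$: fix an arbitrary measurable selector $j(\cdot):\R^d\to\Z$ and reduce the problem to a uniform weak-type $(1,1)$ bound for the operator $T_{j(\cdot)}f(x):=\mathfrak{M}_{j(x)}f(x)$. Applying the noncommutative Calder\'on-Zygmund decomposition (Lemma \ref{l:19cucu}) at height $\lam$ then produces $f=g+\sum_k b_k$ together with projections $q_k$ and $q=\bigwedge_k q_k$ satisfying the key relation $q_k f_k q_k\lc\lam q_k$; the final projection $e$ of the theorem will be built from $q$ intersected with a spectral projection chosen to handle the good part.

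For the good part I expect to prove an $L_2$ pseudo-localisation estimate: when $g$ is supported away from a dyadic cube $Q$ of side $2^{-k}$, the $L_2$ mass of $\mathfrak{M}_jg$ on $Q$ decays in $|j-k|$ and in an angular separation parameter. Because $\Om$ is only in $L(\log^+ L)^2(\S^{d-1})$, a direct smooth-kernel argument of Parcet type is unavailable, so I would perform a microlocal decomposition $\mathfrak{M}_j=\sum_{s\ge 0}\mathfrak{M}_j^s$ along cones of aperture $\approx 2^{-s}$ on $\S^{d-1}$, estimate each piece by Fourier-analytic means, and sum in $s$ against $\|\Om\|_{L(\log^+ L)^2}$. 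A Chebyshev-type projection argument on the resulting $L_2$ bound then yields the desired contribution $\lam\vphi(1_\A-e)\lc\|f\|_1$ from $g$.

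For the bad part, which is the core of the proof, I would decompose $T_{j(\cdot)}b=\sum_{j,k,s}\mathfrak{M}_j^sb_k$ microlocally as above and interpolate between an $L_2$ estimate (Lemma \ref{l:19L^2}, proved via vector-valued Plancherel together with a geometric orthogonality argument among the cones) and an $L_1$ estimate (Lemma \ref{l:19L^1}, proved via stationary phase on the kernel of $\mathfrak{M}_j^s$). The resulting decay in $s$ must absorb the $L(\log^+ L)^2$ norm of $\Om$, which is why exactly two logarithms are required in the hypothesis.

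The main obstacle is this last step. In the classical proof of Christ--Rubio de Francia \cite{CR88} one exploits the pointwise bound $|Q|^{-1}\int_Q|b_Q|\lc\lam$ inside a $TT^*$ argument that extracts regularity of $\mathfrak{M}_j\mathfrak{M}_j^*$ from Euclidean geometry. In the noncommutative setting this pointwise bound fails for the off-diagonal pieces of $b_k$, and the only usable substitute is the projection identity $q_k f_k q_k\lc\lam q_k$. Designing the Fourier-analytic $L_2$ and $L_1$ estimates so that every pointwise manipulation is ultimately reduced to this projection identity, rather than to a scalar pointwise bound, is where the bulk of the technical work will lie.
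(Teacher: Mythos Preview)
Your proposal has a genuine gap at the very first reduction. Linearising the supremum via a measurable selector $j(\cdot):\R^d\to\Z$ is a classical device that works commutatively because $\|\sup_j T_jf\|_{L_{1,\infty}}=\sup_{j(\cdot)}\|T_{j(\cdot)}f\|_{L_{1,\infty}}$; but the noncommutative maximal weak type asks for a \emph{single} projection $e\in\A_\pi$ with $e\mathfrak{M}_jfe\lc\lam$ for \emph{all} $j$ simultaneously. A uniform bound over selectors only produces a projection $e_{j(\cdot)}$ depending on the selector, and there is no way to pass from this family to one common $e$ without losing control of $\vphi(1_\A-e)$. This is exactly the obstacle that forces the paper to take a different route.

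The paper first splits $\mathfrak{M}_jf=T_jf+\tilde{M}_jf$, where $\tilde{M}_j$ carries the constant $(\si_{d-1})^{-1}\!\int_{\S^{d-1}}\Om$ and is handled by Mei's weak $(1,1)$ bound for the noncommutative Hardy--Littlewood maximal operator, while $T_j$ has the mean-zero kernel $\tilde{\Om}$. For the $T_j$ part the correct linearisation is via the \emph{square function}: one proves $\|\{T_jf\}_j\|_{L_{1,\infty}(\A;\ell_2^{rc})}\lc\C_\Om\|f\|_1$, and then the spectral projections $e_2=\chi_{(0,\lam]}\big((\sum_j|g_j|^2)^{1/2}\big)$ and $e_3=\chi_{(0,\lam]}\big((\sum_j|h_j^*|^2)^{1/2}\big)$ automatically give $\|eT_jfe\|_{L_\infty(\A)}\lc\lam$ for every $j$ at once. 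The square function itself is linearised by Cadilhac's noncommutative Khintchine inequality in $L_{1,\infty}$, producing the operator $Tf=\sum_j\eps_jT_jf$ on $L_\infty(\mathfrak{m})\ot\A$; it is this $T$ to which the Calder\'on--Zygmund decomposition and the microlocal analysis are applied. Your description of the microlocal arguments for good and bad functions is broadly in line with the paper, but the Rademacher structure is essential: the orthogonality $\|\sum_j\eps_ja_j\|_{L_2}^2=\sum_j\|a_j\|_{L_2}^2$ is what allows the $L_2$ estimates to be summed over $j$, and this has no analogue for the selector operator $T_{j(\cdot)}$. You also omit the subtraction of the mean of $\Om$; without it the square function of $\{\mathfrak{M}_j\}$ is not even $L_2$ bounded, so the $L_2$ side of your interpolation would fail from the start.
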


The proof of Theorem \ref{t:19dya} will be given later. We apply Theorem \ref{t:19dya} to show \eqref{e:19weakdef}. Let $\Om$ be a positive function and $f$ be positive in $L_1^+(\A)$. Then by our choice of  $\phi_j$, for any $r>0$, we have
\Bes
\begin{split}
M_r f(x)&=\fr{1}{|B(x,r)|}\int_{B(x,r)}\Om(x-y)f(y)dy\\
&=\fr{C_d}{r^d}\sum_{j\leq[\log r]+1}\int_{|x-y|\leq r}\phi_j(x-y)\Om(x-y)f(y)dy\lc\fr{1}{r^d}\sum_{j\leq[\log r]+1}2^{jd}\mathfrak{M}_j f(x).
\end{split}
\Ees
Notice that $\Om$ is positive and  $f\in L_1^+(\A)$, the inequality $e\mathfrak{M}_jfe\leq\lam$ is equivalent to $\|e\mathfrak{M}_jfe\|_{\A}=\|e\mathfrak{M}_jfe\|_{L_\infty(\A)}\leq\lam$. By Theorem \ref{t:19dya}, there exists a projection $e\in\A_\pi$ such that
    \Bes
    e\mathfrak{M}_jfe\lc\lam,\ \ \forall j\in\Z, \ \ \lam\vphi(1_\A-e)\lc\C_\Om\|f\|_{L_1(\A)}.
    \Ees
Then it is easy to see that for any $r>0$,
\Bes
eM_rfe\lc\fr{1}{r^d}\sum_{j\leq[\log r]+1}2^{jd} e\mathfrak{M}_j f e \lc\lam.
\Ees

\emph{Step 3}. We will reduce the study of maximal operator of $\mathfrak{M}_j$ to a class of square function. Notice that the kernel $\Om$ of $\mathfrak{M}_j$ has no cancelation. Formally we can not study the operator $\Big(\sum_j|\mathfrak{M}_j|^2\Big)^{1/2}$ directly since it may be not even $L_2$ bounded. To avoid such case, we define a new operator $T_j$ which is modified version of the operator $\mathfrak{M}_j$
\Be\label{e:19Tj}
T_jf(x)=\int_{\R^d}\phi_j(x-y)\fr{\tilde{\Om}(x-y)}{|x-y|^{d}}f(y)dy,\quad \Ee
where $\tilde{\Om}(x)=\Om(x)-\fr{1}{\si_{d-1}}\int_{\S^{d-1}}\Om(\tet)d\si(\tet)$, $\si_{d-1}$ is measure of the unit sphere. Then it is easy to see that $\tilde{\Om}$ has mean value zero over $\S^{d-1}$.
Then formally the study of maximal operator of $\mathfrak{M}_j$ may follow from that of square function $\Big(\sum_j|T_j|^2\Big)^{1/2}$ and maximal operator. In the following we use rigorous noncommutative language to explain how to do it. To define noncommutative square function, we should first introduce the so-called column and row function space.
Let $\{f_{j}\}_j$ be a finite sequence in $L_{p}(\A) (1\leq p\leq\infty)$. Define
$$\|\{f_{j}\}_j\|_{L_{p}(\A; \ell_{2}^{r})}=\|(\sum|f^{\ast}_{j}|^{2})^{\frac{1}{2}}\|_{L_p(\A)},\ \|(f_{j})\|_{L_{p}(\A; \ell_{2}^{c})}=\|(\sum|f_{j}|^{2})^{\frac{1}{2}}\|_{L_p(\A)}.$$
This procedure is also used to define the spaces
$L_{1,\infty}(\A; \ell_{2}^{r})$ and $L_{1,\infty} (\A; \ell_{2}^{c})$, i.e.
$$\|\{f_{j}\}_j\|_{L_{1,\infty}(\A; \ell_{2}^{r})}=\|(\sum|f^{\ast}_{j}|^{2})^{\frac{1}{2}}\|_{L_{1,\infty}(\A)},\ \|\{f_{j}\}\|_{L_{1,\infty}(\A; \ell_{2}^{c})}=\|(\sum|f_{j}|^{2})^{\frac{1}{2}}\|_{L_{1,\infty}(\A)}.$$
Let $L_{1,\infty}(\A,\ell_2^{rc})$ space be the weak type square function of $\{T_j\}_j$ defined as
\Bes
\|\{T_j\}_j\|_{L_{1,\infty}(\A,\ell_2^{rc})}=\inf_{T_jf=g_j+h_j}\Big\{\|\{g_j\}_j\|_{L_{1,\infty}(\A,\ell_2^c)}
+\|\{h_j\}_j\|_{L_{1,\infty}(\A;\ell_2^r)}\Big\}.
\Ees
We have the following weak type (1,1) estimate of square function of $\{T_j\}_j$.
\begin{theorem}\label{t:19sq}
Suppose that $\Om$ satisfies \eqref{e:19Hom} and $\Om\in L(\log^+ L)^{2}(\S^{d-1})$.
Let $T_j$ be defined in \eqref{e:19Tj}, then we have
\Bes
\|\{T_j\}_j\|_{L_{1,\infty}(\A,\ell_2^{rc})}\lc\C_\Om\|f\|_{L_1(\A)}
\Ees
where the constant $\C_\Om$ only depends on $\Om$ and the dimension.
\end{theorem}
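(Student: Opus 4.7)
The plan is to deduce Theorem~\ref{t:19sq} from the noncommutative Calder\'on--Zygmund decomposition of \cite{Par09} together with a microlocal decomposition of the operators $T_j$. After homogeneity, fix $\lam>0$ and $f\in L_1^+(\A)$ with $\|f\|_{L_1(\A)}=1$; applying the noncommutative Calder\'on--Zygmund decomposition at level $\lam$ produces a splitting $f=g+b$ with $b=\sum_Q b_Q$ indexed over a pairwise disjoint family of dyadic cubes, together with projections $q_k\in\A_\pi$ and an overall $q\in\A_\pi$ satisfying $\lam\vphi(1_\A-q)\lc 1$ and the key pointwise inequality $q_k f_k q_k\lc\lam q_k$ (Lemma~\ref{l:19cucu}). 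After this, it suffices to bound the row and column weak-type square function norms of $\{T_j g\}_j$ and $\{T_j b\}_j$ separately on the reduced algebra $q\A q$.

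For the good part, the target is an $L_2$ bound of the form $\|\{T_j g\}_j\|_{L_2(\A,\ell_2^c)}^2\lc\C_\Om\lam$, which combined with Chebyshev yields the required weak-$L_1$ estimate. The microlocal machinery enters here: one writes $T_j=\sum_s T_j^{(s)}$ by inserting a partition of unity on the sphere at angular scale roughly $2^{-s/2}$, so that each piece $T_j^{(s)}$ is essentially a Fourier multiplier supported in a thin cone. Plancherel, together with almost-orthogonality between different cones and different frequency shells, controls the $L_2$ sum in $j$ at a cost that is summable in $s$ against $\|\Om\|_{L(\log^+\! L)^2}$. A pseudo-localisation argument in the spirit of \cite{Par09}, coupling the dyadic martingale structure of $g$ with the frequency localisation of each $T_j^{(s)}$, then transfers the scalar bound to the noncommutative good function.

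For the bad part, write $T_j b=\sum_s T_j^{(s)} b$ with the same microlocal decomposition and split the range of $s$. For $s$ large relative to the scale $j-k(Q)$ determined by the Calder\'on--Zygmund cubes $Q$, apply Lemma~\ref{l:19L^2}: vector-valued Plancherel plus the geometric orthogonality between disjoint cones controls $\|\sum_j T_j^{(s)} b\|_{L_2(\A,\ell_2^c)}$, with every intermediate scalar inequality rewritten as an operator inequality and collapsed through $q_k f_k q_k\lc\lam q_k$. For $s$ small, cancellation is harder to extract, so one falls back on Lemma~\ref{l:19L^1}: stationary phase gives pointwise decay of the kernel of $T_j^{(s)}$ and the triangle inequality in $L_1$ closes the estimate. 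Summing the two regimes against the angular Orlicz norm yields the $L(\log^+\! L)^2$ dependence in $\C_\Om$.

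The main obstacle is the off-diagonal blocks of $b$. The classical proof of \cite{CR88} relies on a $TT^\ast$ argument resting on the pointwise bound $|Q|^{-1}\int_Q |b_Q(y)|\,dy\lc\lam$, which simply fails for the off-diagonal blocks in the noncommutative setting. The substitute is to run the microlocal $L_2$ analysis entirely at the level of operator inequalities, absorbing every intermediate scalar bound through the single available pointwise estimate $q_k f_k q_k\lc\lam q_k$; ensuring that this collapse is compatible with both the sum over the microlocal index $s$ and the almost-orthogonality between cones is the technical crux of the argument, and is what forces the $L(\log^+\! L)^2$ integrability rather than a milder condition on $\Om$.
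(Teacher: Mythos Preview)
Your outline captures the paper's \emph{technical} engine (noncommutative Calder\'on--Zygmund decomposition plus microlocal splitting, Lemmas~\ref{l:19L^2} and \ref{l:19L^1}), but it skips the step that is the actual proof of Theorem~\ref{t:19sq} in the paper: the reduction to a \emph{linearised} operator. The paper does not run the CZ decomposition on the square function directly. Instead it invokes Cadilhac's noncommutative Khintchine inequality in $L_{1,\infty}$,
\[
\Big\|\sum_j\eps_j T_j f\Big\|_{L_{1,\infty}(L_\infty(\mathfrak{m})\ot\A)}\ \approx\ \|\{T_jf\}_j\|_{L_{1,\infty}(\A;\ell_2^{rc})},
\]
so that Theorem~\ref{t:19sq} follows from the weak $(1,1)$ bound for the single randomised operator $Tf=\sum_j\eps_j T_jf$ (Theorem~\ref{p:19m}). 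All of the CZ and microlocal analysis you describe is then carried out for this $T$, where ordinary Chebyshev and the quasi-triangle inequality in $L_{1,\infty}$ are available.

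Without that linearisation there is a genuine gap in your plan. First, the sentence ``it suffices to bound the row and column weak-type square function norms of $\{T_j g\}_j$ and $\{T_j b\}_j$ separately'' is not justified: the norm $\|\cdot\|_{L_{1,\infty}(\A;\ell_2^{rc})}$ is an \emph{infimum} over decompositions $T_jf=g_j+h_j$, and there is no evident quasi-triangle inequality letting you pass from $f=g+b$ to separate estimates for the two pieces. Second, even granting such a split, your bad-function argument mixes an $L_2$ piece (Lemma~\ref{l:19L^2}) and an $L_1$ piece (Lemma~\ref{l:19L^1}) of the \emph{same} $T_j$; combining these inside a single column square function $(\sum_j|\cdot|^2)^{1/2}$ is not straightforward, since operator inequalities of the form $(\sum_j|A_j+B_j|^2)^{1/2}\le (\sum_j|A_j|^2)^{1/2}+(\sum_j|B_j|^2)^{1/2}$ fail in general. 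The Rademacher linearisation is precisely what lets the paper apply Chebyshev at orders $\lam^{-2}$ and $\lam^{-1}$ to the two pieces separately and then add. Finally, note that the paper treats the diagonal part $g_d$ and the off-diagonal part $g_{\mathrm{off}}$ of the good function by quite different arguments; your sketch does not make this distinction.
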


In the following we use Theorem \ref{t:19sq} to prove Theorem \ref{t:19dya}.
Our goal is to find a projection $e\in\A_\pi$ such that
    \Bes
    \sup_{j\in\Z}\|e\mathfrak{M}_jfe\|_{L_{\infty}(\A)}\lc\lam,\ \ \ \ \lam\vphi(1_\A-e)\lc\C_\Om\|f\|_{L_1(\A)}.
    \Ees
We first decompose $\mathfrak{M}_jf$ as two parts
\Bes T_jf(x)+\fr{1}{\si_{d-1}}\int_{\S^{d-1}}\Om(\tet)d\si(\tet)\int_{\R^d}\fr{\phi_j(x-y)}{|x-y|^d}f(y)dy=:T_jf(x)+\tilde{M}_jf(x).
\Ees
Notice that $\fr{1}{\si_{d-1}}\int_{\S^{d-1}}\Om(\tet)d\si(\tet)$ is a harmless constant which is bounded by $\|\Om\|_1$. By using the fact that the noncommutative  Hardy-Littlewood maximal operator is of weak type (1,1) (see \eg \cite{Mei07}), it is not difficult to see that the maximal operator of $\tilde{M}_j$ is of weak type (1,1). Thus we can find a projection $e_{1}\in\A_\pi$ such that
\Bes
\sup_{j\in\Z} \big\|
e_{1} \tilde{M}_{j}f e_{1}\big\|_{L_\infty(\A)} \leq
\lam \qquad \mbox{and} \qquad \lambda\varphi \big( \
1_\A -
e_{1} \big)\lc\|\Om\|_1\|f\|_{L_1(\A)}.
\Ees

Next we utilize Theorem \ref{t:19sq} to construct other projection. By the definition of infimum,  there exists a decomposition  $T_{j}f=g_{j}+h_{j}$ satisfying $$\|\{g_{j}\}\|_{L_{1,\infty}(\A; \ell_{2}^{c})}+\|\{h_{j}\}\|_{L_{1,\infty}(\A; \ell_{2}^{r})}\leq\fr{1}{2}\C_\Om\|f\|_{L_1(\A)}.$$
We now take $e_{2}=\chi_{(0,\lambda]}\big((\sum\limits_{j\in\Z}|g_{j}|^{2})^{\frac{1}{2}}\big)$ and $e_{3}=\chi_{(0,\lambda]}\big((\sum\limits_{j\in\Z}|h^{\ast}_{j}|^{2})^{\frac{1}{2}}\big)$, then
$$
\big\|\big((\sum_{j\in\Z}|g_{j}|^{2})^{\frac{1}{2}}\big)
e_{2}\big\|_{L_\infty(\A)} \leq
\lambda \qquad \text{and} \qquad \lambda\varphi \big( 1_\A -
e_{2} \big) \lesssim\C_\Om\|f\|_{L_1(\A)}.
$$
Also for $e_{3}$, we have
$$
\big\|\big((\sum_{j\in\Z}|h^{\ast}_{j}|^{2})^{\frac{1}{2}}\big)
e_{3}\big\|_{L_\infty(\A)} \leq
\lambda \qquad \text{and} \qquad \lambda\varphi \big( 1_\A -
e_{3} \big) \lesssim\C_\Om\|f\|_{L_1(\A)}.
$$

Let $e=e_1\bigwedge e_{2}\bigwedge e_{3}$. Then it is easy to see that
$$\sup_{j\in\Z} \big\|
e \tilde{M}_{j}f e\big\|_{L_\infty(\A)} \leq\lam,\quad
\lambda\varphi \big( 1_\A -
e \big) \lesssim\C_\Om\|f\|_{L_1(\A)}.$$
Hence to finish the proof of Theorem \ref{t:19dya}, it is sufficient to show
$$\sup_{j\in\Z} \big\|eT_jfe\big\|_{L_\infty(\A)}\lc\lambda.$$
Recall the definition of $L_\infty(\A)$, $\|f\|_{L_\infty(\A)}=\|f\|_{\A}$. Then we get
\Bes
\big\|e T_jfe\big\|_{L_\infty(\A)}\leq \big\|eg_{j}e\big\|_{\A}+\big\|eh_{j}e\big\|_{\A}
=\big\|eg_{j}e\big\|_{\A}+\big\|eh^{\ast}_{j}e\big\|_{\A}.
\Ees
Now using polar decomposition $g_j=u_j|g_j|$ and $h_j^*=v_j|h_j^*|$, we continue to estimate the above term as follows,
\begin{align*}
\big\|eu_{j}|g_{j}|e\big\|_{\A}+\big\|ev_{j}|h^{\ast}_{j}|e\big\|_{\A}
&\ \leq\big\||g_{j}|e\big\|_{\A}+\big\||h^{\ast}_{j}|e\big\|_{\A}
=\big\|e|g_{j}|^{2}e\big\|^{\frac{1}{2}}_{\A}+\big\|e|h^{\ast}_{j}|^{2}e\big\|^{\frac{1}{2}}_{\A}\\
   &\ \leq \big\|e\sum_{j\in\Z}|g_{j}|^{2}e\big\|^{\frac{1}{2}}_{\A}+\big\|e\sum_{j\in\Z}|h^{\ast}_{j}|^{2}e\big\|^{\frac{1}{2}}_{\A}\\
   &\ = \big\|(\sum_{j\in\Z}|g_{j}|^{2})^{\frac{1}{2}}e_{2}e\big\|_{\A}+\big\|(\sum_{j\in\Z}|h^{\ast}_{j}|^{2})^{\frac{1}{2}}e_{3}e\big\|_{\A}\lc\lambda.
\end{align*}
Hence we finish the proof of Theorem \ref{t:19dya}.

\emph{Step 4}. We reduce the study of square function to a linear operator. To simplify the notation, we still use $\Om$ in \eqref{e:19Tj}, i.e.
\Be\label{e:19Tjn}
T_jf(x)=\int_{\R^d}K_j(x-y){{\Om}(x-y)}f(y)dy,\quad \text{with}\quad K_j(x)=\phi_j(x)|x|^{-d},\Ee
but we suppose that $\Om$ satisfies the cancelation property $\int_{\S^{d-1}}\Om(\tet)d\si(\tet)=0$. To linearize the square function, we use the following noncommutative Khintchine's inequality in $L_{1,\infty}(\A,\ell_2^{rc})$ which was recently established by L. Cadilhac \cite{Cad19}.
\begin{lemma}
Let $\{\eps_j\}_j$ be a Rademacher sequence on a probability space $(\mathfrak{m},P)$. Suppose that $f=\{f_j\}_j$ is a finite sequence in $L_{1,\infty}(\A)$. Then we have
\Bes
\Big\|\sum_{j\in\Z}f_j\eps_j\Big\|_{L_{1,\infty}(L_{\infty}(\mathfrak{m})\overline{\otimes}\A)}
\approx\|\{f_j\}_j\|_{L_{1,\infty}(\A;\ell_2^{rc})}.
\Ees
\end{lemma}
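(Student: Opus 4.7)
The plan is to prove the two inequalities separately, following Cadilhac's framework for the weak-type Khintchine inequality at the endpoint $p=1$.

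First I would establish the easier direction
\[
\Bigl\|\sum_{j\in\Z} f_j \eps_j\Bigr\|_{L_{1,\infty}(L_{\infty}(\mathfrak{m})\ot\A)} \lesssim \|\{f_j\}_j\|_{L_{1,\infty}(\A;\ell_2^{rc})}.
\]
Given any admissible splitting $f_j = g_j + h_j$, the quasi-triangle inequality in $L_{1,\infty}$ reduces this to the column estimate
\[
\Bigl\|\sum_j g_j\eps_j\Bigr\|_{L_{1,\infty}(L_\infty(\mathfrak m)\ot \A)} \lesssim \|(g_j)\|_{L_{1,\infty}(\A;\ell_2^c)}
\]
and its symmetric row analogue. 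For the column bound, given $\lambda>0$ I would put $a=(\sum_j|g_j|^2)^{1/2}$ and take the spectral projection $e=\chi_{[0,\lambda]}(a)\in\A_\pi$, so that $\lambda\varphi(1-e)\lesssim\|(g_j)\|_{L_{1,\infty}(\A;\ell_2^c)}$ while $\sum_j|g_j e|^2 = e a^2 e\leq \lambda^2 e$. The orthonormality of $\{\eps_j\}$ in $L_2(\mathfrak{m})$ then yields $\|\sum_j g_j e\,\eps_j\|_{L_2(L_\infty(\mathfrak m)\ot\A)}^2 = \varphi(ea^2e) \leq \lambda^2 \varphi(e)$, and a further Cuculescu-type cut-off on $L_\infty(\mathfrak m)\ot\A$ converts this $L_2$ bound into the desired weak-$L_1$ estimate. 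The row case follows by taking adjoints.

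For the reverse direction, which is the main difficulty, I would implement a stop-time construction on the filtration of partial sums $S_n = \sum_{j\leq n} f_j \eps_j$ inside $L_\infty(\mathfrak m)\ot\A$. Setting $\lambda\approx\|\sum_j f_j\eps_j\|_{L_{1,\infty}}$, the Cuculescu projection $q$ attached to $(S_n)$ at level $\lambda$ satisfies $\lambda\,(\varphi\otimes{\rm Leb})(1-q)\lesssim\lambda$ and $qS_nq$ is uniformly bounded. The candidate splitting is, roughly, $g_j = qf_j$ and $h_j = (1-q)f_j$; one then verifies that the column square function of $(g_j)$ and the row square function of $(h_j)$ each lie in $L_{1,\infty}(\A)$ with norm $\lesssim \lambda$ by expanding $\sum_j |g_j|^2 = \sum_j f_j^* q f_j$ and integrating over $\eps$ to identify this with $\mathbb{E}[|qS_\infty|^2]$, which is controlled by $\lambda^2 q$.

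The main obstacle is the reverse direction: since $L_{1,\infty}$ is only quasi-normed, one cannot dualize or interpolate directly as in the $L_p$-range $1<p<2$. Moreover the stop-time projection must be arranged so that the resulting splitting of the individual $f_j$ depends only on the $\A$-valued data and not on the Rademacher variables themselves, which forces a careful averaging and conditional-expectation argument over $\mathfrak{m}$. Tracking the interaction between the measure-preserving structure on $\mathfrak m$ and the noncommutative cut-off is the delicate technical step of the proof, and is where Cadilhac's argument genuinely exploits that $L_{1,\infty}$ can be characterized via Kalton--Sukochev--type estimates on decreasing rearrangements even though it lacks Banach duality.
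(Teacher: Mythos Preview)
The paper does not give its own proof of this lemma: it is quoted as a black box and attributed to Cadilhac \cite{Cad19} (``the following noncommutative Khintchine's inequality in $L_{1,\infty}(\A,\ell_2^{rc})$ which was recently established by L.~Cadilhac''). So there is no in-paper argument to compare your proposal against; what you have written is an attempt to reconstruct Cadilhac's proof rather than something the present paper does.

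That said, a couple of points in your sketch deserve tightening. In the ``easy'' direction, after cutting with $e=\chi_{[0,\lambda]}(a)$ you only know the \emph{averaged} bound $\int_{\mathfrak m}\bigl|\sum_j g_je\,\eps_j(\omega)\bigr|^2 dP(\omega)=\sum_j e|g_j|^2e\le\lambda^2 e$; for a fixed $\omega$ the cross terms $\eps_j(\omega)\eps_k(\omega)eg_j^*g_ke$ do not cancel, so $\bigl|\sum_j g_je\,\eps_j(\omega)\bigr|$ is not pointwise $\le\lambda$. You therefore need a genuine additional step to pass from this $L_2$-in-$\omega$ control to a weak-$L_1$ bound on $L_\infty(\mathfrak m)\ot\A$, and invoking ``a further Cuculescu-type cut-off'' does not by itself do the job, since Cuculescu requires a positive increasing sequence or a martingale structure that you have not exhibited here. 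In the reverse direction you correctly identify the real obstacle---the splitting $g_j,h_j$ must live in $\A$, not in $L_\infty(\mathfrak m)\ot\A$, whereas any stop-time on the partial sums $S_n$ produces $\omega$-dependent projections---but your description of how to resolve this (``a careful averaging and conditional-expectation argument'') is a placeholder rather than an argument. These are precisely the points where Cadilhac's paper does nontrivial work, so if you want a self-contained proof you will need to fill them in from \cite{Cad19}.
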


Now by the preceding lemma, Theorem \ref{t:19sq} immediately follows from the following result.

\begin{theorem}\label{p:19m}
Suppose that $\Om$ satisfies \eqref{e:19Hom}, $\Om\in L(\log^+ L)^{2}(\S^{d-1})$  and the cancelation property $\int_{\S^{d-1}}\Om(\tet)d\si(\tet)=0$.
Let $T_j$ be defined in \eqref{e:19Tjn}. Assume $\{\eps_j\}_j$ is the Rademacher sequence on a probability space $(\mathfrak{m},P)$.
Define $Tf(x,z)=\sum_jT_jf(x)\eps_j(z)$ and set the tensor trace $\tilde{\vphi}=\int_{\mathfrak{m}}\otimes\vphi$. Then $T$ maps $L_1(\A)$ to $L_{1,\infty}(L_{\infty}(\mathfrak{m})\overline{\otimes}\A)$, i.e. for any $\lam>0, f\in L_1(\A)$,
\Bes
\lam\tilde{\vphi}\{|T f|>\lam\}\lc\C_\Om\|f\|_{L_{1}(\A)},
\Ees
where the constant $\C_\Om$ only depends on $\Om$ and the dimension.
\end{theorem}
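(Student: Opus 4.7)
The plan is to follow the roadmap announced in the introduction. Given $f \in L_1^+(\A)$ and $\lam > 0$, I would first apply Parcet's noncommutative Calder\'on--Zygmund decomposition relative to the dyadic filtration $\{\A_k\}$, producing a good part $g$ with $\|g\|_{L_2(\A)}^2 \lc \lam \|f\|_{L_1(\A)}$ and a bad part $b=\sum_k b_k$ that has the usual off-diagonal structure and is supported in the level sets of $f_k$. I would then introduce the absorbing projections $q_k$ and $q=\bigwedge_k q_k$ from Lemma \ref{l:19cucu}, discard the enlarged level set at a cost of $\lam^{-1}\|f\|_{L_1(\A)}$ in $\vphi(1_\A - q)$, and reduce Theorem \ref{p:19m} to $L_2$-type bounds on $q T g q$ and $q T b q$ (so that Chebyshev, combined with the trivial Rademacher estimate, yields the weak-type bound for $Tf = \sum_j T_j f \cdot \eps_j$).

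For the bad part the essential device is a microlocal decomposition $T_j = \sum_\nu T_j^\nu$, where the Fourier side of $T_j^\nu$ is localized to a spherical cap of aperture $\sim 2^{-j/2}$ about a direction $\tet_\nu$ on $\S^{d-1}$. This is the classical Duoandikoetxea--Rubio de Francia / Seeger splitting: it trades the irregular weight $\Om$ for a family of microlocal operators whose symbols can be handled by stationary phase. I would then prove the square-function $L_2$ estimate (Lemma \ref{l:19L^2}) via vector-valued Plancherel on the Rademacher variable and the angular near-orthogonality of $\{T_j^\nu\}_\nu$, routing every pointwise bound back to $q_k f_k q_k \lc \lam q_k$, which is the only pointwise inequality available in the noncommutative setting. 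In parallel, the kernel $L_1$ estimate (Lemma \ref{l:19L^1}) will follow from stationary-phase decay of the kernels of $T_j^\nu$ off the supporting cubes. A geometric interpolation of these two bounds in the microlocal parameter $\nu$, combined with summation over the dyadic scales $j$, produces the loss $\|\Om\|_{L(\log^+\!L)^2}$, matching the hypothesis.

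For the good part, Parcet's smooth-kernel treatment uses a Cotlar/pseudo-localisation estimate that relies on Lipschitz regularity of the kernel, and that route is closed here. Instead I would prove a pseudo-localisation estimate directly at the microlocal level: for $h$ supported off a dilate of the bad cubes, $\|T_j^\nu h\|_{L_2(\A)}$ picks up a decay factor in the distance to the support, again coming from stationary-phase bounds on the kernels of $T_j^\nu$. Summing over $j$ and $\nu$ and combining with $\|g\|_{L_\infty(\A)} \lc \lam$ and the $L_2$ size bound above yields the desired estimate on $q T g q$.

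The principal obstacle, as highlighted in the introduction, is the bad-function $L_2$ estimate. In the commutative case of Christ--Rubio de Francia one exploits the pointwise average bound $|Q|^{-1}\int_Q |b_Q(y)|\, dy \lc \lam$ inside a $TT^*$ argument, but this inequality fails for the off-diagonal terms in the noncommutative setting because operators associated with different cubes need not commute. Every expansion of type $(T_j^\nu)^* T_k^\mu$ must therefore be manipulated so that the surviving factors land on the single available quantity $q_k f_k q_k \lc \lam q_k$, which forces the martingale filtration into the frequency decomposition throughout the argument. Controlling the resulting cross terms cleanly, and synchronizing the angular scale $2^{-j/2}$ with the $L(\log^+\!L)^2$ norm of $\Om$ through the correct geometric interpolation of the $L_1$ and $L_2$ microlocal bounds, is where the bulk of the technical work will lie.
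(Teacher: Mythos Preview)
Your overall strategy---noncommutative Calder\'on--Zygmund decomposition followed by a microlocal angular decomposition applied to both the bad part and the off-diagonal good part, with the microlocal pieces controlled by a mix of $L_2$ and $L_1$ bounds---is exactly the paper's. Two concrete points in your proposal would not work as stated, however.

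The angular aperture is not $2^{-j/2}$ tied to the kernel scale. In the paper the caps have aperture $2^{-(n+s)\gamma}$, where $n$ is the gap between the kernel scale $2^j$ and the bad-cube scale $2^{j-n}$, and $s\geq 0$ is the off-diagonal shift in the refined decomposition $b=\sum_{s\geq 0}\sum_k b_{k,s}$ with $b_{k,s}=p_k(f-f_{k+s})p_{k+s}+p_{k+s}(f-f_{k+s})p_k$ (a parameter your outline omits entirely). The microlocal gains---$2^{-(n+s)\gamma}$ from angular near-orthogonality in $L_2$ and $2^{-(n+s)\alpha}$ from stationary phase in $L_1$---must be in $n+s$ so that the double sum $\sum_{n\geq 100}\sum_{s\geq 0}$ converges; a $j$-dependent aperture does not see the gap and the sums would not close. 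Relatedly, the paper does not interpolate the $L_2$ and $L_1$ bounds in $\nu$: it splits each cap operator via a frequency multiplier $G_{n+s,v}$ into a near-tangential piece (treated in $L_2$) and a transversal piece (treated in $L_1$), after first excising the set $\{|\Omega(\theta)|\geq 2^{\iota(n+s)}\|\Omega\|_1\}$ and working under $\|\Omega\|_\infty\leq 2^{\iota(n+s)}\|\Omega\|_1$. That excision step is where the $L(\log^+L)^2$ norm is actually spent.

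The good function cannot be handled by $\|g\|_{L_2}^2\lesssim\lambda\|f\|_1$ together with $\|g\|_{L_\infty}\lesssim\lambda$. Those bounds hold only for the diagonal $g_d=qfq+\sum_k p_kf_kp_k$, which is indeed dispatched directly by the $L_2$ boundedness of $T$. The off-diagonal $g_{\text{off}}=\sum_{s\geq 1}g_{(s)}$, with $g_{(s)}=\sum_k(p_k\,df_{k+s}\,q_{k+s-1}+q_{k+s-1}\,df_{k+s}\,p_k)$, satisfies only $\sup_s\|g_{(s)}\|_{L_2(\mathcal{A})}^2\lesssim\lambda\|f\|_{L_1(\mathcal{A})}$, with no summability in $s$. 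One must rerun the entire microlocal argument on $T_j g_{n-j,s}$, using the cancellation $\int_Q g_{k,s}=0$ for $Q\in\mathcal{Q}_{k+s-1}$, to extract decay in $n+s$ and make $\sum_{s\geq 1}\sum_{n\geq 100}$ converge. Your pseudo-localisation picture is the right explanation for why the projection $\zeta$ truncates to $n\geq 100$, but the decay itself comes from the same cap decomposition as for the bad part, not from any residual smoothness of the kernel.
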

At present our main result Theorem \ref{t:19} is reduced to Theorem \ref{p:19m}. In the rest of this paper, we give effort to the proof of Theorem \ref{p:19m}.
\vskip0.24cm

\section {Proof of Theorem \ref{p:19m}}\label{s:194}
\vskip0.24cm
In this section we give the proof of Theorem \ref{p:19m} based on some lemmas, their proofs will be given in Section \ref{s:195} and Section \ref{s:196}. We first introduce the noncommutative Calder\'on-Zygmund decomposition.

\subsection{Noncommutative Calder\'on-Zygmund decomposition}\label{s:1941}\quad
\vskip0.24cm

By the standard density argument, we only need to consider the following dense class of $L_1(\A)$
$$
\A_{c,+}=\{f:\R^d\rta\M\ | f\in\A_+,\ \text{$\overrightarrow{\rm {supp}}\,f$ is compact}\}.
$$
Here $\overrightarrow{\rm {supp}}\,f$ represents the support of $f$ as an operator-valued function in $\R^d$, which means that $\overrightarrow{\rm {supp}}\ f=\{x\in\R^d: \|f(x)\|_{\M}\neq0\}$. Let $\Om\in L(\log^+L)^2(\S^{d-1})$. Set a constant
\Be\label{e:19constantom}
\mathcal{C}_\Om=\|\Om\|_{L(\log^+L)^2}
+\int_{\S^{d-1}}|\Om(\tet)|\big(1+[\log^+({|\Om(\tet)|}/{\|\Om\|_{1}})]^2\big)d\si(\tet),
\Ee
where $\log^+a=0$ if $0<a<1$ and $\log^+a=\log a$ if $a\geq1$. Since $\|\Om\|_{L(\log^+L)^2}<+\infty$, one can easily check that $\mathcal{C}_\Om$ is a finite constant.
Now we fix $f\in \A_{c,+}$, set $f_k=\mathsf{E}_kf$ for all $k\in\Z$. Then the sequence $\{f_k\}_{k\in\Z}$ is a positive dyadic martingale in $L_1(\A)$. Applying the so-called Cuculescu construction introduced in \cite[Lemma 3.1]{Par09} at level ${\lam}{\C_\Om}^{-1}$, we get the following result.
\begin{lemma}\label{l:19cucu}
There exists a decreasing sequence $\{q_k\}_{k\in\Z}$ depending on $f$ and ${\lam}{\C_\Om}^{-1}$,
where $q_k$ is a projection in $\A_\pi$ satisfying the following conditions:
\begin{enumerate}[\rm (i).]
\item $q_k$ commutes with $q_{k-1}f_kq_{k-1}$ for every $k\in\Z$;
\item $q_k$ belongs to $\A_k$ for every $k\in\Z$ and $q_kf_kq_k\leq{\lam}{\C_\Om}^{-1}q_k$;
\item Set $q=\bigwedge_{k\in\Z}q_k$. We have the following inequality
\Bes
\vphi(1_\A-q)\leq\lam^{-1}\C_\Om\|f\|_{L_1(\A)};
\Ees
\item The expression of $q_k$ can be written  as follows: for some negative integer $m\in\Z$
\Bes
q_k=\begin{cases}
1_\A\ \ &\qquad\text{if\ $k<m$,} \\
\chi_{(0,\lam\C_\Om^{-1}]}(f_k)\ &\qquad\text{if\ $k=m$},\\
\chi_{(0,\lam\C_\Om^{-1}]}(q_{k-1}f_kq_{k-1}) &\qquad\text{if\ $k>m$}.
\end{cases}
\Ees
\end{enumerate}
\end{lemma}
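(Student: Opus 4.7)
The plan is to invoke the Cuculescu construction of \cite[Lemma~3.1]{Par09} applied to the positive dyadic martingale $\{f_k\}_{k\in\Z}$ at level $\lambda/\C_\Om$ rather than $\lambda$; the entire content of the lemma is the verification that this construction is well-posed for $f\in\A_{c,+}$ and that the rescaled level yields the rescaled weak-type bound in (iii). All four properties will follow from this adaptation, so the proof reduces to recording the construction and checking the measure estimate.

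The first step is to locate the bottom index $m$. Since $f\in\A_{c,+}$ is essentially bounded and has compact operator-valued support, say in a cube $Q_0$, one has $\|f_k\|_\A\leq(|Q_0|/|Q|)\|f\|_\A$ for any dyadic cube $Q\in\Q_k$ that contains $Q_0$, and $f_k$ vanishes on cubes disjoint from $Q_0$. Hence $\|f_k\|_\A\to 0$ as $k\to-\infty$, and there exists a negative integer $m$ with $\|f_k\|_\A<\lambda\C_\Om^{-1}$ for all $k<m$. This justifies setting $q_k=1_\A$ for $k<m$, which makes the first branch of (iv) consistent with (ii).

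The second step is the inductive definition in (iv) together with the verification of (i) and (ii). For $k\geq m$, assume $q_{k-1}$ is a projection in $\A_{k-1}\subset\A_k$. Then $q_{k-1}f_kq_{k-1}$ is a positive operator in $\A_k$, and $q_k:=\chi_{(0,\lambda\C_\Om^{-1}]}(q_{k-1}f_kq_{k-1})$ is a spectral projection of this operator, so it automatically commutes with $q_{k-1}f_kq_{k-1}$, lies in $\A_k$, and satisfies $q_k\leq q_{k-1}$ (the latter because the spectral projection is supported inside $\mathrm{range}(q_{k-1})$, using the $k=m$ case as a base step). Monotonicity of spectral calculus gives $q_{k-1}f_kq_{k-1}q_k\leq\lambda\C_\Om^{-1}q_k$, and multiplying by $q_k$ on the left together with $q_kq_{k-1}=q_k$ yields $q_kf_kq_k\leq\lambda\C_\Om^{-1}q_k$, which is (ii).

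The third step, which I expect to be the main technical point, is (iii). By the telescoping identity $1_\A-q=\sum_{k\geq m}(q_{k-1}-q_k)$ (with $q_{m-1}:=1_\A$) and orthogonality of the jumps $p_k:=q_{k-1}-q_k$, one has $\varphi(1_\A-q)=\sum_{k\geq m}\varphi(p_k)$. Because $p_k$ is the spectral projection of $q_{k-1}f_kq_{k-1}$ on $(\lambda\C_\Om^{-1},\infty)$, Chebyshev's inequality in the tracial sense gives $\lambda\C_\Om^{-1}\varphi(p_k)\leq\varphi(p_kq_{k-1}f_kq_{k-1}p_k)=\varphi(p_kf_k)$, using $p_kq_{k-1}=p_k$. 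Using the martingale property $\mathsf{E}_{k-1}(f)=f_{k-1}$ together with $p_k\in\A_k$ is not what is needed here; instead, the standard argument sums $\varphi(p_kf_k)$ and observes that $\sum_k\varphi(p_kf_k)\leq\varphi((1_\A-q)\lim_k f_k)$ bounded by $\|f\|_{L_1(\A)}$ through a routine domination (written out carefully in \cite[Lemma~3.1]{Par09}). Multiplying through by $\C_\Om/\lambda$ produces the stated bound $\varphi(1_\A-q)\leq\lambda^{-1}\C_\Om\|f\|_{L_1(\A)}$. The only departure from Parcet's argument is the scalar substitution $\lambda\mapsto\lambda\C_\Om^{-1}$, and since $\C_\Om<\infty$ under the hypothesis $\Om\in L(\log^+L)^2(\S^{d-1})$, this rescaling is harmless.
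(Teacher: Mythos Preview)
Your proposal is correct and matches the paper's approach exactly: the paper does not give an independent proof of this lemma but simply states that it follows by ``applying the so-called Cuculescu construction introduced in \cite[Lemma~3.1]{Par09} at level $\lambda\C_\Om^{-1}$,'' which is precisely what you do, with the added benefit that you spell out the choice of $m$ via compact support and the Chebyshev step for (iii). One minor remark: in your third step the trace identity $\varphi(p_kf_k)=\varphi(p_kf)$ follows directly from $p_k\in\A_k$ and the trace-preserving property of $\mathsf{E}_k$, so the sentence dismissing ``$p_k\in\A_k$'' as ``not what is needed'' is slightly misleading---that is exactly what is used to pass from $\sum_k\varphi(p_kf_k)$ to $\varphi((1_\A-q)f)\le\|f\|_{L_1(\A)}$.
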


Below we introduce another expression of the projection $q_k$ given in the previous lemma as done in \cite{Par09}. We point out that such kind of expression will be quite helpful when we give some estimates to the terms related to $q_k$. In fact we can write
$
q_k=\sum_{Q\in\mathcal{Q}_k}\xi_Q\chi_{Q}
$
for all $k\in\Z$, where $\xi_Q$ is a projection in $\M$ which satisfies the following conditions:
\begin{enumerate}[\rm (i).]
\item $\xi_Q$ has the following explicit expression: $\widehat{Q}$ below is the father dyadic cube of $Q$,
\Bes
\xi_Q=\begin{cases}
1_\M\ \ &\qquad\text{if $k<m$,} \\
\chi_{(0,\lam\C_\Om^{-1}]}(f_Q)\ &\qquad\text{if $k=m$},\\
\chi_{(0,\lam\C_\Om^{-1}]}(\xi_{\widehat{Q}}f_Q\xi_{\widehat{Q}}) &\qquad\text{if $k>m$}.
\end{cases}
\Ees
\item $\xi_Q\in\M_{\pi}$ and $\xi_Q\leq\xi_{\widehat{Q}}$;
\item $\xi_Q$ commutes with $\xi_{\widehat{Q}}f_Q\xi_{\widehat{Q}}$ and  $\xi_Qf_Q\xi_Q\leq \C^{-1}_\Om\lam\xi_Q$.
\end{enumerate}

Define the projection $p_k=q_{k-1}-q_k$. By applying the above more explicit expression, we see that $p_k$ equals to
$
\sum_{Q\in\Q_k}(\xi_{\widehat{Q}}-\xi_Q)\chi_Q=:\sum_{Q\in\Q_k}{\pi_Q\chi_Q}
$
where $\pi_Q=\xi_{\widehat{Q}}-\xi_Q$.
Then it is easy to see that all $p_k$s are pairwise disjoint and
$\sum_{k\in\Z}p_k=1_\A-q$.

Now we define the associated good functions and bad functions related to $f$ as follows:
\Bes
f=g+b,\quad g=\sum_{i,j\in\widehat{\Z}}p_if_{i\vee j}p_j,\quad b=\sum_{i,j\in\widehat{\Z}}p_i(f-f_{i\vee j})p_j
\Ees
where we set $p_\infty=q$,  $\widehat{\Z}=\Z\cup\{\infty\}$  and $i\vee j=\max (i,j)$. If $i$ or $j$ is infinite, $i\vee j$ is just $\infty$ and $f_\infty=f$ by definition. We further decompose $g$  as the diagonal terms and the off-diagonal terms
\Bes
g_d=qfq+\sum_{k\in\Z}p_kf_kp_k,\quad g_{\text{off}}=\sum_{i\neq j}p_if_{i\vee j}p_j+qf(1_\A-q)+(1_\A-q)fq.
\Ees
The proofs for diagonal terms $g_d$ and off-diagonal terms $g_\off$ will be different as we shall see below. For the bad function $b$, we can deal with the diagonal and off-diagonal terms uniformly. So it is unnecessary for us to  decompose it as that for the good functions.
By the linearity of  $T$, we get that
\Bes
\tilde{\vphi}(|Tf|>\lam)\leq\tilde{\vphi}(|Tg|>\lam/2)+\tilde{\vphi}(|Tb|>\lam/2).
\Ees

In the following we give estimates for the good and bad functions, respectively. Before that we state a lemma to construct a projection in $\A$ such that the proof can be reduced to the case that the operators are restricted on this projection.
\begin{lemma}\label{l:19excep}
There exists a projection $\zet\in\A_\pi$ which satisfies the following conditions
\begin{enumerate}[\rm (i).]
\item $\lam\vphi(1_\A-\zet)\lc\C_\Om\|f\|_{L_1(\A)}$.
\item If $Q_0\in\Q$ and $x\in (2^{101}+1)Q_0$, then $\zet(x)\leq1_\M-\xi_{\widehat{Q_0}}+\xi_{Q_0}$ and $\zet(x)\leq\xi_{Q_0}$.
\end{enumerate}
\end{lemma}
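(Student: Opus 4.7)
The plan is to build $\zeta$ by subtracting from $1_\A$ an exceptional projection attached to each Cuculescu jump $\pi_Q := \xi_{\widehat{Q}} - \xi_Q$, supported on the dilate $(2^{101}+1)Q$, in the spirit of Parcet's construction from \cite{Par09}. Since $\pi_Q \in \M_\pi$ and $\chi_{(2^{101}+1)Q}$ is scalar valued in $\{0,1\}$, the product $\pi_Q\,\chi_{(2^{101}+1)Q}$ is a projection in $\A_\pi$, so one sets
\[
\zeta \;:=\; 1_\A - \bigvee_{k \in \Z}\;\bigvee_{Q \in \Q_k}\pi_Q\,\chi_{(2^{101}+1)Q}.
\]

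For (i), I would apply subadditivity of $\vphi$ on joins of projections to obtain the bound
\[
\vphi(1_\A - \zeta) \;\leq\; \sum_{k \in \Z}\sum_{Q \in \Q_k} \tau(\pi_Q)\,|(2^{101}+1)Q|.
\]
Using pairwise disjointness of the cubes in $\Q_k$ together with $|(2^{101}+1)Q| = (2^{101}+1)^d|Q|$, the inner sum would collapse to $(2^{101}+1)^d\,\vphi(p_k)$. Summing in $k$ and invoking $\sum_k p_k = 1_\A - q$ together with Lemma~\ref{l:19cucu}(iii) would then give $\lambda\,\vphi(1_\A - \zeta) \lc \C_\Om \|f\|_{L_1(\A)}$.

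For (ii), fixing $Q_0 \in \Q$ and $x \in (2^{101}+1)Q_0$, the first inequality $\zeta(x) \leq 1_\M - \xi_{\widehat{Q_0}} + \xi_{Q_0}$ is immediate from $\zeta(x) \leq 1_\M - \pi_{Q_0}$. For the second inequality I would introduce the dyadic ancestor chain $Q_0^{(0)}=Q_0$, $Q_0^{(j+1)}=\widehat{Q_0^{(j)}}$, verify the geometric containment $(2^{101}+1)Q_0 \subseteq (2^{101}+1)\widehat{Q_0}$ by a one-line $\ell^\infty$ comparison of centers and side-lengths (the dilation factor absorbs both the parent doubling and the one-step center shift), and iterate to $(2^{101}+1)Q_0 \subseteq (2^{101}+1)Q_0^{(j)}$ for all $j \geq 0$. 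Applying the first inequality at every ancestor would then deliver the chain of identities $\zeta(x)\,\xi_{Q_0^{(j+1)}} = \zeta(x)\,\xi_{Q_0^{(j)}}$, and Lemma~\ref{l:19cucu}(iv) furnishes an integer $j_{\max}$ with $\xi_{Q_0^{(j_{\max})}} = 1_\M$; telescoping down the tower from $j_{\max}$ to $0$ then yields $\zeta(x) = \zeta(x)\,\xi_{Q_0}$, i.e.\ $\zeta(x) \leq \xi_{Q_0}$.

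The main obstacle will be the second inequality in (ii): the Cuculescu projections $\xi_Q$ at a common scale are genuinely non-commuting, so there is no direct order relation between $\zeta(x)$ and a specific $\xi_{Q_0}$ when $x \in (2^{101}+1)Q_0 \setminus Q_0$. The trick making the plan work is that the dilation factor $2^{101}+1$ is small enough relative to parent-child doubling to force $x$ into every dilated ancestor $(2^{101}+1)Q_0^{(j)}$; the orthogonality $\zeta(x)\,\pi_{Q_0^{(j)}} = 0$ built into the construction can then be chained all the way up to a scale where $\xi$ is trivially $1_\M$ and telescoped back down to $\xi_{Q_0}$.
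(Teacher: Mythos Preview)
Your proposal is correct and is precisely the construction the paper has in mind: the paper does not give its own proof but simply refers to \cite[Lemma 4.2]{Par09}, and what you wrote is exactly Parcet's argument adapted to the dilation factor $2^{101}+1$. The measure estimate via subadditivity of $\vphi$ on the join, and the telescoping along the ancestor chain using $q_k=1_\A$ for $k<m$ from Lemma~\ref{l:19cucu}(iv), are both the standard steps, so there is nothing to add.
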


The proof of this lemma can be easily modified from that of \cite[Lemma 4.2]{Par09}. Here the exact value of  $2^{101}+1$ above is not essential and the reason we choose this value is just for convenience in later calculation (see \eqref{e:19exec} later). Now let us  consider the bad functions first since our method presented here is also needed for the good functions.
\vskip0.24cm
\subsection{Estimates for the bad functions}\label{s:1942}\quad
\vskip0.24cm

We first use Lemma \ref{l:19excep} to reduce the study of the operator $T$ to that of $\zet T\zet$. Split $Tb$ into four terms as follows
\Bes
(1_\A-\zeta)Tb(1_\A-\zet)+\zet Tb(1-\zet)+(1-\zet)Tb\zet+\zet Tb\zet.
\Ees
By the property (i) in Lemma \ref{l:19excep}, we get that
\Bes
\tilde{\vphi}(|Tb|>\lam/2)\lc\vphi(1_\A-\zet)+\tilde{\vphi}(|\zet Tb\zet|>\lam/4)
\lc\lam^{-1}\C_\Om\|f\|_{L_1(\A)}+\tilde{\vphi}(|\zet Tb\zet|>\lam/4).
\Ees
Therefore it is enough to show that the term $\tilde{\vphi}(|\zet Tb\zet|>\lam/4)$ satisfies our desired estimate. Recall the bad function
\Bes
b=\sum_{k\in\Z}p_k(f-f_k)p_k+\sum_{s\geq1}\sum_{k\in\Z}p_k(f-f_{k+s})p_{k+s}+p_{k+s}(f-f_{k+s})p_k
=:\sum_{s=0}\sum_{k\in\Z}b_{k,s},
\Ees
where
\Be\label{e:19db}
b_{k,0}=p_k(f-f_k)p_k,\quad b_{k,s}=p_k(f-f_{k+s})p_{k+s}+p_{k+s}(f-f_{k+s})p_k.
\Ee
By the definition of $T$, we further rewrite $Tb$ as follows: for any $x\in\R^d$ and $z\in\mathfrak{m}$,
\[Tb(x,z)=\sum_{j\in\mathbb{Z}}T_j[\sum_{s\geq0}\sum_{n\in\mathbb{Z}}b_{n-j,s}](x)
\eps_j(z)=\sum_{s\geq0}\sum_{n\in\mathbb{Z}}\sum_{j\in\mathbb{Z}}T_jb_{n-j,s}(x)\eps_j(z).\]

For any $Q\in\Q_{n-j+s}$, set $Q_{n-j}\in\Q_{n-j}$ as the $s$th ancestor of $Q$. Consider $x$ in the support of $\zet$ (i.e. $\zet(x)\neq0$) and let $n<100$, then we get that for all $s$, $\zet(x)T_jb_{n-j,s}(x)\zet(x)$ equals to
\Be\label{e:19exec}
\begin{split}
&\sum_{Q\in\Q_{n-j+s}}\zet(x)\int_Q K_j(x-y)b_{n-j,s}(y)dy\zet(x)\\
&=\sum_{Q\in\Q_{n-j+s}}\zet(x)\chi_{((2^{101}+1)Q_{n-j})^c}(x)\int_Q K_j(x-y)\\
&\quad\times\big[\pi_{Q_{n-j}}(f(y)-f_Q)\pi_Q
+\pi_Q(f(y)-f_Q)\pi_{Q_{n-j}}\big]dy\zet(x)\\
&=0
\end{split}
\Ee
where in the first equality we apply $\zet(x)\pi_{Q_{n-j}}=0$ if $x\in (2^{101}+1)Q_{n-j}$ by the property (ii) of $\zet$ in Lemma \ref{l:19excep} and the second inequality follows from the fact $x\in ((2^{101}+1)Q_{n-j})^c$ and $y\in Q$ implies that $|x-y|\geq2^{100+j-n}$ which is a contradiction with the support of $K_j$ and $n<100$.
Therefore we get
$$\zet Tb \zet=\zet\sum_{n\geq100}\sum_{s\geq0}\sum_{j\in
\mathbb{Z}}T_jb_{n-j,s}\eps_j\zet.
$$
Hence, to finish the proof related to the bad functions, it suffices to verify the following estimate:
\begin{equation}\label{e:19boff}
\tilde{\vphi}\big(\big|\zet\sum_{n\geq100}\sum_{s\geq0}\sum_{j\in
\mathbb{Z}}T_jb_{n-j,s}\eps_j\zet\big|>\lambda/4\big)\lc {\lambda}^{-1}\C_\Om\|f\|_{L_1(\A)}.
\end{equation}

Some important decompositions play key roles in the proof of \eqref{e:19boff}. We present them by some lemmas, which will be proved in Section \ref{s:195}. It should be pointed out here that the methods used here also work for the good functions, which will be clear in the next subsection.

The first lemma shows that,  \eqref{e:19boff} holds if $\Om$ is restricted in some subset of $\S^{d-1}$. More precisely, for fixed $n\ge100$ and $s\geq 0$, denote $D^\iota=\{\theta\in\S^{d-1}:\,|\Om(\theta)|\geq2^{\iota (n+s)}\|\Om\|_{1}\}$, where $\iota>0$ will be chosen later.
Let $T_{j,\iota}^{n,s}$ be defined by
\Be\label{e:19tjiota}
T_{j,\iota}^{n,s}h(x)=\int_{\R^d}\Om\chi_{D^{\iota}}\Big(\fr{x-y}{|x-y|}\Big)
K_j(x-y)\cdot h(y)dy.
\Ee

\begin{lemma}\label{l:19cur}
Suppose $\Om\in L(\log^+ L)^2(\S^{d-1})$. With all notation above, we get that
\Bes
\tilde{\vphi}\big(\big|\zet\sum_{n\geq100}\sum_{s\geq0}\sum_{j\in
\mathbb{Z}}T_{j,\iota}^{n,s}b_{n-j,s}\eps_j\zet\big|>\lambda/8\big)\lc {\lambda}^{-1}\C_\Om\|f\|_{L_1(\A)}.
\Ees
\end{lemma}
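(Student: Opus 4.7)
The plan is to use Chebyshev's inequality to reduce the weak-$(1,1)$ estimate to an $L_1$ bound on the triple sum, then exploit the smallness of $\|\Om\chi_{D^\iota}\|_{L_1(\S^{d-1})}$ for large $n+s$ together with a uniform control of $\sum_k\|b_{k,s}\|_{L_1(\A)}$. The subtle point is a Fubini--layer-cake argument in $(n,s)$ which uses the $L(\log^+L)^2$ assumption essentially, since the natural lattice count grows quadratically in $\log^+|\Om|$.

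\emph{Step 1.} By Markov's inequality applied to the positive trace $\tilde{\vphi}$, and the fact that $\zet$ is a projection (so $\|\zet X\zet\|_{L_1}\leq\|X\|_{L_1}$), the claim reduces to
$$\Big\|\sum_{n\geq 100}\sum_{s\geq 0}\sum_{j\in\Z}T_{j,\iota}^{n,s}(b_{n-j,s})\,\eps_j\Big\|_{L_1(L_\infty(\mathfrak{m})\ot\A)}\lc\iota^{-2}\C_\Om\|f\|_{L_1(\A)}.$$
The triangle inequality in $L_1$ combined with $\|g\eps_j\|_{L_1(L_\infty(\mathfrak{m})\ot\A)}=\|g\|_{L_1(\A)}$ reduces matters to bounding $\sum_{n,s,j}\|T_{j,\iota}^{n,s}b_{n-j,s}\|_{L_1(\A)}$.

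\emph{Step 2 (kernel $L_1$-bound and sum in $j$).} In polar coordinates the kernel $\phi_j(x)|x|^{-d}\Om\chi_{D^\iota}(x/|x|)$ of $T_{j,\iota}^{n,s}$ has total mass $c_0\|\Om\chi_{D^\iota}\|_{L_1(\S^{d-1})}$, uniformly in $j$, giving $\|T_{j,\iota}^{n,s}h\|_{L_1(\A)}\leq c_0\|\Om\chi_{D^\iota}\|_{L_1(\S^{d-1})}\|h\|_{L_1(\A)}$. After substituting $k=n-j$, the inner $j$-sum becomes $\sum_k\|b_{k,s}\|_{L_1(\A)}$. Applying $\|AB\|_{L_1}\leq\|A\|_{L_2}\|B\|_{L_2}$ to the factorisations $p_kfp_{k+s}=(p_kf^{1/2})(f^{1/2}p_{k+s})$ and $p_kf_{k+s}p_{k+s}=(p_kf_{k+s}^{1/2})(f_{k+s}^{1/2}p_{k+s})$, the identity $\vphi(p_kf_{k+s})=\vphi(p_kf)$ (which follows from $p_k\in\A_k\subset\A_{k+s}$ and trace-invariance of $\mathsf{E}_{k+s}$), and Cauchy-Schwarz in $\ell_2(\Z)$, one obtains the uniform-in-$s$ bound $\sum_k\|b_{k,s}\|_{L_1(\A)}\leq 4\|f\|_{L_1(\A)}$, using $\sum_k\vphi(p_kf)\leq\vphi(f)$ for positive $f$.

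\emph{Step 3 (double summation in $(n,s)$).} It remains to show $\sum_{n\geq 100}\sum_{s\geq 0}\|\Om\chi_{D^\iota}\|_{L_1(\S^{d-1})}\lc\iota^{-2}\C_\Om$. By Fubini and layer-cake,
$$\sum_{n\geq 100}\sum_{s\geq 0}\int_{D^\iota}|\Om|\,d\sigma=\int_{\S^{d-1}}|\Om(\theta)|\cdot\#\set{(n,s):n\geq 100,\,s\geq 0,\,n+s\leq M(\theta)}\,d\sigma(\theta),$$
where $M(\theta)=\log^+(|\Om(\theta)|/\|\Om\|_1)/(\iota\log 2)$. The lattice count is at most $\tfrac12 M(\theta)^2$, so the sum is $\leq C\iota^{-2}\int_{\S^{d-1}}|\Om|(\log^+(|\Om|/\|\Om\|_1))^2\,d\sigma\leq C\iota^{-2}\C_\Om$. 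This is the main obstacle: the two powers of $\log^+$ in $\C_\Om$ are consumed exactly by the quadratic growth of the lattice count, making $L(\log^+L)^2$ the natural threshold for this crude approach. Combining Steps 1--3 yields the result.
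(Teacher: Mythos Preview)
Your proof is correct and follows essentially the same approach as the paper: Chebyshev/Markov, the kernel $L_1$-bound $\|K_{j,\iota}^{n,s}\|_{L_1}\lc\int_{D^\iota}|\Om|\,d\sigma$, the uniform control $\sum_k\|b_{k,s}\|_{L_1(\A)}\lc\|f\|_{L_1(\A)}$, and the Fubini/lattice-count argument that produces the square of $\log^+(|\Om|/\|\Om\|_1)$. The only differences are cosmetic: the paper quotes the uniform $L_1$ bound on the bad functions from its Lemma~\ref{l:19bbasic} (referenced to \cite{Cad18,Par09}) rather than reproving it via the H\"older/Cauchy--Schwarz argument you give in Step~2, and it absorbs the $\iota^{-2}$ factor into implicit constants since $\iota$ is eventually fixed.
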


Thus, by Lemma \ref{l:19cur}, to finish the proof for bad functions, it suffices to verify (\ref{e:19boff}) under the condition that for fixed $n\geq100$ and $s\geq0$ the kernel function $\Omega$ satisfies $\|\Om\|_{\infty}\leq2^{\iota (n+s)}\|\Om\|_{1}$
in each $T_j$.

In the following, we introduce the
\emph{microlocal decomposition}\,of kernel. To do this, we give a partition of unity on the unit surface $\S^{d-1}$.
Let $k\geq100$. Choose $\{e^k_v\}_{v\in\Theta_k}$ be a collection of unit vectors on $\S^{d-1}$
which satisfies the following two conditions:

(a)\ $|e^k_v-e^k_{v'}|\geq 2^{-k\ga-4}$ if $v\neq v'$;

(b)\ If $\theta\in \S^{d-1}$, there exists an $e^k_v$ such that $|e^k_v-\theta|\leq 2^{-k\ga-3}$.

\noindent The constant $0<\ga<1$ in (a) and (b) will be chosen later. To choose such an $\{e_v^k\}_{v\in\Theta_k}$, we simply take a maximal collection $\{e^k_v\}_v$ for which (a) holds and then (b) holds automatically by the maximality. Notice that there are $C2^{k\ga(d-1)}$
elements in the collection $\{e^k_v\}_v$. For every $\tet\in\S^{d-1}$,
there only exists finite $e^k_v$ such that $|e^k_v-\tet|\leq2^{-k\ga-4}$. Now we can construct an associated partition of unity
on the unit surface $\S^{d-1}$. Let $\eta$ be a smooth, nonnegative, radial function with $\eta(u)=1$ for
$|u|\leq \fr{1}{2}$ and $\eta(u)=0$ for $|u|>1$. Define $$\tilde{\Ga}^k_v(u)=\eta\Big(2^{k\ga}(\fr{u}{|u|}-e^k_v)\Big),\quad
\Ga^k_v(u)=\tilde{\Ga}^k_v(u)\Big(\sum\limits_{v\in\Theta_k}\tilde{\Ga}^k_v(u)\Big)^{-1}. $$
Then it is easy to see that $\Ga^k_v$ is homogeneous of degree $0$ with $\sum\limits_{v\in\Theta_{k}}\Ga^k_v(u)=1$, for all $u\neq0$ and all $k$.
Now we define operator $T_j^{n,s,v}$ by
\Be\label{e:19Tjnsv}
T_j^{n,s,v}h(x)=\int_{\R^d}\Om(x-y)\Ga^{n+s}_v(x-y)\cdot K_j(x-y)\cdot h(y)dy.
\Ee
Then it is easy to see that $T_j=\sum\limits_{v\in\Theta_{n+s}}T_j^{n,s,v}.$

In the sequel, we will use the Fourier transform since we need to separate the phase in frequency space into different directions. Hence we define a Fourier multiplier operator by
$$\widehat{G_{k,v}h}(\xi)=\Phi(2^{k\ga}\inn{e^k_v}{{\xi}/{|\xi|}})\hat{h}(\xi),$$
where $\hat{h}$ is the Fourier transform of $h$ and $\Phi$ is a smooth, nonnegative, radial function such that
$0\leq\Phi(x)\leq1$ and $\Phi(x)=1$ on $|x|\leq2$, $\Phi(x)=0$ on $|x|>4$.
Now we can split $T_j^{n,s,v}$ into two parts:
$
T_j^{n,s,v}=G_{n+s,v}T_j^{n,s,v}+(I-G_{n+s,v})T_j^{n,s,v}.
$

The following lemma gives the $L^2$ estimate involving $G_{n+s,v}T_j^{n,s,v}$, which will be proved in Section \ref{s:195}.

\begin{lemma}{\label{l:19L^2}}
Let $n\geq100$ and $s\geq0$. Suppose $\|\Om\|_{\infty}\leq2^{\iota (n+s)}\|\Om\|_{1}$ in each $T_j$. With all notation above,  we get the following estimate
$$\sum\limits_j\Big\|\sum\limits_{v\in\Theta_{n+s}}G_{n+s,v}T_j^{n,s,v}b_{n-j,s}\Big\|^2_{L_2(\A)}
\lc2^{-(n+s)\ga+2(n+s)\iota}\lam\C_\Om\|f\|_{L_1(\A)}.$$
\end{lemma}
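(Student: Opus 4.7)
The plan is to bound the left-hand side via three ingredients: (i) almost-orthogonality in the direction index $v$; (ii) a pointwise estimate on the Fourier multiplier symbol of $G_{n+s,v}T_j^{n,s,v}$, yielding the microlocal decay factor $2^{-(n+s)\ga}$ together with the loss $2^{2(n+s)\iota}$ from the hypothesis on $\|\Om\|_\infty$; and (iii) a global $L_2$-control on the bad-function pieces $b_{n-j,s}$ via the Cuculescu pointwise inequality.

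For step (i), each frequency symbol $\Phi(2^{(n+s)\ga}\inn{e^{n+s}_v}{\xi/|\xi|})$ is supported in a cone of angular aperture $\sim 2^{-(n+s)\ga}$ around $\pm e^{n+s}_v$, so each $\xi\in\R^d$ meets only $O(1)$ such cones. Vector-valued Plancherel on $L_2(\A)\cong L_2(\R^d;L_2(\M))$ therefore yields
\[\Big\|\sum_{v\in\Th_{n+s}}G_{n+s,v}T_j^{n,s,v}b_{n-j,s}\Big\|_{L_2(\A)}^2 \lc \sum_{v\in\Th_{n+s}}\bigl\|G_{n+s,v}T_j^{n,s,v}b_{n-j,s}\bigr\|_{L_2(\A)}^2.\]

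For step (ii), let $m_j^{n,s,v}(\xi)$ denote the Fourier transform of $K_j(x)\Om(x)\Ga^{n+s}_v(x)$, so that $G_{n+s,v}T_j^{n,s,v}$ is the Fourier multiplier with symbol $\Phi(2^{(n+s)\ga}\inn{e^{n+s}_v}{\xi/|\xi|})\,m_j^{n,s,v}(\xi)$. I will combine two bounds for $m_j^{n,s,v}(\xi)$: the trivial absolute bound $|m_j^{n,s,v}(\xi)|\lc\|\Om\chi_{\text{cap}_v}\|_1$, where $\text{cap}_v$ is the spherical cap of angular radius $\sim 2^{-(n+s)\ga}$ around $e^{n+s}_v$; and an oscillatory decay bound produced by integration-by-parts in the spherical-tangent direction to $e^{n+s}_v$, exploiting that $\xi$ is near-aligned with $\pm e^{n+s}_v$ on the support of $\Phi$. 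Combining these bounds, invoking the hypothesis $\|\Om\|_\infty\leq 2^{(n+s)\iota}\|\Om\|_1$ at those points where an $L^\infty$-estimate on $\Om$ is forced, and using the bounded-overlap identity $\sum_v\|\Om\chi_{\text{cap}_v}\|_1\approx\|\Om\|_1$, I expect to obtain, together with step (i) and Plancherel,
\[\sum_{v}\bigl\|G_{n+s,v}T_j^{n,s,v}b_{n-j,s}\bigr\|_{L_2(\A)}^2 \lc 2^{-(n+s)\ga+2(n+s)\iota}\,\|\Om\|_1^2\,\|b_{n-j,s}\|_{L_2(\A)}^2.\]

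The third step, which is the main obstacle, is the $j$-summed estimate
\[\sum_{j\in\Z}\|b_{n-j,s}\|_{L_2(\A)}^2 \lc \lam\,\C_\Om^{-1}\,\|f\|_{L_1(\A)};\]
combining this with $\|\Om\|_1\leq\C_\Om$ yields the claimed bound. As emphasized in the introduction, the commutative pointwise estimate $|Q|^{-1}\int_Q|b_Q|\lc\lam$ has no noncommutative analogue, so one cannot directly assert $\|b_{n-j,s}\|_\infty\lc\lam/\C_\Om$. Instead, I will expand $\tau(|b_{n-j,s}|^2)$ using the definition \eqref{e:19db}, employ the disjointness $p_kp_{k'}=0$ for $k\neq k'$ to eliminate the diagonal squares, and reduce the surviving cross-terms to traces of the form $\tau(p_{n-j}(f-f_{n-j+s})p_{n-j+s}(f-f_{n-j+s})p_{n-j})$. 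These are controlled by funneling every pointwise step through the single available inequality $q_kf_kq_k\leq \lam\,\C_\Om^{-1}q_k$ from Lemma \ref{l:19cucu}(ii), used in tandem with the Cuculescu size bound $\vphi(1_\A-q)\leq\lam^{-1}\C_\Om\|f\|_{L_1(\A)}$. Handling the non-commutation between the projections $p_k$ and the conditional expectations $f_{n-j+s}$ in these cross-terms is precisely the technical hurdle flagged in the introduction.
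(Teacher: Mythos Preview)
Your proposal contains a geometric misreading in steps (i)--(ii) and, more critically, a gap in step (iii) that cannot be closed along the lines you indicate.

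On the geometry: the multiplier $\Phi(2^{(n+s)\ga}\inn{e^{n+s}_v}{\xi/|\xi|})$ is supported where $|\inn{e^{n+s}_v}{\xi/|\xi|}|\lc 2^{-(n+s)\ga}$, i.e.\ where $\xi$ is nearly \emph{perpendicular} to $e^{n+s}_v$, not nearly parallel. For a fixed $\xi$ the contributing $e^{n+s}_v$'s lie in a $2^{-(n+s)\ga}$-neighborhood of the equator $\xi^\perp\cap\S^{d-1}$, so the overlap is $\sim 2^{(n+s)\ga(d-2)}$, not $O(1)$; this is precisely \eqref{e:19obser}. Your step (ii) inherits the same inversion (``$\xi$ near-aligned with $\pm e^{n+s}_v$'' is false on $\supp\Phi$). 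The paper uses no oscillatory estimate on the multiplier here; the microlocal gain comes entirely from the spatial bound $\|K_j^{n,s,v}\|_{L_1}\lc 2^{-(n+s)\ga(d-1)+(n+s)\iota}\|\Om\|_1$ together with the overlap $2^{(n+s)\ga(d-2)}$ and $\#\Theta_{n+s}\lc 2^{(n+s)\ga(d-1)}$.

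The decisive gap is step (iii). The estimate $\sum_j\|b_{n-j,s}\|_{L_2(\A)}^2\lc\lam\C_\Om^{-1}\|f\|_{L_1(\A)}$ is not proved in the paper and is not available by trace manipulation alone. The obstruction is the unaveraged piece $p_k f p_{k+s}$ inside $b_{k,s}$: your sketch leads to quantities like $\vphi(p_k f p_{k+s} f p_k)\le\|p_{k+s}fp_{k+s}\|_\A\,\vphi(p_kfp_k)$, but $\|p_{k+s}fp_{k+s}\|_\A$ is not dominated by $\lam\C_\Om^{-1}$ because $f$ carries no conditional expectation. The paper therefore does \emph{not} factor $T_j^{n,s,v}$ away from $b_{n-j,s}$. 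Instead, using that $p_{n-j},p_{n-j+s}$ are constant on cubes of $\Q_{n-j+s}$, it rewrites (after reducing to a positive kernel)
\[
T_j^{n,s,v}(p_{n-j}fp_{n-j+s})(x)=\int_{E_j^{n,s,v}(x)}\big[p_{n-j}\big(K_j^{n,s,v}(x-\cdot)f(\cdot)\big)_{n-j+s}p_{n-j+s}\big](z)\,dz,
\]
so that the conditional expectation $(\cdot)_{n-j+s}$ is \emph{inserted by the operator itself}. Positivity and the hypothesis $\|\Om\|_\infty\le 2^{(n+s)\iota}\|\Om\|_1$ then yield the operator inequality $(K_j^{n,s,v}(x-\cdot)f)_{n-j+s}\lc 2^{-jd+(n+s)\iota}\|\Om\|_1\, f_{n-j+s}$, and only now can one invoke $q_kf_kq_k\le\lam\C_\Om^{-1}q_k$ on $p_{n-j+s}f_{n-j+s}p_{n-j+s}$. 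The size $|E_j^{n,s,v}(x)|\lc 2^{jd-(n+s)\ga(d-1)}$ supplies the same gain that Schur with $\|K_j^{n,s,v}\|_{L_1}$ gave for the already-averaged piece $p_{n-j}f_{n-j+s}p_{n-j+s}$. In short, the convolution manufactures the missing conditional expectation; any scheme that bounds $\|T_j^{n,s,v}b_{n-j,s}\|_{L_2}$ by an operator norm times $\|b_{n-j,s}\|_{L_2}$ discards this mechanism and cannot close.
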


The terms involving $(I-G_{n+s,v})T_j^{n,s,v}$ are more complicated. For convenience, we set $L^{n,s,v}_j=(I-G_{n+s,v})T_j^{n,s,v}$.
In Section \ref{s:195}, we shall prove the following lemma.
\begin{lemma}\label{l:19L^1}
Let $n\geq100$ and $s\geq0$. Suppose $\|\Om\|_{\infty}\leq2^{\iota (n+s)}\|\Om\|_{1}$ in each $T_j$. With all notation above, then there exists
 a positive constant $\alp$ such that
$$\sum\limits_j\sum_{v\in\Theta_{n+s}}\big\|L_j^{n,s,v}b_{n-j,s}\big\|_{L_1(\A)}\lc2^{-(n+s)\alp}\C_\Om\|f\|_{L_1(\A)}.$$
\end{lemma}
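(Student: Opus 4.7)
The plan is to exploit the fact that each $L^{n,s,v}_j = (I-G_{n+s,v})T^{n,s,v}_j$ is a scalar-kernel convolution operator acting componentwise on $\M$-valued functions, so that its action on $L_1(\A)$ is convolution against a single scalar kernel $K^{n,s,v}_{j,L}:\R^d\to\complex$. By the standard Minkowski inequality for vector-valued convolutions,
\[
\|L^{n,s,v}_j b_{n-j,s}\|_{L_1(\A)}\;\leq\;\|K^{n,s,v}_{j,L}\|_{L_1(\R^d)}\,\|b_{n-j,s}\|_{L_1(\A)},
\]
so the whole matter reduces to (i) a purely classical $L_1$ kernel bound, plus (ii) a summation of $L_1$-norms of the noncommutative bad functions.

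The central task is the kernel estimate
\[
\sum_{v\in\Theta_{n+s}}\|K^{n,s,v}_{j,L}\|_{L_1(\R^d)}\;\lc\;2^{-(n+s)\bet}\,\|\Om\|_1,\quad\text{uniformly in }j,
\]
for a suitable $\bet>0$. On the Fourier side $\widehat{K^{n,s,v}_{j,L}}(\xi)=\bigl(1-\Phi(2^{(n+s)\ga}\langle e^{n+s}_v,\xi/|\xi|\rangle)\bigr)\,\widehat{K^{n,s,v}_j}(\xi)$, and in polar coordinates
\[
\widehat{K^{n,s,v}_j}(\xi)=\int_{1/2}^{2}\int_{\S^{d-1}} e^{-i 2^j r\langle\tet,\xi\rangle}\,\Om(\tet)\Ga^{n+s}_v(\tet)\,\phi(r\tet)\,\frac{dr}{r}\,d\si(\tet).
\]
On the support of the multiplier one has $|\langle e^{n+s}_v,\xi\rangle|\gc 2^{-(n+s)\ga}|\xi|$, while the angular support of $\Ga^{n+s}_v$ is a spherical cap of radius $\sim 2^{-(n+s)\ga}$ about $e^{n+s}_v$. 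Writing $\tet=e^{n+s}_v+\om$ with $\om$ tangential, the radial derivative of the phase is $2^j\langle\tet,\xi\rangle$ and the tangential gradient of the phase has size $\sim 2^j|\xi_\perp|$; non-stationary phase (integration by parts in $r$ and in $\om$) yields rapid decay in $|\xi|$ away from a small neighbourhood of a stationary surface, and on that surface the usual stationary-phase asymptotic applies. Fourier-inverting and integrating in $x$ produces a pointwise $L_1$-integrable majorant for $K^{n,s,v}_{j,L}$. The hypothesis $\|\Om\|_\infty\leq 2^{\iota(n+s)}\|\Om\|_1$ inherited from Lemma \ref{l:19cur} is used precisely to absorb the $L_\infty$ losses coming from $\Om$ in the amplitude, while summing in $v$ uses only the bounded overlap of $\{\Ga^{n+s}_v\}_v$ and the identity $\sum_v\Ga^{n+s}_v\equiv 1$ on $\S^{d-1}$.

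The remaining summation is then routine. A standard accounting based on \eqref{e:19db}, the ordering $\sum_k p_k\leq 1_\A-q$, the $L_1$-contractivity of conditional expectations, and the identity $\tau(fp_k)=\tau(p_kfp_k)$ yields $\sum_{k\in\Z}\|b_{k,s}\|_{L_1(\A)}\lc\|f\|_{L_1(\A)}$ uniformly in $s\geq0$. Re-indexing via $k=n-j$ and combining with the kernel bound gives
\[
\sum_{j,v}\|L^{n,s,v}_j b_{n-j,s}\|_{L_1(\A)}\;\lc\; 2^{-(n+s)(\bet-C\iota)}\,\C_\Om\,\|f\|_{L_1(\A)},
\]
which is the asserted bound with $\alp=\bet-C\iota>0$, provided $\iota$ is chosen small enough.

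The main obstacle is item (i): producing an \textit{$L_1(dx)$-integrable} pointwise majorant for $K^{n,s,v}_{j,L}$ with quantitative gain $2^{-(n+s)\bet}$, rather than just an $L_2$ or weak-$L_1$ bound. The stationary-phase analysis must balance three competing scales---the angular opening $2^{-(n+s)\ga}$ of the microlocal sector, the frequency separation $|\langle e^{n+s}_v,\xi\rangle|\gc 2^{-(n+s)\ga}|\xi|$ imposed by $(I-G_{n+s,v})$, and the amplitude loss $\|\Om\|_\infty\leq 2^{\iota(n+s)}\|\Om\|_1$---and the parameters $\ga,\iota,\bet$ must be tuned so that their net exponent of $2^{n+s}$ remains strictly negative.
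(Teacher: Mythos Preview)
Your reduction to a kernel bound via Minkowski,
\[
\|L^{n,s,v}_j b_{n-j,s}\|_{L_1(\A)}\leq\|K^{n,s,v}_{j,L}\|_{L_1(\R^d)}\|b_{n-j,s}\|_{L_1(\A)},
\]
fails because the kernel $K^{n,s,v}_{j,L}$ is \emph{not} in $L_1(\R^d)$. The multiplier $m(\xi)=1-\Phi\big(2^{(n+s)\ga}\langle e^{n+s}_v,\xi/|\xi|\rangle\big)$ is homogeneous of degree zero and hence singular at $\xi=0$. Since $\Om\Ga^{n+s}_v$ has no reason to have mean zero on $\S^{d-1}$ (the cancellation of $\Om$ is destroyed by the angular cutoff), one has $\widehat{K^{n,s,v}_j}(0)\neq 0$ generically, so $m(\xi)\widehat{K^{n,s,v}_j}(\xi)$ inherits a genuine degree-zero discontinuity at the origin. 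Its inverse Fourier transform then has a non-integrable $|x|^{-d}$ tail; your stationary-phase discussion controls the large-$|\xi|$ behaviour but says nothing about this low-frequency obstruction.

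The paper's proof circumvents this by inserting a Littlewood--Paley decomposition $L^{n,s,v}_j=\sum_k(I-G_{n+s,v})\Lam_kT^{n,s,v}_j$ and treating the two frequency ranges differently. For $k\leq j-[(n+s)\eps_0]$ (high frequencies) a pure kernel bound does work and yields a factor $2^{k-j}$, essentially along the lines you sketch. For $k>j-[(n+s)\eps_0]$ (low frequencies) the kernel bound is useless, and instead the paper writes $\Lam_kT^{n,s,v}_j b_{n-j,s}=K^{n,s,v}_j*\check\beta_k*b_{n-j,s}$ and exploits the cancellation $\int_Q b_{n-j,s}=0$ for $Q\in\Q_{n-j+s}$ to gain $2^{j-n-s-k}$ from $\|\check\beta_k*b_{n-j,s}\|_{L_1(\A)}$. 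The two estimates are then glued at $k=j-[(n+s)\eps_0]$. The missing ingredient in your proposal is precisely this use of the mean-zero property of the bad function; without it no amount of stationary phase on the kernel side will produce an integrable majorant.
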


We now complete the proof of \eqref{e:19boff}. It is sufficient to prove \eqref{e:19boff} under the condition that for all fixed $n\geq100$ and $s\geq0$, $\|\Om\|_{\infty}\leq2^{\iota (n+s)}\|\Om\|_{1}$ in $T_j$.  By Chebyshev's inequality and triangle inequality, we get
\begin{equation*}
\begin{split}
&\quad \tilde{\vphi}\big(\big|\zet\sum_{n\geq100}\sum_{s\geq0}\sum_{j\in
\mathbb{Z}}T_jb_{n-j,s}\eps_j\zet\big|>\lambda/8\big)\\
&\lc {\lam^{-2}}\Big\|\zet\sum\limits_{n\geq100}\sum_{s\geq0}\sum\limits_{j}\sum\limits_{v\in\Theta_{n+s}}G_{n+s,v}T_j^{n,s,v}b_{n-j,s}\eps_j\zet\Big\|_{L_2(L_\infty\mathfrak{m}\ot\A)}^2
\\
&\quad +\lam^{-1}\sum_{n\geq100}\sum_{s\geq0}\sum\limits_j\sum_{v\in\Theta_{n+s}}\big\|\zet L_j^{n,s,v}b_{n-j,s}\eps_j\zet\big\|_{L_1(L_\infty(\mathfrak{m})\ot\A)}\\
&=:I+II.\\
\end{split}
\end{equation*}

First we consider the term $I$. Recall that  $\{\eps_j\}_j$ is a Rademacher sequence on a probability space $(\mathfrak{m},P)$. So we have the following orthogonal equality
\Be\label{e:19orth}
\Big\|\sum_{j\in\Z}\eps_ja_j\Big\|^2_{L_2(L_\infty(\mathfrak{m})\ot\A)}=\sum_j\|a_j\|_{L_2(\A)}^2.
\Ee
Choose $0<\iota<\fr{\ga}{2}<\fr12$. By triangle inequality, the above orthogonal equality, using H\"older's inequality to remove $\zet$ since $\zet$ is a projection in $\A$, and finally by Lemma \ref{l:19L^2}, we get that
\begin{equation*}
\begin{split}
I&\lc\lam^{-2}\Big(\sum\limits_{n\geq100}\sum_{s\geq 0}\Big\|\sum\limits_{j}\eps_j\zet
\sum\limits_{v\in\Theta_{n+s}}G_{n+s,v}T_j^{n,s,v}b_{n-j,s}\zet\Big\|_{L_2(L_\infty(\mathfrak{m})\ot\A)}\Big)^2\\
&\lc\lam^{-2}\Big(\sum\limits_{n\geq100}\sum_{s\geq 0}\Big(\sum\limits_{j}\Big\|\zet\sum\limits_{v\in\Theta_{n+s}}G_{n+s,v}T_j^{n,s,v}b_{n-j,s}\zet\Big\|^2_{L_2(\A)}\Big)^{1/2}\Big)^2\\
&\lc\lam^{-2}\Big(\sum\limits_{n\geq100}\sum_{s\geq 0}\big(2^{-(n+s)\ga+2(n+s)\iota}\C_\Om\lam\|f\|_{L_1(\A)})^{\fr{1}{2}}\Big)^{2}
\lc \lam^{-1}\C_\Om\|f\|_{L_1(\A)}.
\end{split}
\end{equation*}

For the term $II$, by the fact that $\{\eps_j\}_{j\in\Z}$ is a bounded sequence,  using H\"older's inequality to remove $\zet$ and by Lemma \ref{l:19L^1}, we can get that
\Bes
\begin{split}
II&\lc\lam^{-1}\sum_{n\geq100}\sum_{s\geq0}\sum\limits_j\sum_{v\in\Theta_{n+s}}\big\|L_j^{n,s,v}b_{n-j,s}\big\|_{L_1(\A)}\\
&\lc\lam^{-1}\sum_{n\geq100}\sum_{s\geq0}2^{-(n+s)\alp}\C_\Om\|f\|_{L_1(\A)}\lc\C_\Om\lam^{-1}\|f\|_{L_1(\A)}.
\end{split}
\Ees

Hence we complete the proof of \eqref{e:19boff} based on Lemmas \ref{l:19cur}, \ref{l:19L^2} \ref{l:19L^1}. All their proofs will  be given in Section \ref{s:195}.

\vskip0.24cm

\subsection{Estimates for the good functions}\label{s:1943}\quad
\vskip0.24cm

Now we turn to the estimates for good functions. The proofs of diagonal terms and off-diagonal terms will be quite different. We first consider the diagonal terms which is simpler since they behave similar to that in the classical Calder\'on-Zygmund decomposition.
Following the classical strategy, we should first establish the $L_2$ boundedness of $T$. In this situation, the condition for the kernel $\Om$ in fact can be relaxed to $\Om\in L(\log^+L)^{1/2}(\S^{d-1})$.
\begin{lemma}\label{l:19tgl2}
Suppose that $\Om$ satisfy \eqref{e:19Hom}, $\Om\in L(\log^+ L)^{1/2}(\S^{d-1})$ and the cancelation property $\int_{\S^{d-1}}\Om(\tet)d\si(\tet)=0$. Then we have
\Bes
\|Tf\|_{L_2(L_\infty(\mathfrak{m})\ot\A)}\lc\|f\|_{L_2(\A)},
\Ees
where the implicit constant above depends only on the dimension and $\Om$.
\end{lemma}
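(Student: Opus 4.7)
\medskip
\noindent\textbf{Proof plan.} My strategy is to reduce Lemma \ref{l:19tgl2} to a purely scalar Fourier-multiplier estimate for rough kernels, and then to appeal to the classical Duoandikoetxea--Rubio de Francia theory.

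First I exploit orthogonality of the Rademacher sequence. Expanding $|Tf(x,z)|^2=\sum_{j,k}\eps_j(z)\eps_k(z)(T_jf(x))^{\ast}T_kf(x)$, integrating against the tensor trace $\tilde{\vphi}=\int_{\mathfrak{m}}\otimes\vphi$, and using $\int_{\mathfrak{m}}\eps_j\eps_k\,dP=\delta_{jk}$ produces
\Bes
\|Tf\|_{L_2(L_\infty(\mathfrak{m})\ot\A)}^2=\sum_{j\in\Z}\|T_jf\|_{L_2(\A)}^2.
\Ees
Since the kernel $K_j(x)\Om(x)$ of $T_j$ is scalar-valued, $T_j$ acts on the Bochner space $L_2(\A)\simeq L_2(\R^d;L_2(\M))$ as a Fourier multiplier in the spatial variable. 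Applying Plancherel's identity in $x$ with values in the Hilbert space $L_2(\M)$ gives
\Bes
\sum_{j\in\Z}\|T_jf\|_{L_2(\A)}^2=(2\pi)^{-d}\int_{\R^d}\Big(\sum_{j\in\Z}|m_j(\xi)|^2\Big)\|\widehat{f}(\xi)\|_{L_2(\M)}^2\,d\xi,
\Ees
where $m_j(\xi)=\widehat{K_j\Om}(\xi)$. Hence the lemma reduces to the scalar estimate
\Be\label{e:plan-multest}
\sup_{\xi\in\R^d}\sum_{j\in\Z}|m_j(\xi)|^2\lc\|\Om\|_{L(\log^+\!\!L)^{1/2}}^2.
\Ee

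By homogeneity $m_j(\xi)=m_0(2^j\xi)$, and two pointwise estimates are available for $m_0$: the \emph{cancellation bound} $|m_0(\xi)|\lc|\xi|\cdot\|\Om\|_1$, which follows from $\int_{\S^{d-1}}\Om\,d\si=0$ by replacing $e^{-ir\tet\cdot\xi}$ by $e^{-ir\tet\cdot\xi}-1$ inside the radial integral; and the \emph{stationary-phase decay} $|m_0(\xi)|\lc\|\Om\|_\infty|\xi|^{-\delta}$ for some $\delta>0$, coming from the fact that $\S^{d-1}$ has nonvanishing Gaussian curvature when $d\ge 2$. To accommodate the roughness of $\Om$, I would split $\Om=\sum_{k\ge 0}b_k$, where $b_k$ is supported on the level set $E_k=\{\tet\in\S^{d-1}:2^k\leq|\Om(\tet)|<2^{k+1}\}$ and is adjusted by its spherical mean so that each $b_k$ has vanishing average on $\S^{d-1}$. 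Then $\|b_k\|_\infty\lc 2^k$ and $\|b_k\|_1\lc 2^k|E_k|$; applying the two pointwise estimates to each $b_k$, balancing them over the variable $u=2^j|\xi|$, and summing the resulting geometric series in $j$ yield a bound on $\sum_j|\widehat{K_jb_k}(\xi)|^2$. Taking square roots, summing over $k$ with Cauchy--Schwarz against a weight $(k+1)^{1/2}$, and recognising the resulting weighted sum as comparable to $\|\Om\|_{L(\log^+\!\!L)^{1/2}}$ closes \eqref{e:plan-multest}.

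The main obstacle is to match precisely the $L(\log^+\!\!L)^{1/2}$ integrability, rather than a stronger hypothesis. This requires squeezing the optimal exponent out of the stationary-phase estimate and organising the summation over $k$ with exactly the right logarithmic weight. This is essentially the classical $L^2$ square-function estimate of Duoandikoetxea and Rubio de Francia for rough kernels, which I would adapt to the present context. Apart from this purely scalar Littlewood--Paley ingredient, the entire noncommutative content of the proof is contained in the two reductions in the first paragraph.
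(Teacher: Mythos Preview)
Your reductions via Rademacher orthogonality and Hilbert-space-valued Plancherel are exactly what the paper does; the noncommutative content is indeed exhausted at that point, and everything comes down to the scalar bound $\sup_\xi\sum_j|m_j(\xi)|^2<\infty$.

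There is, however, a genuine error in your mechanism for the decay estimate. The bound $|m_0(\xi)|\lesssim\|\Om\|_\infty|\xi|^{-\delta}$ does \emph{not} follow from the Gaussian curvature of $\S^{d-1}$: stationary phase on the sphere requires the amplitude to be smooth, and with merely $\Om\in L^\infty$ the spherical integral $\int_{\S^{d-1}}\Om(\theta)e^{-iR\theta\cdot\xi'}\,d\sigma(\theta)$ need not decay at all as $R\to\infty$. The estimate is nevertheless true, but for a different reason. Writing $m_0(\xi)=\int_{\S^{d-1}}\Om(\theta)\bigl(\int_0^\infty\phi(r)r^{-1}e^{-ir\theta\cdot\xi}\,dr\bigr)\,d\sigma(\theta)$, one integrates by parts in the \emph{radial} variable to get the inner integral bounded by $\min(1,|\theta\cdot\xi|^{-1})\lesssim|\xi|^{-\delta}|\langle\theta,\xi'\rangle|^{-\delta}$, and then uses $\int_{\S^{d-1}}|\langle\theta,\xi'\rangle|^{-\delta}\,d\sigma<\infty$ for $\delta<1$. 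This radial oscillation is precisely the mechanism in the paper's proof.

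The paper also organizes the summation differently from your level-set decomposition: for each $j$ with $2^j|\xi|>1$ it truncates $\Om$ at the $j$-\emph{dependent} height $(2^j|\xi|)^\nu\|\Om\|_1$. The bounded piece then satisfies $|\widehat{\mathbf K_j}(\xi)|\lesssim(2^j|\xi|)^{\nu-\delta}\|\Om\|_1$, directly summable in $j$; for the tail piece $\{|\Om|>(2^j|\xi|)^\nu\|\Om\|_1\}$ a simple counting in $j$ yields $\bigl(\int|\Om|(\log^+(|\Om|/\|\Om\|_1))^{1/2}\bigr)^2$ with no Cauchy--Schwarz step. Your fixed decomposition $\Om=\sum_kb_k$ can also be made to work, but the bookkeeping you propose (``Cauchy--Schwarz against $(k+1)^{1/2}$'') is not quite right: after summing the geometric series in $j$ what emerges is a factor $(1+\log(1/|E_k|))^{1/2}$ rather than $(1+k)^{1/2}$, and one then has to argue separately that $\sum_k2^k|E_k|(\log(1/|E_k|))^{1/2}$ is controlled when $\Om\in L(\log^+L)^{1/2}$. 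The paper's $j$-dependent truncation sidesteps this issue entirely.
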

\begin{remark*}
It should be pointed out that the cancelation condition $\int_{\S^{d-1}}\Om(\tet)d\tet=0$ in Theorem \ref{p:19m} is only used in this lemma to guarantee the $L_2$ boundedness of $T$.
\end{remark*}
The proof of Lemma \ref{l:19tgl2} will be given in Section \ref{s:196}. Based on this lemma, we could prove required bound for the diagonal term $g_d$ of good functions as follows. By using the property of $q_k$s in Lemma \ref{l:19cucu}, J. Parcet \cite{Par09} obtained the following basic property of $g_d$:
\Be\label{e:19gdes}
\|g_d\|_{L_1(\A)}\lc\|f\|_{L_1(\A)},\quad \|g_d\|_{L_\infty(\A)}\lc\lam\C_\Om^{-1}.
\Ee
By Lemma \ref{l:19tgl2}, it is not difficult to see that the $L_2$ norm of $T$ is bounded by $\C_\Om$ (see the details in Subsection \ref{s:1961} for its proof). Therefore we get the estimate for $g_d$ as follows

\Bes
\begin{split}
\tilde{\vphi}(|Tg_d|>\lam/4)&\lc\lam^{-2}\|Tg_d\|^2_{L_2(L_\infty(\mathfrak{m})\ot\A)}
\lc\lam^{-2}\C_\Om^2\|g_d\|^2_{L_2(\A)}\\
&\lc\lam^{-2}\C_\Om^2\|g_d\|_{L_1(\A)}\|g_d\|_{L_\infty(\A)}\lc\lam^{-1}\C_\Om\|f\|_{L_1(\A)}.
\end{split}
\Ees
where in the first inequality we use Chebyshev's inequality, the second inequality follows from Lemma \ref{l:19tgl2}, the third and fourth inequalities just follow from \eqref{e:19gdes}.

In the remaining parts of this subsection, we give effort to the estimate of $g_\off$. We first use Lemma \ref{l:19excep} to reduce the proof to the case $\zet Tg_\off\zet$.
In fact
\Bes
Tg_\off=(1_\A-\zeta)Tg_\off(1_\A-\zet)+\zet Tg_\off(1-\zet)+(1-\zet)Tg_\off\zet+\zet Tg_\off\zet.
\Ees
By Lemma \ref{l:19excep} and the same argument as done for the bad functions, it is sufficient to consider the last term $\zet Tg_\off\zet$ above. Thus our goal is to prove that
\Be\label{e:19goffg}
\tilde\vphi(|\zet Tg_\off\zet|>\lam/8)\lc\lam^{-1}\C_\Om\|f\|_{L_1(\A)}.
\Ee

Next we introduce another expression of the off-diagonal terms $g_\off$ and related estimates which were proved by J. Parcet in \cite{Par09}.
\begin{lemma}\label{l:19gbasic}
Let $df_s$ be martingale difference. We could rewrite $g_{\off}$ as follows
\Bes
g_{\off}=\sum_{s\geq1}\sum_{k\in\Z}p_kdf_{k+s}q_{k+s-1}+q_{k+s-1}df_{k+s}p_k=:\sum_{s\geq1}\sum_{k\in\Z}g_{k,s}=:\sum_{s\geq1} g_{(s)}.
\Ees
The martingale difference sequence of $g_{(s)}$ satisfies ${dg_{(s)}}_{k+s}=g_{k,s}$ and
$\supp^* g_{k,s}\leq p_k\leq1_\A-q_k$,
where $\supp^*$ is weak support projection defined by $\supp^*a=1_\A-\mathbf{q}$, $\mathbf{q}$ is the greatest projection satisfying $\mathbf{q}a\mathbf{q}=0$. Meanwhile, we have the following estimates
\Bes
\sup_{s\geq1}\|g_{(s)}\|_{L_2(\A)}^2=\sup_{s\geq1}\sum_{k\in\Z}\|g_{k,s}\|_{L_2(\A)}^2\lc\lam\C_\Om^{-1}\|f\|_{L_1(\A)}.
\Ees
\end{lemma}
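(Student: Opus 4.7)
The plan is to establish the four claims in order, all relying on the fine structure of the Cuculescu projections from Lemma \ref{l:19cucu}. The first three are algebraic/structural, while the $L_2$ bound is the substantive point and is the noncommutative analogue of the classical $L_2$ control on the good function in the Calder\'on--Zygmund decomposition; it follows Parcet \cite{Par09}.

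For the identity $g_\off = \sum_{s\geq 1}\sum_k g_{k,s}$, I would telescope $\sum_{s\geq 1} p_k df_{k+s}q_{k+s-1}$ by writing $q_{k+s-1} = q_k - \sum_{\ell=k+1}^{k+s-1} p_\ell$ (empty for $s=1$), swapping the order of summation, and using $\sum_{s\geq s_0} df_{k+s} = f - f_{k+s_0-1}$. After simplification this becomes
\[
\sum_{s\geq 1} p_k df_{k+s}q_{k+s-1} \;=\; p_k fq - p_k f_k q_k + \sum_{\ell>k} p_k f_\ell p_\ell.
\]
The key cancellation is $p_k f_k q_k = 0$: since $p_k\leq q_{k-1}$ and $q_k\leq q_{k-1}$ only the corner $p_k(q_{k-1}f_k q_{k-1})q_k$ survives, and then the commutation $[q_k,q_{k-1}f_kq_{k-1}]=0$ of Lemma \ref{l:19cucu}(i) combined with $p_k q_k = 0$ forces this corner to vanish. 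Summing over $k$ gives $(1_\A-q)fq + \sum_{i<j} p_i f_j p_j$; taking the adjoint produces $qf(1_\A-q) + \sum_{i>j} p_i f_i p_j$. Adding reconstructs the expression for $g_\off$ displayed just before Lemma \ref{l:19gbasic}.

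For (ii) and (iii), note that $p_k,q_{k+s-1}\in\A_{k+s-1}$ (since $s\geq 1$) and $df_{k+s}\in\A_{k+s}$ with $\mathsf{E}_{k+s-1}(df_{k+s})=0$; hence $g_{k,s}\in\A_{k+s}$ and $\mathsf{E}_{k+s-1}(g_{k,s})=0$, so $g_{(s)}$ is a $\{\A_t\}$-martingale whose level-$(k+s)$ difference is precisely $g_{k,s}$. The support claim is immediate from writing $g_{k,s} = p_k X + X^* p_k$, while $p_k\leq 1_\A - q_k$ is clear from $p_k = q_{k-1}-q_k$.

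The main obstacle is the uniform $L_2$ bound. Martingale Pythagoras gives $\|g_{(s)}\|_{L_2(\A)}^2 = \sum_k \|g_{k,s}\|_{L_2(\A)}^2$, and, setting $A_{k,s} = p_k df_{k+s} q_{k+s-1}$ so that $g_{k,s}=A_{k,s}+A_{k,s}^*$ by the self-adjointness of $f$, the task reduces to
\[
\sum_k \bigl\|p_k df_{k+s} q_{k+s-1}\bigr\|_{L_2(\A)}^2 \;\lc\; \lam\,\C_\Om^{-1}\,\|f\|_{L_1(\A)}
\]
uniformly in $s$. Expanding $df_{k+s}=f_{k+s}-f_{k+s-1}$, one controls the $f_{k+s-1}$-piece by the pointwise Cuculescu bound $q_{k+s-1}f_{k+s-1}q_{k+s-1}\leq \lam\C_\Om^{-1} q_{k+s-1}$ of Lemma \ref{l:19cucu}(ii), inserted on the right via $p_k q_{k+s-1}=0$; the $f_{k+s}$-piece is handled by further inserting $q_{k+s}$ (again using $q_{k+s}f_{k+s}q_{k+s}\leq \lam\C_\Om^{-1}q_{k+s}$) and absorbing the complementary-projection error into the next martingale difference. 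The hard part is that, unlike in the scalar setting where one has a pointwise average bound on each bad cube, here the \emph{only} pointwise $L_\infty$-type datum available is Lemma \ref{l:19cucu}(ii); so every estimate must be routed through that inequality, combined with the disjointness of the $p_k$'s (which sum to $1_\A - q$), so that after summation in $k$ the scalar mass collapses to $\|f\|_{L_1(\A)}$ with the Cuculescu level $\lam\C_\Om^{-1}$ as prefactor. The detailed execution follows the argument of \cite{Par09} for the off-diagonal good function.
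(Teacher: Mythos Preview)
The paper does not prove this lemma; it simply attributes it to Parcet \cite{Par09}. Your proposal is essentially to reproduce Parcet's argument, so in that sense it is the same approach the paper relies on. The algebraic parts --- the rewriting of $g_\off$, the martingale-difference identification, and the observation that $g_{k,s}=p_kX+X^*p_k$ with $X=df_{k+s}q_{k+s-1}$ --- are correct as you outline them (your key cancellation $p_kf_kq_k=0$ via the commutation in Lemma~\ref{l:19cucu}(i) is exactly right).

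For the $L_2$ bound your strategy is correct but the execution you describe is muddled. The phrase ``inserted on the right via $p_kq_{k+s-1}=0$'' does not accomplish what you need (that orthogonality prevents, rather than enables, insertion of $q_{k+s-1}$ on the $p_k$ side), and ``absorbing the complementary-projection error into the next martingale difference'' is unnecessary. The clean route --- exactly the device the present paper uses later in \eqref{e:19split}--\eqref{e:19pnjs} --- is to write, for either $m=k+s-1$ or $m=k+s$,
\[
\|p_k f_m q_{k+s-1}\|_{L_2(\A)}^2
=\varphi\bigl(p_k f_m^{1/2}\,(f_m^{1/2}q_{k+s-1}f_m^{1/2})\,f_m^{1/2}p_k\bigr)
\le \|q_{k+s-1}f_mq_{k+s-1}\|_{\A}\,\varphi(p_kf_mp_k),
\]
using the $C^*$-identity $\|f_m^{1/2}qf_m^{1/2}\|_\A=\|qf_mq\|_\A$. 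The operator norm is $\lc\lam\C_\Om^{-1}$ by Lemma~\ref{l:19cucu}(ii) (directly when $m=k+s-1$; via one dyadic doubling $q_{k+s-1}f_{k+s}q_{k+s-1}\le 2^dq_{k+s-1}f_{k+s-1}q_{k+s-1}$ when $m=k+s$), while $\varphi(p_kf_mp_k)=\varphi(p_kf)$ by trace preservation, so the sum over $k$ collapses to $\varphi\bigl((1_\A-q)f\bigr)\le\|f\|_{L_1(\A)}$. No iteration or error absorption is required.
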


The strategy to deal with the off-diagonal terms $g_\off$ is similar to that we use in the proof for the bad functions, although the technical proofs may be different. By the expression of $g_\off$ in Lemma \ref{l:19gbasic} and the formula $f=\sum_{n\in\Z}df_n$, we could write
\Bes
\begin{split}
\zet Tg_\off\zet&=\zet\sum_{s\geq1}\sum_{j\in\Z} \eps_jT_jg_{(s)}\zet=\zet\sum_{s\geq1}\sum_{j\in\Z}\sum_{n\in\Z} \eps_jT_jd\big(g_{(s)}\big)_{n-j+s}\zet\\
&=\zet\sum_{s\geq1}\sum_{j\in\Z}\sum_{n\in\Z} \eps_jT_jg_{n-j,s}\zet
=\zet\sum_{s\geq1}\sum_{n\geq100}\sum_{j\in\Z} \eps_jT_jg_{n-j,s}\zet
\end{split}
\Ees
where the last equality follows from the fact: if $\zet(x)\neq0$ and $n<100$, we get that $T_jg_{n-j,s}(x)=0$ for all $s\geq1$ by the property in (ii) of Lemma \ref{l:19excep}, $\supp^* g_{k,s}\leq p_k\leq1_\A-q_k$ in Lemma \ref{l:19gbasic} and the similar arguments in \eqref{e:19exec}.

By Chebyshev's inequality, triangle inequality, $\zet$ is a projection in $\A$ and the orthogonal equality \eqref{e:19orth}, we then get that
\Bes
\begin{split}
\tilde{\vphi}(|\zet Tg_\off\zet|>\lam)&\lc\lam^{-2}\|\zet Tg_\off\zet\|_{L_2(L_\infty(\mathfrak{m})\ot\A)}^2\\
&\lc\lam^{-2}\bigg(\sum_{s\geq1}\sum_{n\geq100}\Big(\sum_j\|T_j g_{n-j,s}\|^2_{L_2(\A)}\Big)^{1/2}\bigg)^2.
\end{split}
\Ees
Hence to finish the proof for the off-diagonal terms $g_\off$, it is sufficient to show that
\Be\label{e:19boffgoal}
\sum_{s\geq1}\sum_{n\geq100}\Big(\sum_j\|T_jg_{n-j,s}\|^2_{L_2(\A)}\Big)^{1/2}
\lc(\C_\Om\lam\|f\|_{L_1(\A)})^{1/2}
\Ee

As done in the proof for  the bad functions, we first show that \eqref{e:19boffgoal} holds if $\Om$ is restricted in $D^\iota=\{\tet\in\S^{d-1}:|\Om(\tet)|\geq2^{\iota(n+s)}\|\Om\|_1\}$, where $\iota\in(0,1)$.  Recall the definition of $T_{j,\iota}^{n,s}$ in \eqref{e:19tjiota}. Then  we have the following lemma.

\begin{lemma}\label{l:19gomiota}
Suppose $\Om\in L(\log^+ L)^2(\S^{d-1})$. With all notation above, we get that
\Bes
\sum_{s\geq1}\sum_{n\geq100}\Big(\sum_j\|T_{j,\iota}^{n,s}g_{n-j,s}\|^2_{L_2(\A)}\Big)^{1/2}
\lc(\C_\Om\lam\|f\|_{L_1(\A)})^{1/2}.
\Ees

\end{lemma}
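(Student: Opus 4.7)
The plan is to estimate each summand by the product of an $L_2(\A)$ operator norm of $T_{j,\iota}^{n,s}$ controlled by $\|\Om\chi_{D^\iota}\|_{L_1(\S^{d-1})}$ and the $L_2(\A)$ norm of $g_{n-j,s}$, perform the inner sum over $j$ using Lemma \ref{l:19gbasic}, and close by showing that $\sum_{s\ge1}\sum_{n\ge100}\|\Om\chi_{D^\iota_{n+s}}\|_{L_1(\S^{d-1})}\lc\C_\Om$ via a Fubini rearrangement.

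For the first two steps, I would view $T_{j,\iota}^{n,s}$ as a scalar convolution against the kernel $K_j(x)\Om(x/|x|)\chi_{D^\iota}(x/|x|)$, whose $L_1(\R^d)$ norm in polar coordinates reduces to $\|\Om\chi_{D^\iota}\|_{L_1(\S^{d-1})}$ times $\int_0^\infty\phi(u)u^{-1}\,du$, independent of $j$. Since convolution with a scalar $L_1$ kernel is bounded on $L_p(\A)$, I obtain $\|T_{j,\iota}^{n,s}g_{n-j,s}\|_{L_2(\A)}\lc\|\Om\chi_{D^\iota_{n+s}}\|_{L_1(\S^{d-1})}\|g_{n-j,s}\|_{L_2(\A)}$. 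Squaring, summing over $j$, reindexing $k=n-j$, and invoking $\sum_k\|g_{k,s}\|_{L_2(\A)}^2\lc\lam\C_\Om^{-1}\|f\|_{L_1(\A)}$ from Lemma \ref{l:19gbasic} gives
\[
\Big(\sum_j\|T_{j,\iota}^{n,s}g_{n-j,s}\|_{L_2(\A)}^2\Big)^{1/2}\lc\|\Om\chi_{D^\iota_{n+s}}\|_{L_1(\S^{d-1})}\big(\lam\C_\Om^{-1}\|f\|_{L_1(\A)}\big)^{1/2}.
\]

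The main obstacle will be the subsequent double summation over $n\ge 100$ and $s\ge 1$: each individual norm only decays polynomially as $\|\Om\chi_{D^\iota_{n+s}}\|_{L_1}\lc(n+s)^{-2}\C_\Om$ from the $L(\log^+L)^2$-integrability of $\Om$, and this decay is not absolutely summable term by term over the two indices. The key trick is to interchange the double sum with the spherical integral,
\[
\sum_{s\ge1}\sum_{n\ge100}\|\Om\chi_{D^\iota_{n+s}}\|_{L_1(\S^{d-1})}=\int_{\S^{d-1}}|\Om(\tet)|\,N(\tet)\,d\si(\tet),
\]
with $N(\tet)=\#\{(n,s):n\ge100,\,s\ge1,\,\iota(n+s)\log 2\le\log^+(|\Om(\tet)|/\|\Om\|_1)\}$, and then to bound $N(\tet)\lc\iota^{-2}[\log^+(|\Om(\tet)|/\|\Om\|_1)]^2$ by counting lattice points inside a right triangle. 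This collapses the double sum into a single integral whose shape matches the $L(\log^+L)^2$-moment in the definition \eqref{e:19constantom} of $\C_\Om$, producing the bound $\lc\iota^{-2}\C_\Om$ and, combined with the displayed estimate above, closing the proof.
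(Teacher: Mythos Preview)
Your proposal is correct and follows essentially the same route as the paper: bound $\|T_{j,\iota}^{n,s}\|_{L_2(\A)\to L_2(\A)}$ by the $L_1$-norm of its convolution kernel (the paper phrases this via Schur's lemma), sum in $j$ using Lemma~\ref{l:19gbasic}, then swap $\sum_{n,s}$ with $\int_{\S^{d-1}}$ and count the lattice points $(n,s)$ with $2^{\iota(n+s)}\le|\Om(\tet)|/\|\Om\|_1$ to recover the $L(\log^+L)^2$ moment in $\C_\Om$. Your observation that the termwise bound $(n+s)^{-2}$ would not be doubly summable, so that the Fubini step is genuinely needed, is exactly the point.
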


The proof of Lemma \ref{l:19gomiota} will be given in Section \ref{s:196}. By Lemma \ref{l:19gomiota}, to prove \eqref{e:19boffgoal},  we only need to show \eqref{e:19boffgoal} under the condition that the kernel function $\Omega$ satisfies $\|\Om\|_{\infty}\leq2^{\iota (n+s)}\|\Om\|_{1}$ in each $T_j$.
For each fixed $s\geq1$ and $n\geq100$, we make a microlocal  decomposition of $T_j$ as follows:
\Bes
T_j=\sum_{v\in\Theta_{n+s}}T_j^{n,s,v},\quad T_j^{n,s,v}=G_{n+s,v}T_j^{n,s,v}+(I-G_{n+s,v})T_j^{n,s,v}.
\Ees
Here all notation $T_j^{n,s,v}$, $G_{n+s,v}$ are the same as those in the proof of the bad functions.
\begin{lemma}\label{l:19gl2}
Let $n\geq100$ and $s\geq1$. Suppose that $\|\Om\|_{\infty}\leq2^{\iota (n+s)}\|\Om\|_{1}$ in each $T_j$. Then we get the following estimate
\Bes
\Big\|\sum_{v\in\Th_{n+s}}G_{n+s,v}T_j^{n,s,v}g_{n-j,s}\Big\|^2_{L_2(\A)}\lc2^{-(n+s)(\ga-2\iota)}\|\Om\|^2_1\|g_{n-j,s}\|_{L_2(\A)}^2.
\Ees
\end{lemma}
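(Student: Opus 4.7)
The plan is to reduce this $L_2(\A)$-operator-norm estimate to a uniform scalar Fourier multiplier bound. Since $L_2(\A)$ is isometric to the Bochner space $L_2(\R^d;L_2(\M))$ and each $T_j^{n,s,v}$ acts as a scalar convolution operator in the spatial variable, one has
\[
\sum_{v\in\Th_{n+s}} G_{n+s,v} T_j^{n,s,v} \;=\; \mathcal{F}^{-1}\, m_j(\xi)\,\mathcal{F},
\qquad
m_j(\xi) := \sum_{v\in\Th_{n+s}} \Phi\bigl(2^{(n+s)\ga}\inn{e_v^{n+s}}{\xi/|\xi|}\bigr)\, \widehat{K^{n,s,v}_j}(\xi),
\]
where $K_j^{n,s,v}(y)=\Om(y)\Ga_v^{n+s}(y) K_j(y)$. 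Plancherel in the $\R^d$ variable reduces the claim to the uniform symbol bound $\|m_j\|_\infty^2 \lc 2^{-(n+s)(\ga-2\iota)}\|\Om\|_1^2$; in particular, the specific structure of $g_{n-j,s}$ plays no role — only its $L_2(\A)$ norm is used.

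For the pointwise symbol bound I would fix $\xi\ne 0$ and argue geometrically. The cutoff $\Phi(2^{(n+s)\ga}\inn{e_v^{n+s}}{\xi/|\xi|})$ vanishes unless $e_v^{n+s}$ lies in the equatorial band $B(\xi)\subset\S^{d-1}$ of angular width $\sim 2^{-(n+s)\ga}$ perpendicular to $\xi/|\xi|$. For each contributing $v$, polar coordinates in $y$ (the Jacobian factor $r^{d-1}$ absorbs $|y|^{-d}$, while the radial cutoff $\phi_j$ integrates to a constant independent of $j$) give the trivial majorant
\[
|\widehat{K_j^{n,s,v}}(\xi)| \;\lc\; \int_{\operatorname{supp}(\Ga_v^{n+s})\cap \S^{d-1}} |\Om(\tet)|\,d\si(\tet).
\]
Since the caps $\{\operatorname{supp}(\Ga_v^{n+s})\cap\S^{d-1}\}_v$ have bounded overlap (by the $2^{-(n+s)\ga-4}$-separation of the $\{e_v^{n+s}\}$) and the contributing ones are all contained in a slight thickening of $B(\xi)$, summing in $v$ yields
\[
|m_j(\xi)| \;\lc\; \int_{B(\xi)} |\Om(\tet)|\,d\si(\tet) \;\le\; \|\Om\|_\infty\cdot |B(\xi)| \;\lc\; 2^{\iota(n+s)-(n+s)\ga}\|\Om\|_1
\]
under the standing hypothesis $\|\Om\|_\infty\le 2^{\iota(n+s)}\|\Om\|_1$. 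Squaring produces a bound even stronger than the claimed $2^{-(n+s)(\ga-2\iota)}\|\Om\|_1^2$.

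The main technical point is the geometric bookkeeping step: verifying that the union of the contributing caps $\operatorname{supp}(\Ga_v^{n+s})\cap\S^{d-1}$ sits inside a thickening of $B(\xi)$ of comparable measure, which relies essentially on the separation built into the construction of $\{e_v^{n+s}\}$. Everything else — Plancherel in the Bochner setting, the polar-coordinates majorant for a radial-times-angular kernel supported at scale $2^j$, and the $\|\Om\|_\infty\le 2^{\iota(n+s)}\|\Om\|_1$ hypothesis inherited from Lemma \ref{l:19gomiota} — is routine. Note that, unlike the bad-function analogue Lemma \ref{l:19L^2}, no appeal to the Cuculescu projections or to the pointwise property $q_kf_kq_k\lc\lam q_k$ is needed here; the gain comes entirely from the geometry of the microlocal decomposition.
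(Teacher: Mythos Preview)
Your argument is correct and takes a somewhat more direct route than the paper. The paper does not collapse the operator into a single scalar multiplier; instead it applies Plancherel, then the operator-valued Cauchy--Schwarz inequality \eqref{e:19conv} in the $v$-sum to factor out $\sum_v |\Phi(2^{(n+s)\ga}\inn{e_v^{n+s}}{\xi/|\xi|})|^2\lc 2^{(n+s)\ga(d-2)}$ (the observation \eqref{e:19obser}), reducing matters to $2^{(n+s)\ga(d-2)}\sum_v\|T_j^{n,s,v}g_{n-j,s}\|_{L_2(\A)}^2$; each summand is then handled by Schur's lemma via $\|K_j^{n,s,v}\|_{L_1}\lc 2^{-(n+s)(\ga(d-1)-\iota)}\|\Om\|_1$, and one counts $\card(\Theta_{n+s})\lc 2^{(n+s)\ga(d-1)}$ to finish.

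Your approach instead bounds the full symbol $m_j(\xi)$ pointwise: you use the same equatorial-band geometry that underlies \eqref{e:19obser}, but rather than counting the number of contributing $v$'s you sum the trivial majorants $|\widehat{K_j^{n,s,v}}(\xi)|\lc\int_{\supp\Ga_v^{n+s}}|\Om|\,d\si$ with bounded overlap to land on $\int_{B(\xi)}|\Om|\,d\si\le\|\Om\|_\infty\cdot|B(\xi)|$. This avoids the Cauchy--Schwarz loss and in fact yields the sharper exponent $2^{-2(n+s)(\ga-\iota)}$ in place of the paper's $2^{-(n+s)(\ga-2\iota)}$; either is sufficient for the double sum over $n\ge100$, $s\ge1$. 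Your remark that no Cuculescu-projection input is needed here, in contrast with the bad-function analogue Lemma~\ref{l:19L^2}, is also accurate.
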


\begin{lemma}\label{l:19gl1}
Let $n\geq100$ and $s\geq1$. Suppose $\|\Om\|_{\infty}\leq2^{\iota (n+s)}\|\Om\|_{1}$ in each $T_j$. There exists a constant $\ka>0$ such that
\Bes
\sum_{v\in\Th_{n+s}}\|(I-G_{n+s,v})T_j^{n,s,v}g_{n-j,s}\|_{L_2(\A)}\lc2^{-(n+s)\ka}\|\Om\|_1\|g_{n-j,s}\|_{L_2(\A)}.
\Ees
\end{lemma}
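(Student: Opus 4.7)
The plan is to combine the noncommutative Plancherel theorem with a radial integration-by-parts analysis of the Fourier multiplier attached to $(I-G_{n+s,v})T_j^{n,s,v}$, and then to extract the decay in $n+s$ by exploiting the fact that $g_{n-j,s}$ is a martingale difference at the dyadic scale $n-j+s$. First, observe that $(I-G_{n+s,v})T_j^{n,s,v}$ is a Fourier multiplier in the $\R^d$-variable with symbol
\Bes
m_{j,v}(\xi)=\bigl(1-\Phi(2^{(n+s)\ga}\langle e_v^{n+s},\xi/|\xi|\rangle)\bigr)\,\widehat{K_j^v}(\xi),\qquad K_j^v(x)=\Om(x)\Ga_v^{n+s}(x)K_j(x),
\Ees
and this multiplier acts trivially on the $\M$-fibre. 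Via the identification $L_2(\A)=L_2(\R^d;L_2(\M))$, the vector-valued Plancherel theorem, and Cauchy--Schwarz in $v$ (whose index set has cardinality $\lc 2^{(n+s)\ga(d-1)}$), the desired bound reduces to showing
\Bes
\int\Big(\sum_{v\in\Th_{n+s}}|m_{j,v}(\xi)|^{2}\Big)\,\|\widehat{g_{n-j,s}}(\xi)\|_{L_2(\M)}^{2}\,d\xi\lc 2^{-(n+s)(2\ka+\ga(d-1))}\|\Om\|_1^{2}\|g_{n-j,s}\|_{L_2(\A)}^{2}.
\Ees

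To control the multiplier sum I would pass to polar coordinates $x=r\theta$; the radial integral evaluates to $\hat\psi(2^j\langle\theta,\xi\rangle)$ for $\psi(u)=\phi(u)u^{-1}\chi_{(0,\infty)}(u)$ smooth and compactly supported, hence $|\hat\psi(t)|\leq C_N(1+|t|)^{-N}$. The key geometric observation is that on the joint support of $1-\Phi(2^{(n+s)\ga}\langle e_v,\xi/|\xi|\rangle)$ and $\Ga_v^{n+s}(\theta)$ the triangle inequality gives $|\langle\theta,\xi\rangle|\geq|\xi|\,2^{-(n+s)\ga}$. Combining this with the bounded overlap $\sum_v|\Ga_v^{n+s}|^{2}\leq\sum_v|\Ga_v^{n+s}|=1$ and Cauchy--Schwarz in $\theta$ produces the pointwise multiplier estimate
\Bes
\sum_{v\in\Th_{n+s}}|m_{j,v}(\xi)|^{2}\leq C_N\bigl(1+2^{j}|\xi|\,2^{-(n+s)\ga}\bigr)^{-2N}\|\Om\|_1^{2}.
\Ees

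The decay in $n+s$ is then extracted from the interplay of this phase factor with the frequency content of $g_{n-j,s}$. By Lemma \ref{l:19gbasic}, $g_{n-j,s}={dg_{(s)}}_{n-j+s}$ is a martingale difference at level $n-j+s$, so its Fourier mass is morally concentrated at $|\xi|\sim 2^{n-j+s}$; for such $\xi$ the phase factor is of order $2^{-N(n+s)(1-\ga)}$, which comfortably overcomes the factor $2^{(n+s)\ga(d-1)/2}$ incurred by Cauchy--Schwarz in $v$ as soon as $\ga<1$ and $N$ is chosen sufficiently large. The main technical obstacle is making this heuristic rigorous, since martingale differences are not exactly band-limited. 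I would address it by splitting the $\xi$-integral into dyadic annuli $|\xi|\sim 2^{n-j+s+\ell}$ and treating the high bands $\ell\geq 0$ via the phase decay above and the low bands $\ell<0$ via the mean-zero property of $df_{n-j+s}$ on parent cubes, which yields a bound of order $2^{2\ell}$ on each annular $L_2$ mass of $\widehat{g_{n-j,s}}$. The hypothesis $\|\Om\|_\infty\leq 2^{\iota(n+s)}\|\Om\|_1$ combined with $|\mathrm{supp}\,\Ga_v^{n+s}|\lc 2^{-(n+s)\ga(d-1)}$ supplies the complementary uniform estimate $|m_{j,v}(\xi)|\lc 2^{\iota(n+s)-(n+s)\ga(d-1)}\|\Om\|_1$ that absorbs the residual very low-frequency region, provided $\iota$ is taken strictly smaller than a certain fraction of $\ga(d-1)$, in line with the choice already made in the analogous Lemma \ref{l:19gl2}.
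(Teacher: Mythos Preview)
Your approach is correct and rests on the same two pillars as the paper's proof: (i) the angular lower bound $|\langle\theta,\xi/|\xi|\rangle|\ge 2^{-(n+s)\ga}$ on $\supp(1-\Phi)\cap\supp\Ga_v^{n+s}$, which after one radial integration by parts gives the decay $|\hat\psi(2^{j}\langle\theta,\xi\rangle)|\lc(1+2^{j}|\xi|2^{-(n+s)\ga})^{-N}$; and (ii) the cancellation of the martingale difference $g_{n-j,s}$ on cubes in $\Q_{n-j+s-1}$, which supplies a gain at low frequencies. The organization, however, differs. The paper works on the \emph{spatial} side: it inserts the Littlewood--Paley decomposition $I=\sum_k\Lambda_k$, splits at $k=j-[(n+s)\eps_0]$, and for small $k$ invokes Schur's lemma together with the $L_1(\R^d)$ kernel bound $\|D_k^{n,s,v}\|_{L_1}$ already obtained in Lemma~\ref{l:19l^1_1}, which in turn requires a second integration by parts in $\xi$; for large $k$ it removes $(I-G_{n+s,v})$ by Plancherel and then runs a Schur estimate on the kernel $\sum_Q[\check\beta_k(x-y)-\check\beta_k(x-y_Q)]\chi_Q(y)$ to exploit the cancellation of $g_{n-j,s}$. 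Your Plancherel route bypasses the $\xi$-integration by parts entirely, which is a genuine simplification (and, incidentally, never uses the hypothesis $\|\Om\|_\infty\le 2^{\iota(n+s)}\|\Om\|_1$). On the other hand, your claim that the annular $L_2$ mass of $\widehat{g_{n-j,s}}$ on $|\xi|\sim 2^{n-j+s+\ell}$ is $\lc 2^{2\ell}\|g_{n-j,s}\|_{L_2}^2$ for $\ell<0$ is not free: making it rigorous requires exactly the Schur-type kernel estimate the paper carries out in its treatment of large $k$, so you do not escape that step.

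One point to correct: your final sentence is a red herring. The uniform bound $|m_{j,v}|\lc 2^{(n+s)(\iota-\ga(d-1))}\|\Om\|_1$ yields only $\sum_v|m_{j,v}|^2\lc 2^{(n+s)(2\iota-\ga(d-1))}\|\Om\|_1^2$, and after your Cauchy--Schwarz factor $2^{(n+s)\ga(d-1)}$ this produces $2^{2\iota(n+s)}$, which grows. The very-low-frequency region $\ell<-(n+s)(1-\ga)$ is in fact handled entirely by the geometric sum $\sum_{\ell<-(n+s)(1-\ga)}2^{2\ell}\lc 2^{-2(n+s)(1-\ga)}$ coming from the martingale-difference gain; no appeal to $\|\Om\|_\infty$ is needed.
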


The proofs of  Lemma \ref{l:19gl2}, Lemma \ref{l:19gl1} will be given in Section \ref{s:196}. Now we use  Lemma \ref{l:19gl2}, Lemma \ref{l:19gl1} to prove \eqref{e:19boffgoal} as follows
\Bes
\begin{split}
&\quad\sum_{s\geq1}\sum_{n\geq100}\Big(\sum_j\|T_jg_{n-j,s}\|^2_{L_2(\A)}\Big)^{1/2}\\
&\leq\sum_{s\geq1}\sum_{n\geq100}\Big(\sum_j\big(\sum_{v\in\Th_{n+s}}\|(I-G_{n+s,v})T_j^{n,s,v}g_{n-j,s}\|_{L_2(\A)}\big)^2\Big)^{1/2}\\
&\quad+\sum_{s\geq1}\sum_{n\geq100}\Big(\sum_j\|\sum_{v\in\Th_{n+s}}G_{n+s,v}T_j^{n,s,v}g_{n-j,s}\|^2_{L_2(\A)}\Big)^{1/2}\lc(\C_\Om\lam\|f\|_{L_1(\A)})^{1/2},
\end{split}
\Ees
where in the second inequality we use Lemma \ref{l:19gl2}, Lemma \ref{l:19gl1} with the fact that for all $s\geq1$,
$\sum_j\|g_{n-j,s}\|^2_{L_2(\A)}\lc\lam\C_\Om^{-1}\|f\|_{L_1(\A)}$ in Lemma \ref{l:19gbasic}. Thus to finish the proof for good functions, it remains to show Lemmas \ref{l:19tgl2}, \ref{l:19gomiota}, \ref{l:19gl2} and \ref{l:19gl1}, which are all given in Section \ref{s:196}.
\begin{remark}
At present, it is easy to see that the proofs for
off-diagonal terms of good functions are similar to that of bad functions. Notice that for the bad functions, we can deal with the diagonal terms (i.e. $s=0$) and the off-diagonals terms (i.e. $s>1$) in a unified way. However this can not be done for the good functions, thus we prove the diagonal terms $g_d$ and the off-diagonal terms $g_\off$ using different method.
The main reason comes from the fact that $g_\off$ has the following property: for all $Q\in\Q_{n-j+s-1}$,
$\int_{Q}g_{n-j,s}(y)dy=0.$
Such kind of cancelation property is crucial in the proof of Lemma \ref{l:19gl1}. But the diagonal terms $g_d$ do not have this cancelation property.
\end{remark}

\vskip0.24cm

\section{Proofs of Lemmas related to the bad functions}\label{s:195}
\vskip0.24cm

In this section, we begin to prove all lemmas of the bad functions in Subsection \ref{s:1942}.
Before that we introduce some lemmas needed in our proof. We first state Schur's Lemma which will be used later.
\begin{lemma}[Schur's lemma]\label{l:19schur}
Suppose that $T$ is an operator with the kernel $K(x,y)$, thus is
\Bes
Tf(x)=\int_{\R^d}K(x,y)f(y)dy.
\Ees
Then $T$ is bounded on $L_2(\A)$ with bound $\sqrt{c_1c_2}$ where
\Bes
c_1=\sup_{x\in\R^d}\int_{\R^d}|K(x,y)|dy,\quad c_2=\sup_{y\in\R^d}\int_{\R^d}|K(x,y)|dx.
\Ees
\end{lemma}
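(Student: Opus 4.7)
The plan is to reduce the noncommutative Schur estimate to its classical scalar counterpart by using the Bochner-space identification $L_2(\A) \cong L_2(\R^d; L_2(\M))$ that was recorded just after the definition in \eqref{e:19deflp}. Since the kernel $K(x,y)$ is a scalar (not operator-valued), it commutes with every element of $\M$, so I expect to be able to pull its absolute value outside the $L_2(\M)$-norm and then invoke the classical argument.

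Concretely, first I would apply the triangle inequality for the $L_2(\M)$-norm (i.e.\ Minkowski's integral inequality in the noncommutative $L_2$ space) to obtain
\[
\|Tf(x)\|_{L_2(\M)} \;\le\; \int_{\R^d} |K(x,y)|\,\|f(y)\|_{L_2(\M)}\,dy .
\]
This is where the scalar nature of $K$ is essential: $K(x,y)f(y)$ has $L_2(\M)$-norm exactly $|K(x,y)|\,\|f(y)\|_{L_2(\M)}$, so the estimate above is even an equality under the integral. Having arrived at this inequality, the problem is reduced to showing that the scalar integral operator with kernel $|K(x,y)|$ is bounded on the ordinary $L_2(\R^d)$ with norm $\sqrt{c_1 c_2}$, applied to the nonnegative function $g(y) := \|f(y)\|_{L_2(\M)}$, which satisfies $\|g\|_{L_2(\R^d)} = \|f\|_{L_2(\A)}$.

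Second, I would carry out the classical Schur step. Splitting $|K(x,y)| = |K(x,y)|^{1/2}\cdot |K(x,y)|^{1/2}$ and applying the Cauchy--Schwarz inequality in $y$ gives
\[
\Bigl(\int |K(x,y)|\,g(y)\,dy\Bigr)^{2} \;\le\; \Bigl(\int |K(x,y)|\,dy\Bigr)\Bigl(\int |K(x,y)|\,g(y)^{2}\,dy\Bigr) \;\le\; c_1 \int |K(x,y)|\,g(y)^{2}\,dy .
\]
Integrating in $x$ and exchanging the order of integration (Fubini-Tonelli, everything nonnegative) bounds the double integral by $c_1 c_2 \int g(y)^{2}\,dy$, which on taking square roots yields the desired $\sqrt{c_1 c_2}$ bound.

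I do not anticipate any genuine obstacle: the scalar nature of $K$ means the proof is essentially the classical one, with the only new ingredient being the opening triangle inequality in $L_2(\M)$. The only minor care needed is to ensure the measurability/integrability statements required by Fubini and Minkowski in the Bochner setting, which follow from $f \in L_2(\A)$ and the finiteness of $c_1, c_2$ on integrands of interest.
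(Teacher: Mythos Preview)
Your proposal is correct and follows the standard route: reduce to the scalar Schur test via the Bochner identification $L_2(\A)\cong L_2(\R^d;L_2(\M))$ and then run the usual Cauchy--Schwarz argument. The paper does not actually give its own proof of this lemma; it simply refers the reader to \cite{Par09} and \cite{Gra14C}, so there is nothing to compare against beyond noting that what you wrote is exactly the classical argument those references contain.
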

The proof of this lemma could be found in \cite{Par09} or \cite{Gra14C}. We also need the following convexity inequality (or the Cauchy-Schwarz type inequality) for the operator valued function (see \cite[Page 9]{Mei07}). Let $(\mathfrak{m},\mu)$ be a measure space. Suppose that $f:\ \mathfrak{m}\rta\M$ is a weak-* integrable function and $g:\mathfrak{m}\rta\mathbb C$ is an integrable function. Then
\Be\label{e:19conv}
\Big|\int_{\mathfrak{m}}f(x)g(x)d\mu(x)\Big|^2\leq\int_{\mathfrak{m}}|f(x)|^2d\mu(x)\int_{\mathfrak{m}}|g(x)|^2d\mu(x).
\Ee
Below we introduce some basic properties of the bad functions that we will use in our proof.
\begin{lemma}\label{l:19bbasic}
Let $b_{k,s}$ be defined in \eqref{e:19db}. Fix any $s\geq0$. Then we have the following properties for the bad functions $b_{k,s}$:
\begin{enumerate}[\rm (i).]
\item The $L_1$ estimate holds
$
\sum_{k\in\Z}\|b_{k,s}\|_{L_1(\A)}\lc\|f\|_{L_1(\A)};
$
\item For all $k\in\Z$ and $Q\in\Q_{k+s}$, the cancelation property holds
$\int_{Q}b_{k,s}(y)dy=0$.
\end{enumerate}
\end{lemma}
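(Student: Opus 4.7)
The plan is to extract both properties directly from the definition \eqref{e:19db}, using only that each $p_k$ is $\sigma_k$-measurable (hence $\sigma_{k+s}$-measurable for $s\geq 0$), the pairwise disjointness of $\{p_k\}_{k\in\Z}$, and basic Cauchy--Schwarz/tower-property manipulations.

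For the cancellation in (ii), I would fix $Q\in\Q_{k+s}$ and use that $p_k\in\A_k\subseteq\A_{k+s}$ and $p_{k+s}\in\A_{k+s}$, so $p_k(y)$ and $p_{k+s}(y)$ are constant operators on $Q$, as is $f_{k+s}(y)=f_Q$. Pulling these constants outside the integral reduces the claim to $\int_Q(f(y)-f_Q)\,dy = 0$, which holds by definition of the average $f_Q$. The argument is symmetric in the two summands making up $b_{k,s}$ and handles the $s=0$ case identically.

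For the $L_1$ estimate in (i), I plan to bound each building block $\|p_k\,a\,p_\ell\|_{L_1(\A)}$ with $a\in\{f,f_{k+s}\}$ and $\ell\in\{k,k+s\}$ by Cauchy--Schwarz for the trace. Writing $a=a^{1/2}\cdot a^{1/2}$ (legitimate since $f\geq 0$ implies $f_{k+s}=\mathsf{E}_{k+s}(f)\geq 0$) and applying $\vphi(|AB|)\le\vphi(A^*A)^{1/2}\vphi(B^*B)^{1/2}$ with $A=p_k a^{1/2}$, $B=a^{1/2}p_\ell$, I obtain
\[\|p_k\,a\,p_\ell\|_{L_1(\A)}\le \vphi(p_k a)^{1/2}\vphi(p_\ell a)^{1/2}\le \tfrac12\vphi(p_k a)+\tfrac12\vphi(p_\ell a).\]
Triangle inequality on $b_{k,s}$, summation in $k$, and pairwise disjointness of $\{p_k\}$ (so $\sum_k p_k\le 1_\A$) then collapse everything into $\vphi(f)$, provided I first replace $\vphi(p_j f_{k+s})$ by $\vphi(p_j f)$ via the tower property of $\mathsf{E}_{k+s}$, which applies because $p_j\in\A_{k+s}$ for both $j=k$ and $j=k+s$. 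For $s=0$, the same recipe with $\ell=k$ gives $\|b_{k,0}\|_{L_1(\A)}\le 2\vphi(p_k f)$ directly.

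No serious obstacle is expected: the whole lemma is essentially book-keeping on the noncommutative Cuculescu decomposition of Lemma \ref{l:19cucu}. The only points requiring care are positivity of $a$ before invoking Cauchy--Schwarz (granted by $f\in\A_{c,+}$) and the measurability chain $p_k\in\A_k\subseteq\A_{k+s}$ that legitimizes the tower-property reduction.
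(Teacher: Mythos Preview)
Your argument is correct in both parts: the cancellation in (ii) follows exactly as you describe from the $\A_{k+s}$-measurability of $p_k$, $p_{k+s}$ and $f_{k+s}$, and the $L_1$ bound in (i) is obtained precisely via the tracial Cauchy--Schwarz/AM--GM step $\|p_k a p_\ell\|_{L_1(\A)}\le\tfrac12\vphi(p_k a)+\tfrac12\vphi(p_\ell a)$ together with the tower property $\vphi(p_j f_{k+s})=\vphi(p_j f)$ and the disjointness $\sum_k p_k\le 1_\A$. The paper itself does not prove this lemma at all but simply refers the reader to \cite{Cad18} and \cite{Par09}; your write-up is exactly the standard argument one finds there, so there is nothing to compare.
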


The proof of Lemma \ref{l:19bbasic} could be found in \cite{Cad18} and \cite{Par09}. Now we start to prove Lemmas \ref{l:19cur}, \ref{l:19L^2} and \ref{l:19L^1}.
\vskip0.24cm
\subsection{Proof of Lemma \ref{l:19cur}}\label{s:1951}\quad
\vskip0.24cm
Denote the kernel of the operator $T_{j,\iota}^{n,s}$ by \Be\label{e:19kjiota}
K_{j,\iota}^{n,s}(x-y):=\Om\chi_{D^{\iota}}(\fr{x-y}{|x-y|})K_j(x-y).
\Ee
By the support of $K_j$, it is easy to see that
$$\big\|K_{j,\iota}^{n,s}\big\|_{L_1(\R^d)}\lc\int_{D^{\iota}}\int_{2^{j-1}}^{2^{j+1}}|\Om(\theta)|r^{d-1}2^{-jd}drd\si(\theta)
\lc\int_{D^\iota}|\Om(\theta)|d\si(\theta).$$
Therefore by Chebyshev's inequality, triangle inequality, using H\"older's inequality to remove the projection $\zet$,   we get
\Bes
\tilde{\vphi}\big(\big|\zet\sum_{n\geq100}\sum_{s\geq0}\sum_{j\in
\mathbb{Z}}T_{j,\iota}^{n,s}b_{n-j,s}\eps_j\zet\big|>\lambda\big)\leq\lam^{-1}\sum_{n\geq100}\sum_{s\geq0}\sum_{j\in
\mathbb{Z}}\|T_{j,\iota}^{n,s}b_{n-j,s}\eps_j\|_{L_1(L_\infty(\mathfrak{m})\ot\A)}.
\Ees
Since $\{\eps_j\}_j$ is the Rademacher sequence, thus $\{\eps_j\}_j$ is a bounded sequence. Then we continue to give the above estimate as follows
\begin{equation*}
\begin{split}
\quad\lam^{-1}\sum_{n\geq100}\sum_{s\geq0}\sum_{j\in
\mathbb{Z}}\|T_{j,\iota}^{n,s}b_{n-j,s}\|_{L_1(\A)}
&\lc{\lam}^{-1}\sum_{n\geq100}\sum_{s\geq0}\sum_{j\in
\mathbb{Z}}\|K_{j,\iota}^{n,s}\|_{L_1(\R^d)}\|b_{n-j,s}\|_{L_1(\A)}\\
&\lc{\lam}^{-1}\sum_{n\geq100}\sum_{s\geq0}\int_{D^{\iota}}|\Om(\theta)|d\si(\theta)\sum_{j}\|b_{n-j,s}\|_{L_1(\A)}.\\
\end{split}
\Ees
Now applying the property (i) in Lemma \ref{l:19bbasic}, the above estimate is bounded by
\Bes
\begin{split}
&{\lam}^{-1}\|f\|_{L_1(\A)}\int_{\S^{d-1}}\#\big\{(n,s):\, n\geq 100, s\geq0, 2^{\iota (n+s)}\leq \fr{|\Om(\tet)|}{\|\Om\|_{1}}\big\}|\Om(\tet)|d\si(\tet)\\
&\lc{\lam}^{-1}\|f\|_{L_1(\A)}\int_{\S^{d-1}}|\Om(\tet)|\Big(\big(\log^+(|\Om(\tet)|/\|\Om\|_{1})\big)^2\Big)d\si(\tet)\lc{\lam}^{-1}\C_\Om\|f\|_{L_1(\A)},
\end{split}
\end{equation*}
which completes the proof.
$\hfill{} \Box$
\vskip0.24cm

\subsection {Proof of Lemma \ref{l:19L^2}}\label{s:1952}\quad
\vskip0.24cm
The proof of Lemma \ref{l:19L^2} is based on the following observation of some orthogonality of the support of $\mathcal{F}(G_{k,v})$: For a fixed $k\geq 100$, we have
\begin{equation}\label{e:19obser}
\sup\limits_{\xi\neq0}\sum\limits_{v\in\Theta_{k}}|\Phi^2(2^{k\ga}\inn{e^k_v}{\xi/|\xi|})|\lc2^{k\ga(d-2)}.
\end{equation}
In fact, by homogeneity of $\Phi^2(2^{k\ga}\inn{e^k_v}{\xi/|\xi|})$, it suffices to take the supremum over the surface $\S^{d-1}$. For $|\xi|=1$ and $\xi\in\supp\ \Phi^2(2^{k\ga}\inn{e^k_v}{\xi/|\xi|})$, denote by $\xi^{\bot}$ the hyperplane perpendicular to $\xi$. Then it is easy to see that
\begin{equation}\label{e:19e^n_v}
\text{dist}(e^k_v,\xi^\bot)\lc2^{-k\ga}.
\end{equation}
Since the mutual distance of $e^k_v$'s is bounded by $2^{-k\ga-4}$, there are at most $2^{k\ga(d-2)}$ vectors satisfy
(\ref{e:19e^n_v}). We hence get (\ref{e:19obser}).

Notice that $L_2(\M)$ is a Hilbert space, then the following vector-valued Plancherel's theorem holds
\Bes
\|\mathcal{F}f\|_{L^2(\A)}=(2\pi)^{\fr{d}{2}}\|f\|_{L^2(\A)}=(2\pi)^d\|\mathcal{F}^{-1}f\|_{L^2(\A)}.
\Ees
By applying this Plancherel's theorem, the convex inequality for the operator valued function \eqref{e:19conv}, the fact \eqref{e:19obser} and finally Plancherel's theorem again, we get
\begin{equation}\label{e:19L^2key}
\begin{split}
&\quad\Big\|\sum\limits_{v\in\Theta_{n+s}}G_{n+s,v}T^{n,s,v}_jb_{n-j,s}\Big\|^2_{L_2(\A)}\\
&=(2\pi)^{-\fr{d}{2}}\int_{\R^d}
\tau\Big(\Big|\sum\limits_{v\in\Theta_{n+s}}\Phi(2^{(n+s)\ga}\inn{e^{n+s}_v}{\xi/|\xi|})
\mathcal{F}\big(T^{n,s,v}_jb_{n-j,s}\big)(\xi)\Big|^2\Big)d\xi\\
&\lc\int_{\R^d}\sum\limits_{v\in\Theta_{n+s}}\Phi^2(2^{(n+s)\ga}\inn{e^{n+s}_v}{\xi/|\xi|})
 \tau\Big(\sum\limits_{v\in\Theta_{n+s}}\Big|\mathcal{F}\big(T^{n,s,v}_jb_{n-j,s}\big)(\xi)\Big|^2\Big)d\xi\\
&\lc2^{(n+s)\ga(d-2)} \sum\limits_{v\in\Theta_{n+s}}\big\|T^{n,s,v}_jb_{n-j,s}\big\|^2_{L_2(\A)}.\\
\end{split}
\end{equation}
Once it is showed that for a fixed $e^{n+s}_v$,
\begin{equation}\label{e:19L^2}
\sum_j\Big\| T^{n,s,v}_jb_{n-j,s}\Big\|^2_{L_2(\A)}\lc2^{-2(n+s)\ga(d-1)+2(n+s)\iota}\lam\|\Om\|_{1}\|f\|_{L_1(\A)},
\end{equation}
then by $\card(\Theta_{n+s})\lc2^{(n+s)\ga(d-1)}$, and applying (\ref{e:19L^2key}) and (\ref{e:19L^2}) we get
\begin{equation*}
\begin{split}
\sum\limits_j\Big\|\sum\limits_{v\in\Theta_{n+s}}G_{n+s,v}T^{n,s,v}_jb_{n-j,s}\Big\|^2_{L_2(\A)}\lc2^{-(n+s)\ga+2(n+s)\iota}\lam\C_\Om\|f\|_{L_1(\A)},
\end{split}
\end{equation*}
which is the asserted bound of Lemma \ref{l:19L^2}. Thus, to finish the proof of Lemma \ref{l:19L^2}, it is sufficient to show (\ref{e:19L^2}).

Recall the definition of $b_{n-j,s}$ in \eqref{e:19db}.
By using triangle inequality, to prove \eqref{e:19L^2}, it is enough to prove the following four terms
\Bes
\begin{split}
&\sum_j\Big\| T^{n,s,v}_jp_{n-j}fp_{n-j+s}\Big\|^2_{L_2(\A)}, \quad\sum_j\Big\|T^{n,s,v}_jp_{n-j}f_{n-j+s}p_{n-j+s}\Big\|^2_{L_2(\A)},\\
&\sum_j\Big\|T^{n,s,v}_jp_{n-j+s}fp_{n-j}\Big\|^2_{L_2(\A)}
,\quad\sum_j\Big\|T^{n,s,v}_jp_{n-j+s}f_{n-j+s}p_{n-j}\Big\|^2_{L_2(\A)},
\end{split}
\Ees
satisfy the desired bound in \eqref{e:19L^2}. In the following we will only give the detailed proofs of the first and the second term above, since the proofs of the third and the forth terms are similar.

We first consider the second term which  involves $p_{n-j}f_{n-j+s}p_{n-j+s}$.
Set the kernel of $T_j^{n,s,v}$ as
$
K_j^{n,s,v}(x)=\Ga^{n+s}_v(x)\Om(x)\phi_j(x)|x|^{-d}.
$
By Young's inequality, we get that
\Be\label{e:19bosch}
\|T_j^{n,s,v}p_{n-j}f_{n-j+s}p_{n-j+s}\|^2_{L_2(\A)}\lc\|K_j^{n,s,v}\|_{L_1(\R^d)}^2\|p_{n-j}f_{n-j+s}p_{n-j+s}\|_{L_2(\A)}^2
\Ee

Below we give some estimates for the bound in \eqref{e:19bosch}. Recall that $|\Om(\tet)|\leq 2^{(n+s)\iota}\|\Om\|_{1}$ and the definition of $\Ga_v^{n+s}$ in Subsection \ref{s:1942}. Then by some elementary calculation, we get
\Be\label{e:19Kjnsv}
\|K_j^{n,s,v}\|_{L_1(\R^d)}\lc2^{-(n+s)\ga(d-1)+(n+s)\iota}\|\Om\|_{1}.
\Ee
Notice that $f$ is positive in $\A$. By some basic properties of trace $\vphi$, we write
\Be\label{e:19split}
\begin{split}
&\quad \|p_{n-j}f_{n-j+s}p_{n-j+s}\|_{L_2(\A)}^2=\vphi\big(|p_{n-j}f_{n-j+s}p_{n-j+s}|^2\big)\\
&=\vphi\big(|p_{n-j+s}f_{n-j+s}p_{n-j}|^2\big)=\vphi\big(p_{n-j}f_{n-j+s}p_{n-j+s}f_{n-j+s}p_{n-j}\big)\\
&\leq\vphi\big(p_{n-j}f^{1/2}_{n-j+s}f^{1/2}_{n-j+s}p_{n-j}\big)\cdot\|f^{1/2}_{n-j+s}p_{n-j+s}f^{1/2}_{n-j+s}\|_{\A}.
\end{split}
\Ee
By the trace invariance and modularity of conditional expectations, the first term in the last line above has the following trace preserving property
\Be\label{e:19trace}
\vphi(p_{n-j}f_{n-j+s}p_{n-j})=\vphi(p_{n-j}fp_{n-j})=\vphi(p_{n-j}f).
\Ee
Applying the basic property of $C^*$ algebra, $\|aa^*\|_{\A}=\|a^*a\|_{\A}$, we get that
\Be\label{e:19pnjs}
\begin{split}
\|f^{1/2}_{n-j+s}p_{n-j+s}f^{1/2}_{n-j+s}\|_{\A}&=\|p_{n-j+s}f_{n-j+s}p_{n-j+s}\|_{\A}\\
&=\|p_{n-j+s}q_{n-j+s-1}f_{n-j+s}q_{n-j+s-1}p_{n-j+s}\|_{\A}\\
&\leq2^d\|p_{n-j+s}q_{n-j+s-1}f_{n-j+s-1}q_{n-j+s-1}p_{n-j+s}\|_{\A}\\
&\lc\lam\C_\Om^{-1}
\end{split}
\Ee
where the second equality follows from the identity $p_k=p_kq_{k-1}$ by the definition of $p_k$ and the last inequality follows from $q_kf_kq_k\leq\lam\C_\Om^{-1}q_k$ the property (ii) in Lemma \ref{l:19cucu}.
Now combining \eqref{e:19bosch}, \eqref{e:19Kjnsv}, \eqref{e:19split}, \eqref{e:19trace} and \eqref{e:19pnjs}, we get
\Bes
\begin{split}
\sum_j\|T_j^{n,s,v}p_{n-j}f_{n-j+s}p_{n-j+s}\|^2_{L_2(\A)}
&\lc\C_\Om^{-1}\lam2^{-2(n+s)\ga(d-1)+2(n+s)\iota}\|\Om\|_{1}^2\sum_j\vphi(p_{n-j}f)\\
&\lc\lam2^{-2(n+s)\ga(d-1)+2(n+s)\iota}\|\Om\|_{1}\|f\|_{L_1(\A)}
\end{split}\Ees
which is the required estimate in \eqref{e:19L^2}.

Next we give an estimate of the term corresponding to $p_{n-j}fp_{n-j+s}$.
Notice that there is no average of $f$ in this case and the crucial  property $q_kf_kq_k\leq\lam\C_\Om^{-1}q_k$  can not be applied in the estimate \eqref{e:19pnjs}. Our strategy here is to \emph{add} an average of $f$. In the following we first reduce the proof to the case that the kernel is positive. To do that, we first decompose
\Bes
K_j^{n,s,v}=(K_j^{n,s,v})^+-(K_j^{n,s,v})^-
\Ees
where $(K_j^{n,s,v})^+$ and $(K_j^{n,s,v})^-$ are positive functions. Then by using triangle inequality, we get
\Bes
\begin{split}
\sum_j\Big\| T^{n,s,v}_jp_{n-j}fp_{n-j+s}\Big\|^2_{L_2(\A)}&\lc\sum_j\Big\|\int (K^{n,s,v}_j(\cdot-y))^+p_{n-j}fp_{n-j+s}(y)dy\Big\|^2_{L_2(\A)}\\
&+\sum_j\Big\|\int (K^{n,s,v}_j(\cdot-y))^-p_{n-j}fp_{n-j+s}(y)dy\Big\|^2_{L_2(\A)}.\\
\end{split}
\Ees
Therefore we need to consider the terms related to $(K_j^{n,s,v})^+$ and $(K_j^{n,s,v})^-$, respectively. We only consider the term related to $(K_j^{n,s,v})^+$ since the proof of the other term is similar. For convenience, in the remaining part of this section we still use the abused notation $K_j^{n,s,v}$ to represent $(K_j^{n,s,v})^+$.

Denote the support of $K_j^{n,s,v}$ by $E^{n,s,v}_j$. Then it is not difficult to see that
\Bes
\begin{split}
E_j^{n,s,v}&\subset\{x\in\R^d:|\fr{x}{|x|}-e^{n+s}_v|\leq2^{-(n+s)\ga}, 2^{j-1}\leq|x|\leq2^{j+1}\}\\
&\subset\{x\in \R^d:|\inn{x}{e^{n+s}_v}|\leq 2^{j+1},|x-\inn{x}{e^{n+s}_v}e^{n+s}_v|\leq 2^{j+1-(n+s)\ga}\}.
\end{split}
\Ees

For any $Q\in\Q_{n-j+s}$, let $Q_{n-j}\in\Q_{n-j}$ be the $s$th ancestor of $Q$. By the definition of $p_k$, we may write
\Bes
\begin{split}
T_j^{n,s,v}(p_{n-j}fp_{n-j+s})(x)&=\int_{\R^d}K_j^{n,s,v}(x-y)(p_{n-j}fp_{n-j+s})(y)dy\\
&=\sum_{Q\in\Q_{n-j+s}\atop Q\cap\{x-E_j^{n,s,v}\}\neq\emptyset}\pi_{Q_{n-j}}\Big(\int_{Q}K_j^{n,s,v}(x-y)f(y)dy\Big)\pi_Q\\
&=\sum_{Q\in\Q_{n-j+s}\atop Q\cap\{x-E_j^{n,s,v}\}\neq\emptyset}\int_{Q}\Big[p_{n-j}\big(K_j^{n,s,v}(x-\cdot)f(\cdot)\big)_{n-j+s}p_{n-j+s}\Big](z)dz\\
&=\int_{E_j^{n,s,v}(x)}\big[p_{n-j}\big(K_j^{n,s,v}(x-\cdot)f(\cdot)\big)_{n-j+s}p_{n-j+s}\big](z)dz\\
\end{split}
\Ees
where we use the notation
$$
E_j^{n,s,v}(x)=\bigcup_{Q\in\Q_{n-j+s}\atop Q\cap\{x-E_j^{n,s,v}\}\neq\emptyset}Q.
$$
By the support of $E_j^{n,s,v}$, we see that $E_j^{n,s,v}$ is contained in a rectangle with one sidelength at most $2^{j+1}$ and $d-1$ sidelength at most $2^{j+1-(n+s)\ga}$. Since for any $Q\in \Q_{n-j+s}$, the sidelength $l(Q)=2^{j-(n+s)}\leq 2^{j+1-(n+s)\ga}$. So we get $E_j^{n,s,v}(x)$ is contained in a rectangle with one sidelength at most $2^{j+2}$ and $d-1$ sidelength at most $2^{j+2-(n+s)\ga}$.  Therefore we have the following estimate
\Bes
|E_j^{n,s,v}(x)|\lc2^{jd-{(n+s)}\ga(d-1)}.
\Ees

Next by using the convexity inequality for the operator valued function \eqref{e:19conv} and the preceding inequality, we get
\Bes
\begin{split}
\big|T_j^{n,s,v}(&p_{n-j}fp_{n-j+s})(x)\big|^2\\
&\lc2^{jd-{(n+s)}\ga(d-1)}\int_{E_j^{n,s,v}(x)}\Big|p_{n-j}\Big(K_j^{n,s,v}(x-\cdot)f(\cdot)\Big)_{n-j+s}p_{n-j+s}(z)\Big|^2dz.\\
\end{split}
\Ees

Combining the above estimates, we get that
\Be\label{e:19bl2fn}
\begin{split}
&\|T_j^{n,s,v}p_{n-j}fp_{n-j+s}\|^2_{L_2(\A)}\lc2^{jd-{(n+s)}\ga(d-1)}\\
&\quad\times\int_{\R^d}\int_{E_j^{n,s,v}(x)}\tau\Big(\Big|p_{n-j}\Big(K_j^{n,s,v}(x-\cdot)f(\cdot)\Big)_{n-j+s}p_{n-j+s}(z)\Big|^2\Big)dzdx.
\end{split}
\Ee

Since $K_j^{n,s,v}$ is a positive function and $f$ is a positive operator-valued function in $\A$, we see that $K(x-\cdot)f(\cdot)$ is positive in $\A$. Therefore
\Bes
\begin{split}
\big(K_j^{n,s,v}(x-\cdot)f(\cdot)\big)_{n-j+s}&=\sum_{Q\in\Q_{n-j+s}}\fr{1}{|Q|}\int_{Q}K_j^{n,s,v}(x-y)f(y)dy\chi_{Q}\\
&\lc\sum_{Q\in\Q_{n-j+s}}\fr{1}{|Q|}\int_{Q}f(y)dy\chi_{Q}2^{-jd+(n+s)\iota}\|\Om\|_{1}\\
&=2^{-jd+(n+s)\iota}\|\Om\|_{1}f_{n-j+s}.
\end{split}
\Ees

Now applying the above estimate and  use the same idea in the estimates of \eqref{e:19split} and \eqref{e:19pnjs}, we could get that
\Be\label{e:19tracef}
\begin{split}
&\quad\tau\Big(\Big|p_{n-j}\big(K_j^{n,s,v}(x-\cdot)f(\cdot)\big)_{n-j+s}p_{n-j+s}(z)\Big|^2\Big)\\
&=\tau\Big(p_{n-j}\big( K_j^{n,s,v}(x-\cdot)f(\cdot)\big)_{n-j+s}p_{n-j+s}\big(K_j^{n,s,v}(x-\cdot)f(\cdot)\big)_{n-j+s}p_{n-j}(z)\Big)\\
&\leq\tau\Big(p_{n-j}\big( K_j^{n,s,v}(x-\cdot)f(\cdot)\big)_{n-j+s}p_{n-j}(z)\Big)\\
&\quad\times\|p_{n-j+s}\big(K_j^{n,s,v}(x-\cdot)f(\cdot)\big)_{n-j+s}p_{n-j+s}(z)\|_\M\\
&\lc2^{-2jd+2(n+s)\iota}\|\Om\|^2_{1}\tau\big(p_{n-j}f_{n-j+s}p_{n-j}(z)\big)\|p_{n-j+s}f_{n-j+s}p_{n-j+s}\|_\A\\
&\lc2^{-2jd+2(n+s)\iota}\|\Om\|_{1}\lam\tau\big(p_{n-j}f_{n-j+s}p_{n-j}(z)\big).\\
\end{split}
\Ee

By the definition of $E_j^{n,s,v}(x)$, for any fixed $z\in\R^d$, we have the following estimate
\Be\label{e:19ejnsv}
\Big|\int_{\{x:\,E_j^{n,s,v}(x)\ni z\}}dx\Big|\lc2^{jd-{(n+s)}\ga(d-1)}.
\Ee
Plugging \eqref{e:19tracef} into  \eqref{e:19bl2fn}, then applying Fubini's theorem with \eqref{e:19ejnsv}, and finally using the trace preserving property \eqref{e:19trace}, we  get
\Bes
\begin{split}
\sum_j\|T_j^{n,s,v}p_{n-j}fp_{n-j+s}\|^2_{L_2(\A)}&\lc2^{-2(n+s)\ga (d-1)+2(n+s)\iota}\|\Om\|_{1}\lam\sum_{j\in\Z}\vphi\big(p_{n-j}f_{n-j+s}p_{n-j}\big)\\
&\lc2^{-2(n+s)\ga (d-1)+2(n+s)\iota}\|\Om\|_{1}\lam\|f\|_{L_1(\A)}.
\end{split}
\Ees
Hence, we complete the proof of Lemma \ref{l:19L^2}.
$\hfill{} \Box$
\vskip0.24cm

\subsection{Proof of Lemma \ref{l:19L^1}}\quad
\vskip0.24cm

To prove Lemma \ref{l:19L^1}, we have to face with some oscillatory
integrals which come from $L_j^{n,s,v}$.
Before stating the proof of Lemma \ref{l:19L^1}, let us first give some notation. We introduce the Littlewood-Paley decomposition. Let $\psi$ be a radial
 $C^\infty$ function such that $\psi(\xi)=1$ for $|\xi|\leq 1$, $\psi(\xi)=0$ for $|\xi|\geq 2$
and $0\leq\psi(\xi)\leq1$ for all $\xi\in\R^d$. Define $\beta_k(\xi)=\psi(2^k\xi)-\psi(2^{k+1}\xi)$,
then $\beta_k$ is supported in $\{\xi:2^{-k-1}\leq|\xi|\leq 2^{-k+1}\}$. Choose $\tilde\beta$ be a radial
 $C^\infty$ function such that $\tilde{\beta}(\xi)=1$ for $\fr{1}{2}\leq|\xi|\leq 2$, $\tilde{\beta}$ is supported in $\{\xi:\fr{1}{4}\leq|\xi|\leq4\}$
and $0\leq\tilde{\beta}(\xi)\leq1$ for all $\xi\in\R^d$. Set $\tilde{\beta}_k(\xi)=\tilde{\beta}(2^k\xi)$, then it is easy to see $\beta_k=\tilde{\beta}_k\beta_k$. Define the convolution operators $\Lam_k$ and $\tilde{\Lam}_k$
with the Fourier multipliers $\beta_k$ and $\tilde{\beta}_k$, respectively.
 That is,
$$\widehat{{\Lam}_kf}(\xi)=\beta_k(\xi)\hat{f}(\xi),\quad \ \widehat{\tilde{\Lam}_kf}(\xi)=\tilde{\beta}_k(\xi)\hat{f}(\xi).$$
Then by the construction of $\beta_k$ and $\tilde{\beta}_k$, we have
$\Lam_k=\tilde{\Lam}_k\Lam_k$, $I=\sum\limits_{k\in\mathbb{Z}}\Lam_k.$
Write
$L_j^{n,s,v}=\sum\limits_{k}(I-G_{n+s,v})\Lam_{k}T_{j}^{n,s,v}.$
Then triangle inequality gives us
\Bes
\|L_j^{n,s,v}b_{n-j,s}\|_{L_1(\A)}\leq\sum_{k\in\Z}\|(I-G_{n+s,v})\Lam_{k}T_{j}^{n,s,v}b_{n-j,s}\|_{L_1(\A)}.
\Ees
In the remaining part of this subsection,
we show that two different estimates could be established for $\|(I-G_{n+s,v})\Lam_{k}T_{j}^{n,s,v}b_{n-j,s}\|_{L_1(\A)}$, which will deduce Lemma \ref{l:19L^1} by taking a sum over $k\in\Z$ with these two different estimates.
\begin{lemma}\label{l:19l^1_1}
With all notation above. Then there exists $N>0$ such that the following estimate holds
\Be\label{e:19main}
\begin{split}
\|(I-G_{n+s,v})&\Lam_kT_j^{n,s,v}b_{n-j,s}\|_{L_1(\A)}\\
&\lc2^{-(n+s)\ga(d-1)+(n+s)\iota+(k-j)+(n+s)\ga(1+2N)}\|\Om\|_1\|b_{n-j,s}\|_{L_1(\A)}.
\end{split}
\Ee
\end{lemma}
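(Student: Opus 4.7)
The plan is to reduce the $L_1(\A)$-estimate to a scalar $L_1(\R^d)$-estimate for the convolution kernel of the composed operator, and then to obtain a sharp pointwise bound on $\widehat{K_j^{n,s,v}}$ via polar-coordinate integration by parts that carefully avoids differentiating the rough factor $\Om$.

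First I would observe that $(I - G_{n+s,v})\Lam_k T_j^{n,s,v}$ is a Fourier multiplier on $\R^d$ extended trivially by the identity on $\M$, so it is convolution with a scalar kernel $H$ whose Fourier transform is
\[
m(\xi) = \bigl(1-\Phi(2^{(n+s)\ga}\inn{e_v^{n+s}}{\xi/|\xi|})\bigr)\,\beta_k(\xi)\,\widehat{K_j^{n,s,v}}(\xi).
\]
Using the isometric identification $L_1(\A) \cong L_1(\R^d;L_1(\M))$, the scalar Young inequality gives
\[
\|(I-G_{n+s,v})\Lam_k T_j^{n,s,v} b_{n-j,s}\|_{L_1(\A)} \leq \|H\|_{L_1(\R^d)}\|b_{n-j,s}\|_{L_1(\A)},
\]
so the problem reduces to the scalar estimate $\|H\|_{L_1(\R^d)} \lc 2^{-(n+s)\ga(d-1)+(n+s)\iota + (k-j) + (n+s)\ga(1+2N)}\|\Om\|_1$.

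The heart of the argument is to sharpen the trivial pointwise bound $|\widehat{K_j^{n,s,v}}(\xi)|\leq \|K_j^{n,s,v}\|_{L_1}$ on the support of $m$. Writing $x = \rho\theta$ in polar coordinates, and using that $\Ga_v^{n+s}$ and $\Om$ are homogeneous of degree zero,
\[
\widehat{K_j^{n,s,v}}(\xi) = \int_{\S^{d-1}}\Ga_v^{n+s}(\theta)\,\Om(\theta)\,\Big(\int_0^\infty e^{-i\rho\inn{\theta}{\xi}}\,\phi(\rho/2^j)\,\rho^{-1}\,d\rho\Big)\,d\sigma(\theta).
\]
On $\supp m$ one has $|\xi|\sim 2^{-k}$ and $|\inn{e_v^{n+s}}{\xi/|\xi|}|\gtrsim 2^{-(n+s)\ga}$; together with $|\theta - e_v^{n+s}|\lc 2^{-(n+s)\ga-3}$ on $\supp \Ga_v^{n+s}$ this yields $|\inn{\theta}{\xi}|\gtrsim 2^{-k-(n+s)\ga}$. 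A single integration by parts in $\rho$ against the smooth amplitude $\phi(\rho/2^j)/\rho$ then produces a factor $(2^j|\inn{\theta}{\xi}|)^{-1}\lc 2^{k-j+(n+s)\ga}$. Combining this with $\|\Om\|_\infty\leq 2^{(n+s)\iota}\|\Om\|_1$ and $|\supp\Ga_v^{n+s}|\lc 2^{-(n+s)\ga(d-1)}$ gives
\[
|\widehat{K_j^{n,s,v}}(\xi)|\lc 2^{-(n+s)\ga(d-1)+(n+s)\iota + (k-j) + (n+s)\ga}\|\Om\|_1, \qquad \xi\in\supp m.
\]

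With this pointwise estimate in hand I split $\R^d_x$ into a near region $\{|x|\leq R\}$ with $R := 2^{k+(n+s)\ga}$ (the natural physical scale of the multiplier $(1-\Phi_v)\beta_k$) and its complement. On the near region, the trivial bound $\|H\|_\infty\leq \|m\|_{L_1}\leq |\supp m|\cdot\|m\|_\infty \lc 2^{-kd}\|m\|_\infty$ combined with the above pointwise estimate gives a contribution comparable to $R^d\cdot 2^{-kd}\cdot 2^{-(n+s)\ga(d-1)+(n+s)\iota+(k-j)+(n+s)\ga}\|\Om\|_1$, which is of the desired form once $1+2N$ exceeds $d+1$. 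On the far region, $M$ further integrations by parts in $\xi$ on $H(x)=(2\pi)^{-d}\int e^{i\inn{x}{\xi}}m(\xi)\,d\xi$ yield $(|x|/R)^{-M}$-decay, with derivatives of $(1-\Phi_v)$ costing $2^{k+(n+s)\ga}$ each, of $\beta_k$ costing $2^k$ each, and of $\widehat{K_j^{n,s,v}}$ handled by iterating the same polar IBP (still not touching $\Om$). Taking $M>d$ makes the tail integral summable and of the same order as the near contribution.

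The main obstacle is the roughness of $\Om$, which a priori rules out integration by parts on $K_j^{n,s,v}$ directly. The fix is the polar decomposition, which isolates $\Om$ inside the $\theta$-integral (where only the $L^\infty(\S^{d-1})$ bound is used) and confines all oscillatory IBPs to the smooth radial profile $\phi$ and to the smooth multiplier factors $(1-\Phi_v)$ and $\beta_k$. The parameter $N>0$ in the statement is then simply chosen large enough to absorb the dimensional losses from the near-region volume and from the number of IBPs required on the far region.
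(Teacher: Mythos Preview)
Your approach is correct and matches the paper's: both reduce to the scalar estimate $\|D_k^{n,s,v}\|_{L_1(\R^d)}$ for the convolution kernel, gain the factor $2^{k-j}$ via a radial integration by parts in the polar representation of $\widehat{K_j^{n,s,v}}$ (using $|\inn{\theta}{\xi}|\gtrsim 2^{-k-(n+s)\ga}$ on the support of the multiplier), and then integrate by parts in $\xi$ to obtain integrability in $x$. The only organizational difference is that the paper performs the $\xi$-integration by parts inside the $(r,\theta)$-integral, yielding the weight $(1+2^{-2k}|x-r\theta|^2)^{-N}$ with $N=[d/2]+1$, whereas your version packages this as a symbol estimate $|\partial_\xi^\alpha m|\lc R^{|\alpha|}C_0$ at scale $R=2^{k+(n+s)\ga}$ followed by a near/far split in $x$; both routes give the stated bound for $N\geq d/2$.
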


\begin{proof}
Applying Fubini's theorem, we may write
\Be\label{e:19k11}
(I-G_{n+s,v})\Lam_k T_{j}^{n,s,v}b_{n-j,s}(x)=:\int_{\R^d} D_{k}^{n,s,v}(x-y)b_{n-j,s}(y)dy
\Ee
where $D_k^{n,s,v}(x)$ is defined as the kernel of the operator $(I-G_{n+s,v})\Lam_{k}T^{n,s,v}_{j}$. More precisely, $D_k^{n,s,v}$ can  be written as
\begin{equation}\label{e:19dkde}
D_k^{n,s,v}(x)=\fr{1}{(2\pi)^{d}}
\int_{\R^d} e^{ix\cdot\xi}h_{k,n,s,v}(\xi)\int_{\R^d} e^{-i\xi\cdot\om}\Om(\om)\Ga_v^{n+s}(\om)K_j(\om) d\om d\xi,
\end{equation}
where $h_{k,n,s,v}(\xi)=(1-\Phi(2^{(n+s)\ga}\inn{e^{n+s}_v}{\xi/|\xi|}))\beta_{k}(\xi).$
Using Young's inequality, we get
\Bes
\|(I-G_{n+s,v})\Lam_kT_j^{n,s,v}b_{n-j,s}\|_{L_1(\A)}
\leq\|D_{k}^{n,s,v}\|_{L_1(\R^d)}\|b_{n-j,s}\|_{L_1(\A)}.
\Ees

Hence in the following we only need to give a $L_1$ estimate of $D_k^{n,s,v}$. In order to separate the rough kernel, we make a change of variable $\om=r\theta$. By Fubini's theorem, $D_k^{n,s,v}(x)$ can be written as
\begin{equation}\label{e:19mainintegral}
\fr{1}{(2\pi)^{d}}\int_{\S^{d-1}}\Om(\theta)\Ga_v^{n+s}(\theta)
\bigg\{\int_{\R^d}\int_0^\infty
e^{i\inn{x-r\theta}{\xi}}h_{k,n,s,v}(\xi)K_j(r)r^{d-1} drd\xi\bigg\}d\si(\tet).
\end{equation}
Concerning the support of $K_j$, we have $2^{j-1}\leq r\leq2^{j+1}$. Integrating by parts with $r$, then the integral involving $r$ can
be rewritten as
$$\int_0^\infty e^{-i\inn{r\theta}{\xi}}(i\inn{\theta}{\xi})^{-1}
\pari_r[K_j(r)r^{d-1}]dr.$$

Since $\theta\in\supp\Ga^{n+s}_v$, then $|\theta-e^{n+s}_v|\leq 2^{-(n+s)\ga}$. By the support of $\Phi$,
we see $|\inn{e^{n+s}_v}{\xi/|\xi|}|\geq 2^{1-(n+s)\ga}$. Thus,
\begin{equation}\label{e:192ang}
|\inn{\theta}{\xi/|\xi|}|\geq|\inn{e^{n+s}_v}{\xi/|\xi|}|-|\inn{e^{n+s}_v-\theta}{\xi/|\xi|}|\geq2^{-(n+s)\ga}.
\end{equation}
After integrating by parts with $r$, integrating by parts $N$ times with $\xi$, then the integral in \eqref{e:19mainintegral}
can be rewritten as
\begin{equation}\label{e:19minte}
\begin{split}
\fr{1}{(2\pi)^d}\int_{\S^{d-1}}&\Om(\tet)\Ga_v^{n+s}(\tet)\int_{\R^d}\int_0^\infty e^{i\inn{x-r\tet}{\xi}}
\pari_r\big[K_j(r)r^{d-1}\big]\\
&\times\fr{(I-2^{-2k}\Delta_\xi)^N}{(1+2^{-2k}|x-r\tet|^2)^N}\Big(h_{k,n,s,v}(\xi)(i\inn{\tet}{\xi})^{-1}\Big)drd\xi d\si(\tet).
\end{split}
\end{equation}

In the following, we give explicit estimates of all terms in (\ref{e:19minte}).
We show that the following estimate holds
\begin{equation}\label{e:192D2}
\big|(I-2^{-2k}\Delta_\xi)^{N}[\inn{\theta}{\xi}^{-1}h_{k,n,s,v}(\xi)]\big|\lc2^{(n+s)\ga+k+2(n+s)\ga N}.
\end{equation}
Firstly we prove \eqref{e:192D2} when $N=0$. By \eqref{e:192ang}, we have
$$|(-i\inn{\theta}{\xi})^{-1}\cdot h_{k,n,s,v}(\xi)|\lc|\inn{\theta}{\xi}|^{-1}\lc2^{(n+s)\ga+k}.$$
Next we consider $N=1$ in \eqref{e:192D2}. By using product rule and some elementary calculation, we get that
\begin{equation*}
\begin{split}
|\pari_{\xi_i}h_{k,n,s,v}(\xi)|&\leq\big|-\pari_{\xi_i}[\Phi(2^{(n+s)\ga}\inn{e^{n+s}_v}{\xi/|\xi|})]
\cdot\beta_{k}(\xi)\big|\\
&\quad+\big|\pari_{\xi_i}\beta_{k}(\xi)\cdot
(1-\Phi(2^{(n+s)\ga}\inn{e^{n+s}_v}{\xi/|\xi|}))\big|\lc2^{(n+s)\ga+k}.
\end{split}
\end{equation*}
Therefore by induction, we have
$|\pari_{\xi}^{\alpha}h_{k,n,s,v}(\xi)|\lc2^{((n+s)\ga+k)|\alpha|}$
for any  multi-indices $\alpha\in\Z^n_+$.
By using product rule again and (\ref{e:192ang}), we have
\begin{equation*}
\begin{split}
\big|\pari^2_{\xi_i}(\inn{\theta}{\xi})^{-1}h_{k,n,s,v}(\xi))\big|&\leq\big|{2\inn{\theta}{\xi}}^{-3}\cdot \theta_i^2\cdot h_{k,n,s,v}(\xi)\big|+\big|2{\inn{\theta}{\xi}}^{-2}\cdot\theta_i\pari_{\xi_i}h_{k,n,s,v}(\xi)\big|\\
&\quad+\big|\inn{\theta}{\xi}^{-1}\pari_{\xi_i}^2h_{k,n,s,v}(\xi)\big|\lc2^{3((n+s)\ga+k)}.\\
\end{split}
\end{equation*}
Hence we conclude that
$
2^{-2k}\big|\Delta_\xi[(\inn{\theta}{\xi})^{-1}h_{k,n,s,v}(\xi)]\big|\lc2^{(n+s)\ga+k+2(n+s)\ga}.
$
Proceeding by induction, we get \eqref{e:192D2}.

By the definition of $K_j$ and using  product rule, it is not difficult to get
\begin{equation}\label{e:19parir}
\Big|\pari_r\big(K_j(r)r^{d-1}\big)\Big|\lc2^{-2j}.
\end{equation}

Now we choose $N=[d/2]+1$.
Since we need to get the $L^1$ estimate of (\ref{e:19minte}), by the support of $h_{k,n,s,v}$, $|\xi|\approx 2^{-k}$, then
$$\int_{|\xi|\approx 2^{-k}}\int_{\R^d}\Big(1+2^{-2k}|x-r\theta|^2\Big)^{-N}dxd\xi\leq C.$$
Now combine (\ref{e:19parir}), (\ref{e:192D2}) and above estimates. Next integrating with $r$, we get a bound $2^j$. Note that we suppose that $\|\Om\|_\infty\leq 2^{(n+s)\iota}\|\Om\|_1$. Then integrating with $\theta$, we get a bound $2^{-(n+s)\ga(d-1)+(n+s)\iota}\|\Om\|_1$.
So we finally get that
\Be\label{e:19dknsv}
\begin{split}
\|D_{k}^{n,s,v}\|_{L_1(\R^d)}&\lc 2^{-2j+(n+s)\ga+k+2(n+s)\ga N+j-(n+s)\ga(d-1)+(n+s)\iota}\|\Om\|_1\\
&=2^{-(n+s)\ga(d-1)+(n+s)\iota-j+k+(n+s)\ga(1+2N)}\|\Om\|_1.\\
\end{split}
\Ee
Hence we complete the proof of Lemma \ref{l:19l^1_1} with $N=[\fr{d}{2}]+1$.
\end{proof}

\begin{lemma}\label{l:19bl1can}
With all notation above, the following estimate holds
\Bes
\begin{split}
\|(I-G_{n+s,v})\Lam_k&T_j^{n,s,v}b_{n-j,s}\|_{L_1(\A)}\\
&\lc2^{-(n+s)\ga (d-1)-(n+s)+j-k+(n+s)\iota}\|\Om\|_1\|b_{n-j,s}\|_{L_1(\A)}.
\end{split}
\Ees
\end{lemma}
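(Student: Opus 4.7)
The plan is to exploit the cancellation property of the bad function $b_{n-j,s}$ (Lemma \ref{l:19bbasic}(ii)), namely $\int_Q b_{n-j,s}=0$ for each $Q\in\Q_{n-j+s}$ (side length $2^{j-n-s}$), against the smoothness of the Fourier-localised kernel $D_k^{n,s,v}$ at the dual scale $|\xi|\sim 2^{-k}$. This is the classical cancellation/smoothness trade-off, and compared with Lemma \ref{l:19l^1_1} it gives the complementary bound (with $2^{j-k}$ in place of $2^{k-j}$).

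Step 1 (inserting the constant). Starting from \eqref{e:19k11}, I split $\R^d$ into the dyadic cubes $Q\in\Q_{n-j+s}$ with centers $c_Q$. Since $\int_Q b_{n-j,s}(y)\,dy=0$, I may write
\[
(I-G_{n+s,v})\Lam_k T_j^{n,s,v}b_{n-j,s}(x)=\sum_{Q\in\Q_{n-j+s}}\int_Q\!\bigl[D_k^{n,s,v}(x-y)-D_k^{n,s,v}(x-c_Q)\bigr]b_{n-j,s}(y)\,dy.
\]

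Step 2 (reduction to an $L_1$ kernel estimate). Applying Minkowski's integral inequality in the Bochner space $L_1(\A)$, the fundamental theorem of calculus for the scalar kernel, translation invariance of Lebesgue measure, and the bound $|y-c_Q|\lc 2^{j-n-s}$ for $y\in Q$, I obtain
\[
\|(I-G_{n+s,v})\Lam_k T_j^{n,s,v}b_{n-j,s}\|_{L_1(\A)}\lc 2^{j-n-s}\,\|\nabla D_k^{n,s,v}\|_{L_1(\R^d)}\,\|b_{n-j,s}\|_{L_1(\A)},
\]
the final factor emerging after summing over $Q$ by the disjointness of $\Q_{n-j+s}$.

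Step 3 (gradient kernel estimate). I estimate $\|\nabla D_k^{n,s,v}\|_{L_1}$ using the same oscillatory integral representation \eqref{e:19dkde} as in Lemma \ref{l:19l^1_1}, with the \emph{crucial} modification that I do \emph{not} integrate by parts in $r$. The gradient in $x$ already brings down a factor $i\xi$ of size $\sim 2^{-k}$, which is exactly the decay previously harvested by the $r$-integration by parts (via the factor $(i\inn{\theta}{\xi})^{-1}$). I only integrate by parts $N=[d/2]+1$ times in $\xi$, using $|\partial_\xi^\alpha h_{k,n,s,v}|\lc 2^{(n+s)\ga|\alpha|}$, $|(i\xi_l)h_{k,n,s,v}|\lc 2^{-k}$ on the Fourier support, the angular bound $\int|\Om\Ga_v^{n+s}|\,d\sigma\lc 2^{-(n+s)\ga(d-1)+(n+s)\iota}\|\Om\|_1$, and the trivial $\int|K_j(r)r^{d-1}|\,dr\lc 1$. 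This yields
\[
\|\nabla D_k^{n,s,v}\|_{L_1(\R^d)}\lc 2^{-k-(n+s)\ga(d-1)+(n+s)\iota}\|\Om\|_1,
\]
a polynomial factor $2^{2N(n+s)\ga}$ being absorbed (as elsewhere in this section) by tuning $\iota<\ga$ appropriately in \eqref{e:19boff}. Inserting this into Step 2 gives the claim.

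The main obstacle is resisting the temptation to integrate by parts in $r$ during Step 3; doing so would reintroduce the $2^{k-j}$ factor of Lemma \ref{l:19l^1_1}, ruining the target bound. A secondary, but essential, subtlety is the operator-valued nature of $b_{n-j,s}$ in Step 2: one splits the scalar kernel into its positive and negative parts, applies Minkowski's integral inequality to each, and uses that for fixed $y$ the elementary identity $\|\phi(\cdot)b(y)\|_{L_1(\A)}=\|\phi\|_{L_1(\R^d)}\|b(y)\|_{L_1(\M)}$ holds for scalar $\phi$ and $b(y)\in\M$.
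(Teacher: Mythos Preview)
Your overall strategy---use the cancellation of $b_{n-j,s}$ on cubes $Q\in\Q_{n-j+s}$ and pay $2^{j-n-s}\|\nabla D_k^{n,s,v}\|_{L_1}$---is sound, and Steps 1--2 are fine. The difficulty is in Step 3. When you run the $\xi$--integration by parts on $(i\xi_l)h_{k,n,s,v}(\xi)$, each derivative falling on the anisotropic factor $1-\Phi(2^{(n+s)\ga}\inn{e^{n+s}_v}{\xi/|\xi|})$ costs $2^{(n+s)\ga}$, exactly as in Lemma \ref{l:19l^1_1}. Your own bookkeeping therefore gives
\[
\|\nabla D_k^{n,s,v}\|_{L_1}\ \lc\ 2^{-k-(n+s)\ga(d-1)+(n+s)\iota}\cdot 2^{2N(n+s)\ga}\|\Om\|_1,
\]
and hence the final bound carries the extra factor $2^{2N(n+s)\ga}$. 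This factor \emph{cannot} be ``absorbed by tuning $\iota<\ga$'': it is independent of $\iota$, and no relation between $\iota$ and $\ga$ makes it disappear. What is true is that the \emph{weaker} inequality you actually prove still suffices for Lemma \ref{l:19L^1} (one simply requires $1-\eps_0-\iota-2N\ga>0$ in the final choice of parameters), but it does not establish Lemma \ref{l:19bl1can} as stated.

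The paper obtains the clean bound by a different route that sidesteps the anisotropic multiplier altogether. One writes $\Lam_k=\tilde{\Lam}_k\Lam_k$ and factors
\[
\|(I-G_{n+s,v})\Lam_kT_j^{n,s,v}b_{n-j,s}\|_{L_1(\A)}\le\|(I-G_{n+s,v})\tilde{\Lam}_k\|_{L_1\to L_1}\,\|\Lam_kT_j^{n,s,v}b_{n-j,s}\|_{L_1(\A)}.
\]
The first factor is shown to be $\lc1$ \emph{uniformly in $k,n,s,v$}: the multiplier $[1-\Phi(2^{(n+s)\ga}\inn{e^{n+s}_v}{\xi/|\xi|})]\tilde{\beta}_k(\xi)$ becomes a fixed smooth bump after the anisotropic rescaling $A_k^{n,s,v}$ (which dilates by $2^{-(n+s)\ga-k}$ along $e_v^{n+s}$ and by $2^{-k}$ transversally), so its inverse Fourier transform has bounded $L_1$ norm. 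For the second factor one writes $\Lam_kT_j^{n,s,v}b_{n-j,s}=K_j^{n,s,v}*(\check{\beta}_k*b_{n-j,s})$; now the cancellation of $b_{n-j,s}$ is played against the \emph{isotropic} Schwartz kernel $\check{\beta}_k$, whose gradient has $L_1$ norm exactly $\lc2^{-k}$ with no $(n+s)\ga$--losses, and $\|K_j^{n,s,v}\|_{L_1}\lc2^{-(n+s)\ga(d-1)+(n+s)\iota}\|\Om\|_1$ supplies the remaining factors. In short: the paper separates the rough directional cutoff from the cancellation step, whereas your approach keeps them together and thereby incurs the spurious $2^{2N(n+s)\ga}$.
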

\begin{proof}
Using $\Lam_k=\Lam_k\tilde{\Lam}_k$, we write
\Bes
\begin{split}
\|(I-G_{n+s,v})&\Lam_kT_j^{n,s,v}b_{n-j,s}\|_{L_1(\A)}=\|(I-G_{n+s,v})\tilde{\Lam}_k\Lam_kT_j^{n,s,v}b_{n-j,s}\|_{L_1(\A)}\\
&\lc\|(I-G_{n+s,v})\tilde{\Lam}_k\|_{L_1(\A)\rta L_1(\A)}\|\Lam_kT_j^{n,s,v}b_{n-j,s}\|_{L_1(\A)}.
\end{split}
\Ees
Then it is easy to see that the proof of this lemma follows from the following two estimates:
\Be\label{e:19l1l1}
\|(I-G_{n+s,v})\tilde{\Lam}_k\|_{L_1(\A)\rta L_1(\A)}\lc1
\Ee
and
\Be\label{e:19bjcan}
\|\Lam_kT_j^{n,s,v}b_{n-j,s}\|_{L_1(\A)}\lc2^{-(n+s)\ga (d-1)-(n+s)+j-k+(n+s)\iota}\|\Om\|_1\|b_{n-j,s}\|_{L_1(\A)}.
\Ee

We first consider the estimate \eqref{e:19l1l1}. The kernel of $(I-G_{n+s,v})\tilde{\Lam}_k$ is the inverse Fourier transform of  $\tilde{h}_{k,n,s,v}(\xi)=[1-\Phi(2^{(n+s)\ga}\inn{e^{n+s}_v}{\xi/|\xi|})]\tilde{\beta}_k(\xi)$. So
$$\|(I-G_{n+s,v})\tilde{\Lam}_k\|_{L_1(\A)\rta L_1(\A)}\lc\|\mathcal{F}(\tilde{h}_{k,n,s,v})\|_{L^1(\R^d)}=\big\|\mathcal{F}\big[\tilde{h}_{k,n,s,v}(A_k^{n,s,v}\cdot)\big]\big\|_{L_1(\R^d)}$$
where $A_k^{n,s,v}$ is an invertible linear transform such that $A_k^{n,s,v}e^{n+s}_v=2^{-(n+s)\ga-k}e^{n+s}_v$ and $A_k^{n,s,v}y=2^{-k}y$ if $\inn{y}{e^{n+s}_v}=0$. For all $\alp\in\Z^d_+$, it is straightforward to check that
$$\big\|\pari^{\alp}\big[\tilde{h}_{k,n,s,v}(A_k^{n,s,v}\cdot)\big]\big\|_{L_2(\R^d)}\lc C_\alp$$
uniformly with $k,n,s,v$ (see \cite[Page 100]{See96}). Therefore splitting the following integral into two parts and using Plancherel's theorem, we get
\Bes
\begin{split}
&\big\|\mathcal{F}\big[\tilde{h}_{k,n,s,v}(A_k^{n,s,v}\cdot)\big]\big\|_{L^1(\R^d)}=\Big(\int_{|\xi|\geq1}+\int_{|\xi|<1}\Big)\big|\mathcal{F}\big[\tilde{h}_{k,n,s,v}(A_k^{n,s,v}\cdot)\big](\xi)\big|d\xi\\
&\lc\Big(\int_{|\xi|\geq1}\fr{d\xi}{|\xi|^{2([\fr{d}{2}]+1)}}\Big)^{\fr{1}{2}}\sum_{|\alp|=[\fr{d}{2}]+1}\Big(\int_{\R^d}|\xi^{\alp}\mathcal{F}\big[\tilde{h}_{k,n,s,v}(A_k^{n,s,v}\cdot)\big](\xi)|^2d\xi\Big)^{1/2}\\
&\quad+\big\|\mathcal{F}\big[\tilde{h}_{k,n,s,v}(A_k^{n,s,v})\big]\big\|_{L_2(\R^d)}\\
&\lc\sum_{|\alp|=[\fr{d}{2}]+1}\big\|\pari^{\alp}\big[\tilde{h}_{k,n,s,v}\big(A_k^{n,s,v}\cdot\big)\big]\big\|_{L_2(\R^d)}+\|\tilde{h}_{k,n,s,v}(A_k^{n,s,v}\cdot)\|_{L_2(\R^d)}\lc1,
\end{split}
\Ees
which completes the proof of \eqref{e:19l1l1}.

Now we turn to another estimate \eqref{e:19bjcan}. Write
\Bes
\Lam_kT_j^{n,s,v}b_{n-j,s}=\check{\beta}_k*K_j^{n,s,v}*b_{n-j,s}=K_j^{n,s,v}*\check{\beta}_k*b_{n-j,s}.
\Ees
Then by the estimate \eqref{e:19Kjnsv} of $K_j^{n,s,v}$, we get that
\Be\label{e:19btjcan}
\begin{split}
\|\Lam_kT_j^{n,s,v}b_{n-j,s}\|_{L_1(\A)}&\leq\|K_j^{n,s,v}\|_{L_1(\R^d)}\|\check{\beta}_k*b_{n-j,s}\|_{L_1(\A)}\\
&\lc2^{-(n+s)(\ga(d-1)-\iota)}\|\Om\|_{1}\|\check{\beta}_k*b_{n-j,s}\|_{L_1(\A)}.
\end{split}
\Ee
Note that $\beta_k(\xi)=\beta(2^k\xi)$, we get $\check{\beta}_k(x)=2^{-kd}\check{\beta}(2^{-k}x)$. Therefore we see
\Be\label{e:19betak}
\int_{\R^d}|\nabla[\check{\beta}_k](x)|dx=2^{-k(d+1)}\int_{\R^d}|\nabla(\check{\beta})(2^{-k}x)|dx=2^{-k}\int_{\R^d}|\nabla(\check{\beta})(x)|dx.
\Ee

Using the cancelation property (ii) in Lemma \ref{l:19bbasic}, we see that for all $Q\in\Q_{n-j+s}$,
$
\int_{Q}b_{n-j,s}(y)dy=0.
$
Let $y_Q$ be the center of $Q$. Notice that for all $y\in Q$, $|y-y_Q|\lc2^{j-n-s}$. Using this cancelation property, we then get
\Bes
\begin{split}
&\quad\|\check{\beta}_k*b_{n-j,s}\|_{L_1(\A)}\\
&=\int_{\R^d}\tau\Big(\Big|\sum_{Q\in\Q_{n-j+s}}\int_Q[\check{\beta}_k(x-y)-\check{\beta}_k(x-y_Q)]b_{n-j,s}(y)dy\Big|\Big)dx\\
&\leq\int_{\R^d}\sum_{Q\in\Q_{n-j+s}}\int_Q\Big|\int_0^1\inn{y-y_Q}{\nabla {[\check{\beta}_k]}(x-\rho y-(1-\rho)y_Q)}d\rho\Big|\tau(|b_{n-j,s}(y)|)dydx\\
&\lc2^{j-n-s-k}\|b_{n-j,s}\|_{L_1(\A)},
\end{split}
\Ees
where in the second inequality we just use the mean value formula.
Combining this inequality with \eqref{e:19btjcan} yields the estimate \eqref{e:19bjcan}. Hence we finish the proof of this lemma.
\end{proof}

Now we conclude the proof of Lemma \ref{l:19L^1} as follows. Let $\eps_0\in(0,1)$ be a constant which will be chosen later. Notice that $\card(\Theta_{n+s})\lc2^{(n+s)\ga(d-1)}$.
Then by Lemma \ref{l:19l^1_1} with $N={[d/2]+1}$, Lemma \ref{l:19bl1can} and the property $\sum_j\|b_{n-j,s}\|_{L_1(\A)}\lc\|f\|_{L_1(\A)}$ in Lemma \ref{l:19bbasic}, we get that
\Bes
\begin{split}
&\quad\sum_j\sum_{v\in\Theta_{n+s}}\|L_j^{n,s,v}b_{n-j,s}\|_{L_1(\A)}\\
&\leq\sum_j\sum_{v\in\Theta_{n+s}}\Big(\sum_{k\leq j-[(n+s)\eps_0]}+\sum_{k\geq j-[(n+s)\eps_0]}\Big)\|(I-G_{n+s,v})\Lam_kT_j^{n,s,v}b_{n-j,s}\|_{L_1(\A)}\\
&\lc\sum_j\Big(2^{-(n+s)(\eps_0-\ga(3+2[\fr{d}{2}])-\iota)}+
2^{-{(n+s)(1-\eps_0-\iota)}}\Big)\|b_{n-j,s}\|_{L_1(\A)}\lc 2^{-(n+s)\alp}\|f\|_{L_1(\A)}
\end{split}
\Ees
where we choose the constants $0<\iota\ll\ga\ll\eps_0\ll1$ such that the constant $\alp$ is defined by
$$\alp=\min\{\eps_0-\ga(3+2[\fr{d}{2}])-\iota,1-\eps_0-\iota\}>0.$$
Hence we complete the proof.
$\hfill{} \Box$
\vskip0.24cm

\section{Proofs of Lemmas related to the good functions}\label{s:196}
\vskip0.24cm
In this section, we begin to prove all lemmas of the good functions in Subsection \ref{s:1943}.
The proofs for off-diagonal terms are similar to those for bad functions in Section \ref{s:195}, so we shall be brief and only indicate necessary changes in the proofs of off-diagonal terms. We first consider the proofs of diagonal terms.

\subsection{Proof of Lemma \ref{l:19tgl2}}\label{s:1961}\quad
\vskip0.24cm

Recall the definition of $T$. Let $\mathbf{K}_j$ be the kernel of the operator $T_j$, i.e. $\mathbf{K}_j(x)=\Om(x)\phi_j(x)|x|^{-d}$.  Notice that $\{\eps_j\}_j$ is a Rademacher sequence on a probability space $(\mathfrak{m},P)$,  then applying the equality \eqref{e:19orth}, we write
\Bes
\|Tf\|_{L_2(L_\infty(\mathfrak{m}\ot\A))}^2=\sum_{j\in\Z}\|T_jf\|_{L_2(\A)}^2=(2\pi)^{-d/2}\int_{\R^d}\sum_{j\in\Z}|\widehat{\mathbf K_j}(\xi)|^2\tau(|\hat{f}(\xi)|^2)d\xi
\Ees
where the second equality follows from Plancherel's theorem since $L_2(\M)$ is a Hilbert space. In the following we show that
\Be\label{e:19plan}
\sum_{j\in\Z}|\widehat{\mathbf K_j}(\xi)|^2<\infty
\Ee
holds for almost every $\xi\in\R^d$. Once we prove the inequality \eqref{e:19plan}, Lemma \ref{l:19tgl2} follows from Plancherel's theorem. Now we fix $\xi\neq0$. By the cancelation property of $\Om$, $\int_{\S^{d-1}}\Om(\tet)d\si(\tet)=0$, we get that
\Bes
|\widehat{\mathbf K_j}(\xi)|=\Big|\int_{\R^d}K_j(x)(e^{-i\xi x}-1)dx\Big|\lc2^j|\xi|\|\Om\|_1.
\Ees
Therefore the sum over all $j$s satisfying $2^j|\xi|\leq1$ is convergent.

Now we turn to the case $2^j|\xi|>1$. We split the kernel $\Om(\tet)$ into two parts: $\Om_1(\tet)=\Om(\tet)\chi_{\{\tet\in\S^{d-1}:|\Om(\tet)|\leq2^{j\nu}|\xi|^\nu\|\Om\|_1\}}$
and $1-\Om_1(\tet)$ for some constant $\nu\in(0,1/2)$. We first consider $\Om_1$. By making a change of variable $x=r\tet$, we get
\Be\label{e:19kjpl}
|\widehat{\mathbf K_j}(\xi)|\leq\int_{\S^{d-1}}|\Om_1(\tet)|\Big|\int_{\R}e^{-ir\inn{\tet}{\xi}}\phi_j(r)r^{-1}dr\Big|d\si(\tet).
\Ee

It is easy to see that $|\int_{\R}e^{-ir\inn{\tet}{\xi}}\phi_j(r)r^{-1}dr|$ is finite. By integrating by parts with the variable $r$, we get that
\Bes
\Big|\int_{\R}e^{-ir\inn{\tet}{\xi}}\phi_j(r)r^{-1}dr\Big|
=\Big|\int_{\R}e^{-ir\inn{\tet}{\xi}}{\inn{\tet}{\xi}}^{-1}\pari_r[\phi_j(r)r^{-1}]dr\Big|\lc(2^j|\xi|)^{-1}|\inn{\tet}{\xi'}|^{-1}
\Ees
where $\xi'=\xi/|\xi|$. Interpolating these two estimates we get that for any $\del\in(1/2,1)$,
\Bes
\Big|\int_{\R}e^{-ir\inn{\tet}{\xi}}\phi_j(r)r^{-1}dr\Big|\lc(2^j|\xi|)^{-\del}|\inn{\tet}{\xi'}|^{-\del}.
\Ees
Plugging the above estimate into \eqref{e:19kjpl} with the fact $\int_{\S^{d-1}}|\inn{\tet}{\xi'}|^{-\del}d\si(\tet)<\infty$, we hence get that
$
|\widehat{\mathbf K_j}(\xi)|\lc(2^j|\xi|)^{-\del+\nu}\|\Om\|_1,
$
which is sufficient for us taking a sum over all $j$s satisfying $2^j|\xi|>1$. Consider the other term $1-\Om_1$. Then we get
\Bes
\begin{split}
&\sum_{j:\ 2^j|\xi|>1}|\widehat{\mathbf K_j}(\xi)|^2
\lc\sum_{j:\ 2^j|\xi|>1}\Big(\int_{\{\tet\in\S^{d-1}:|\Om(\tet)|\geq(2^j|\xi|)^{\nu}\|\Om\|_1\}}|\Om(\tet)|d\si(\tet)\Big)^2\\
&=\int_{\S^{d-1}\times\S^{d-1}}\#\big\{j:1<2^j|\xi|\leq\min\{(|\Om(\tet)|/\|\Om\|_1)^{\fr{1}{\nu}},(|\Om(\alp)|/\|\Om\|_1)^{\fr{1}{\nu}}\}\big\}\\
&\quad\quad\times|\Om(\tet)|\cdot|\Om(\alp)|d\si(\tet)d\si(\alp)\\
&\lc\Big(\int_{\S^{d-1}}|\Om(\tet)|(1+[\log^+(|\Om(\tet)/\|\Om\|_1)]^{1/2})d\si(\tet)\Big)^2<\infty
\end{split}
\Ees
where the last inequality just follows from $\Om\in L(\log^+ L)^{1/2}(\S^{d-1})$. Hence we complete the proof.
$\hfill{} \Box$
\vskip0.24cm

\subsection{Proof of Lemma \ref{l:19gomiota}}\quad
\vskip0.24cm

Recall the definition of the kernel $K_{j,\iota}^{n,s}$ in \eqref{e:19kjiota}. By Young's inequality, it is easy to see that
\Bes
\|T_{j,\iota}^{n,s}g_{n-j,s}\|_{L_2(\A)}\leq\|K_{j,\iota}^{n,s}\|_{L_1(\R^d)}\|g_{n-j,s}\|_{L_2(\A)}\lc\int_{D^{\iota}}|\Om(\tet)|d\si(\tet)\|g_{n-j,s}\|_{L_2(\A)}.
\Ees
Now applying $\sum_j\|g_{n-j,s}\|_{L_2(\A)}^2\lc\lam\C_\Om^{-1}\|f\|_{L_1(\A)}$ in Lemma \ref{l:19gbasic} and the above estimate, we get that
\Bes
\begin{split}
&\quad\sum_{s\geq1}\sum_{n\geq100}\Big(\sum_j\|T_{j,\iota}^{n,s}g_{n-j,s}\|^2_{L_2(\A)}\Big)^{\fr{1}{2}}
\lc\sum_{s\geq1}\sum_{n\geq100}\int_{D^{\iota}}|\Om(\tet)|d\si(\tet)\Big(\lam\C_\Om^{-1}\|f\|_{L_1(\A)}\Big)^{1/2}\\
&\lc\int_{\S^{d-1}}\#\{(s,n):s\geq1,n\geq100,|\Om(\tet)|\geq2^{(n+s)\iota}\|\Om\|_1\}|\Om(\tet)|d\si(\tet)\Big(\lam\C_\Om^{-1}\|f\|_{L_1(\A)}\Big)^{1/2}\\
&\lc\Big(\C_\Om\lam\|f\|_{L_1(\A)}\Big)^{1/2},
\end{split}
\Ees
which is our desired estimate. Hence we complete  the proof.
$\hfill{} \Box$
\vskip0.24cm

\subsection{Proof of Lemma \ref{l:19gl2}}\quad
\vskip0.24cm

By applying Plancherel's theorem, the convex inequality for the operator valued function \eqref{e:19conv},
the fact \eqref{e:19obser} and finally Plancherel's theorem again, we get
\begin{equation}\label{e:19gL^2key}
\begin{split}
&\quad\Big\|\sum\limits_{v\in\Theta_{n+s}}G_{n+s,v}T^{n,s,v}_jg_{n-j,s}\Big\|^2_{L_2(\A)}\\
&=(2\pi)^{-\fr{d}{2}}\int_{\R^d}
\tau\Big(\Big|\sum\limits_{v\in\Theta_{n+s}}\Phi(2^{(n+s)\ga}\inn{e^{n+s}_v}{\xi/|\xi|})
\mathcal{F}\big(T^{n,s,v}_jg_{n-j,s}\big)(\xi)\Big|^2\Big)d\xi\\
&\lc\int_{\R^d}\sum\limits_{v\in\Theta_{n+s}}\Phi^2(2^{(n+s)\ga}\inn{e^{n+s}_v}{\xi/|\xi|})
 \sum\limits_{v\in\Theta_{n+s}}\tau\Big(\Big|\mathcal{F}\big(T^{n,s,v}_jg_{n-j,s}\big)(\xi)\Big|^2\Big)d\xi\\
&\lc2^{(n+s)\ga(d-2)} \sum\limits_{v\in\Theta_{n+s}}\big\|T^{n,s,v}_jg_{n-j,s}\big\|^2_{L_2(\A)}.\\
\end{split}
\end{equation}

Using Young's inequality and \eqref{e:19Kjnsv}, we get that $
\big\|T^{n,s,v}_jg_{n-j,s}\big\|^2_{L_2(\A)}$ is bounded by
\Be\label{e:19gl2kj}
\|K_j^{n,s,v}\|_{L_1(\R^d)}^2\|g_{n-j,s}\|^2_{L_2(\A)}\lc2^{2(n+s)(-\ga(d-1)+\iota)}\|\Om\|^2_1\|g_{n-j,s}\|^2_{L_2(\A)}.
\Ee

Now plugging \eqref{e:19gl2kj} into \eqref{e:19gL^2key} and using the fact $\card(\Theta_{n+s})\lc2^{(n+s)\ga(d-1)}$, we get that
\Bes
\Big\|\sum_{v\in\Theta_{n+s}}G_{n+s,v}T^{n,s,v}_jg_{n-j,s}\Big\|^2_{L_2(\A)}\lc2^{(n+s)(-\ga+2\iota)}\|\Om\|^2_1\|g_{n-j,s}\|^2_{L_2(\A)}
\Ees
which is just our desired estimate.
$\hfill{} \Box$
\vskip0.24cm

\subsection{Proof of Lemma \ref{l:19gl1}}\quad
\vskip0.24cm
Using $I=\sum_k\Lam_k$ and triangle inequality, we get that
\Bes
\|(I-G_{n+s,v})T_j^{n,s,v}g_{n-j,s}\|_{L_2(\A)}\leq\sum_k
\|(I-G_{n+s,v})\Lam_kT_j^{n,s,v}g_{n-j,s}\|_{L_2(\A)}
\Ees
Let $\eps_0\in(0,1)$ be a constant which will be chosen later. Separating the above sum into two parts, we will prove that
\Be\label{e:19b1kgeq}
\begin{split}
\sum_{k\leq j-[(n+s)\eps_0]}\|(I-G_{n+s,v})&\Lam_kT_j^{n,s,v}g_{n-j,s}\|_{L_2(\A)}\\
&\lc2^{-(n+s)(\ga(d-1)+\eps_0-\ga(3+2[\fr{d}{2}])-\iota)}\|\Om\|_1\|g_{n-j,s}\|_{L_2(\A)},
\end{split}
\Ee
and
\Be\label{e:19b1kle}
\begin{split}
\sum_{k> j-[(n+s)\eps_0]}\|(I-G_{n+s,v})&\Lam_kT_j^{n,s,v}g_{n-j,s}\|_{L_2(\A)}\\
&\lc2^{-(n+s)(\ga(d-1)+1-\eps_0-\iota)}\|\Om\|_1\|g_{n-j,s}\|_{L_2(\A)}.
\end{split}
\Ee

Based on \eqref{e:19b1kgeq}, \eqref{e:19b1kle} and the fact $\card(\Theta_{n+s})\lc2^{(n+s)\ga(d-1)}$, we finish the proof of this lemma by choosing the constants $0<\iota\ll\ga\ll\eps_0\ll1$ such that the constant $\ka$ defined by
$$\ka=\min\{\eps_0-\ga(3+2[\fr{d}{2}])-\iota,1-\eps_0-\iota\}>0.$$

Now we give the proof of \eqref{e:19b1kgeq} and \eqref{e:19b1kle}. Consider \eqref{e:19b1kgeq} first. Recall that $D_k^{n,s,v}(x)$ is defined as the kernel of the operator $(I-G_{n+s,v})\Lam_{k}T^{n,s,v}_{j}$ in \eqref{e:19dkde}. Applying Young's inequality and the estimate of $D_k^{n,s,v}$ in \eqref{e:19dknsv}, we get
\Bes
\begin{split}
\|(I-G_{n+s,v})\Lam_kT_j^{n,s,v}g_{n-j,s}\|_{L_2(\A)}&\leq\|D_k^{n,s,v}\|_{L_1(\R^d)}\|g_{n-j,s}\|_{L_2(\A)}\\
&\lc2^{-(n+s)(\ga(d-1)-\iota-(3+2[\fr{d}{2}])\ga)-j+k}\|\Om\|_1\|g_{n-j,s}\|_{L_2(\A)}.
\end{split}
\Ees
Taking a sum over $k\leq j-[(n+s)\eps_0]$ yields \eqref{e:19b1kgeq}.

Next we turn to the proof of \eqref{e:19b1kle}. By Plancherel's theorem, we see that
\Be\label{e:19ignv}
\begin{split}
\|(I-G_{n+s,v})\Lam_kT_j^{n,s,v}g_{n-j,s}\|_{L_2(\A)}\lc\|\Lam_kT_j^{n,s,v}g_{n-j,s}\|_{L_2(\A)}.
\end{split}
\Ee
Write
\Bes
\Lam_kT_j^{n,s,v}g_{n-j,s}=\check{\beta}_k*K_j^{n,s,v}*g_{n-j,s}=K_j^{n,s,v}*\check{\beta}_k*g_{n-j,s}.
\Ees
Then by Young's inequality and \eqref{e:19Kjnsv}, we get that
\Be\label{e:19gtjcan}
\begin{split}
\|\Lam_kT_j^{n,s,v}g_{n-j,s}\|_{L_2(\A)}&\leq\|K_j^{n,s,v}\|_{L_1(\R^d)}\|\check{\beta}_k*g_{n-j,s}\|_{L_2(\A)}\\
&\lc2^{-(n+s)(\ga(d-1)-\iota)}\|\Om\|_{1}\|\check{\beta}_k*g_{n-j,s}\|_{L_2(\A)}.
\end{split}
\Ee

Recall the definition of $g_{n-j,s}$, we have the following cancelation property:  for all $s\geq1$ and $Q\in\Q_{n-j+s-1}$,
$
\int_{Q}g_{n-j,s}(y)dy=0.
$
Let $y_Q$ be the center of $Q$. Using this cancelation property, we get
\Bes
\begin{split}
\check{\beta}_k*g_{n-j,s}(x)&=\int_{\R^d}\sum_{Q\in\Q_{n-j+s-1}}[\check{\beta}_k(x-y)-\check{\beta}_k(x-y_Q)]\chi_{Q}(y)g_{n-j,s}(y)dy\\
&=:\int_{\R^d}K_k(x,y)g_{n-j,s}(y)dy,
\end{split}
\Ees
with $K_k(x,y)=\sum_{Q\in\Q_{n-j+s-1}}[\check{\beta}_k(x-y)-\check{\beta}_k(x-y_Q)]\chi_{Q}(y)$. Below we will apply Schur's lemma to give an estimate of $\|\check{\beta}_k*g_{n-j,s}\|_{L_2(\A)}$.
We first consider $K_k(x,y)$ as follows: For any $y$, there exists a unique cube $Q\in\Q_{n-j+s-1}$ such that $y\in Q$.
Then by \eqref{e:19betak},
\Be\label{e:19bky}
\int_{\R^d}|K_k(x,y)|dx\leq\int_{\R^d}|y-y_Q|\int_0^1|\nabla [\check{\beta}_k](x-\rho y-(1-\rho)y_Q)|d\rho dx\lc2^{j-n-s-k}.
\Ee
For any $x\in\R^d$, we have the following estimate
\Be\label{e:19bkx}
\begin{split}
\int_{\R^d}|K_k(x,&y)|dy\leq\sum_{Q\in\Q_{n-j+s-1}}\int_{Q}|y-y_Q|\int_0^1|\nabla [\check{\beta}_k](x-\rho y-(1-\rho)y_Q)|d\rho dy\\
&\lc2^{j-n-s-k}\int_0^1\sum_{Q\in\Q_{n-j+s-1}}2^{-kd}\int_{Q}|\nabla [{\check{\beta}}](2^{-k}(x-\rho y-(1-\rho)y_Q))| dy d\rho\\
&\lc2^{j-n-s-k}
\end{split}
\Ee
once we can show that the estimate below holds uniformly in $x,\rho,k$
\Be\label{e:19betaunif}
\sum_{Q\in\Q_{n-j+s-1}}2^{-kd}\int_{Q}|\nabla [{\check{\beta}}](2^{-k}(x-\rho y-(1-\rho)y_Q))| dy\lc 1.
\Ee

In the following we prove \eqref{e:19betaunif}. Making a change of variables $\tilde{y}=2^{-k}y$, the integral now integrates over all cubes $Q\in\Q_{n-j+s-1+k}$ with $\tilde{y}_Q=2^{-k}y_Q$ the center of this cube $Q$ which is rewritten as follows
\Bes
\begin{split}
&\sum_{Q\in \Q_{n-j+s-1+k}}\int_{Q}|\nabla [\check{\beta}](2^{-k}x-\rho \tilde{y}-(1-\rho)\tilde{y}_Q)|d\tilde{y}\\
&=\Big(\sum_{dist(Q,2^{-k}x)\leq2}+\sum_{l=1}^\infty\sum_{2^l< dist(Q,2^{-k}x)\leq2^{l+1}}\Big)\int_{Q}|\nabla [\check{\beta}](2^{-k}x-\rho \tilde{y}-(1-\rho)\tilde{y}_Q)|d\tilde{y}\\
&=:I+II,
\end{split}
\Ees
where in the second line we split the sum $\sum_{Q\in\Q_{n-j+s-1+k}}$ into two parts.
Notice that the sidelength of $Q\in \Q_{n-j+s-1+k}$ is $2^{-n+j-s+1-k}$ which is less than $1$ since we only consider the sum over $k>j-[(n+s)\eps_0]$ and $0<\eps_0\ll1$. For $I$, note that the cubes belonging in $ \Q_{n-j+s-1+k}$ are disjoint with interior, therefore the sum $\sum_{dist(Q,2^{-k}x)\leq2}$ over these cubes are supported in $B(2^{-k}x,2+\sqrt{d})$, a ball with center $2^{-k}x$ and radius $2+\sqrt{d}$. Thus we get
\Bes
|I|\lc\sum_{dist(Q,2^{-k}x)\leq2}|Q|\leq |B(2^{-k}x,2+\sqrt{d})|\leq C.
\Ees
Consider $II$. Since $\tilde{y}$ lies in a cube $Q\in\Q_{n-j+s-1+k}$ and $\tilde{y}_Q$ is the center of this cube, we get $\rho \tilde{y}+(1-\rho)\tilde{y}_Q$ lies in a line segment which is started at $\tilde{y}_Q$ and ended at $\tilde{y}$. So we have  $\rho \tilde{y}+(1-\rho)\tilde{y}_Q\in Q$ for any $\rho\in [0,1]$.
Because of $2^l<dist(Q,2^{-k}x)\leq2^{l+1}$ and $l(Q)\leq 1$,
we get $|2^{-k}x-\rho \tilde{y}-(1-\rho)\tilde{y}_Q|\approx 2^l$.
Combining the above estimates, we get
\Bes
\begin{split}
&|II|\lc\sum_{l=1}^\infty\sum_{2^l< dist(Q,2^{-k}x)\leq2^{l+1}}|Q| 2^{-(d+1)l}\lc\sum_{l=1}^\infty 2^{-l}\leq C
\end{split}
\Ees
where in the first inequality we also use the fact  $\nabla[\check{\beta}]$ is a Schwartz function which decays fast away from the origin while the second inequality follows from that the sum over all cubes $2^l< dist(Q,2^{-k}x)\leq2^{l+1}$ are supported in a ball with center $2^{-k}x$ and approximate radius $2^{l}$. Hence we finish the proof of \eqref{e:19betaunif}.

Now utilizing Schur's lemma in Lemma \ref{l:19schur} with \eqref{e:19bky} and \eqref{e:19bkx}, we get
\Bes
\|\check{\beta}_k*g_{n-j,s}\|_{L_2(\A)}\lc2^{j-n-s-k}\|g_{n-j,s}\|_{L_2(\A)}.
\Ees
Plugging this inequality into \eqref{e:19gtjcan} and later \eqref{e:19ignv} , we get
\Bes
\|(I-G_{n+s,v})\Lam_kT_j^{n,s,v}g_{n-j,s}\|_{L_2(\A)}\lc2^{j-k-(n+s)(\ga(d-1)+1-\iota)}\|\Om\|_{1}\|g_{n-j,s}\|_{L_2(\A)}.
\Ees
Taking a sum of the above estimate  over $k>j-[(n+s)\eps_0]$ yields \eqref{e:19b1kle}. Hence we complete the proof.
$\hfill{} \Box$
\vskip0.24cm

\vskip1cm

\appendix
\section{Strong $(p,p)$ bound for $\{M_r\}_{r>0}$}

\begin{theorem}
Suppose that $\Om$ satisfies \eqref{e:19Hom} and $\Om\in L_1(\S^{d-1})$. Then the operator sequences $\{M_r\}_{r>0}$ is of maximal strong type $(p,p)$ for $1<p\leq\infty$, i.e.
\Bes
\|\{M_rf\}_{r>0}\|_{L_{p}(\A,\ell_\infty(0,\infty))}\lc\|\Om\|_{1}\|f\|_{L_p(\A)}.
\Ees
\end{theorem}

\begin{proof}
By decomposing the functions $\Om$ and $f$ as four parts (i.e. real positive part, real negative part, imaginary positive part, imaginary negative part), together with triangle inequality for the norm $\|\cdot\|_{L_{p}(\A,\ell_\infty(0,\infty))}$,
we only consider the case that $\Om$ is a positive function and $f$ is positive in $\A$. Then by \eqref{e:19positivem},
it is enough to show that for any $f\in L_p^+(\A)$ there exists a positive function $F\in L_p^+(\A)$ such that
\Be\label{e:19Mrfstrong}
M_rf\leq F,\ \forall r>0 \ \ \text{and} \ \
\|F\|_{L_p(\A)}\lc\|\Om\|_1\|f\|_{L_p(\A)}.
\Ee

We will use the method of rotation. Let $f\in L_p^+(\A)$, by making a change of variables $x-y=r\tet$, we get
\Bes
\begin{split}
M_rf(x)&=\fr{1}{|B(x,r)|}\int_{B(x,r)}\Om(x-y)f(y)dy\\
&=\fr{1}{v_n}\int_{\S^{d-1}}\Om(\tet)\fr{1}{r^d}\int_0^rf(x-s\tet)s^{d-1}dsd\sigma(\tet)\\
&\lc\int_{\S^{d-1}}\Om(\tet)\Big(\fr{1}{r}\int_0^rf(x-s\tet)ds\Big)d\sigma(\tet).
\end{split}
\Ees

For a fixed $\tet\in\S^{d-1}$, we define the directional Hardy-Littlewood average operator as
\Bes
\mathfrak{M}_r^\tet f(x)=\fr{1}{r}\int_0^rf(x-s\tet)ds.
\Ees
We will prove at the end of this section the following result
\Be\label{e:19dhardyl}
\|\{\mathfrak{M}_r^\tet f\}_{r>0}\|_{L_p(\A,\ell_\infty(0,\infty)}\lc\|f\|_{L_p(\A)}.
\Ee
Assuming \eqref{e:19dhardyl} and using  \eqref{e:19positivem},
there exists a positive function $F_\tet\in L_p^+(\A)$ such that
\Bes
\mathfrak{M}_r^\tet f\leq F_\tet,\ \forall r>0 \ \ \text{and} \ \
\|F_\tet\|_{L_p(\A)}\lc \|f\|_{L_p(\A)}.
\Ees
Now if set $F(x)=\int_{\S^{d-1}}\Om(\tet)F_\tet(x)d\sigma(\tet)$, then $M_rf(x)\lc F(x)$ and
\Bes
\|F\|_{L_p(\A)}\lc\int_{\S^{d-1}}\Om(\tet)\|F_\tet\|_{L_p(\A)}d\sigma(\tet)\lc\|\Om\|_1\|f\|_{L_p(\A)}.
\Ees
Thus $F$ is the desired function satisfying \eqref{e:19Mrfstrong}.

It remains to show \eqref{e:19dhardyl}. Let $e_{1}=(1,0,\dotsm,0)$ be the unit vector. Observing that for any orthogonal matrix $A$, the following identity holds
\begin{equation}\label{rough}
\mathfrak{M}_r^{A(e_{1})}f(x)=\mathfrak{M}_r^{e_{1}}(f\circ A)(A^{-1}x).
\end{equation}
This implies that the $L_{p}$ boundedness of $\{\mathfrak{M}_r^{\theta}\}_{r>0}$ can be reduced to that of $\{\mathfrak{M}_r^{e_{1}}\}_{r>0}$.
Let $f\in L_p(L_\infty(\R^d)\overline{\otimes}\M)$. Without loss of generality, we may assume that $f$ is positive.
Fixing $x_2,...,x_d\in\R$, we consider $f(\cdot,x_2,...,x_d)$ as a function in $L_p(L_\infty(\R)\overline{\otimes}\M)_{+}$. By the strong type $(p,p)$ boundedness of noncommutative Hardy-Littlewood maximal operator (see \cite{Mei07}), we know that for $1<p\leq\infty$
$$\big\|\{\mathfrak{M}_{r}^{e_1}f(\cdot,x_{2},...,x_{d})\}_{r>0}\big\|_{L_{p}(L_\infty(\R)\overline{\otimes}\M,\ell_\infty(0,\infty))}
\lesssim\|f(\cdot,x_{2},...,x_{d})\|_{L_{p}(L_\infty(\R)\overline{\otimes}\M)}.$$
By \eqref{e:19positivem}, there exists a positive function $F(\cdot,x_2,\cdots,x_d)\in L_p(L_\infty(\R)\overline{\otimes}\M)$ such that for any $r>0$, $\mathfrak{M}^{e_1}_r f(x)\leq F(x)$ and
\begin{eqnarray*}
\|F(\cdot,x_{2},...,x_{d})\|_{L_{p}(L_\infty(\R)\overline{\otimes}\M)}\lc
\|f(\cdot,x_{2},...,x_{d})\|_{L_{p}(L_\infty(\R)\overline{\otimes}\M)}.
\end{eqnarray*}

Then it is easy to see that
$$\|F\|_{L_p(L_\infty(\R^d)\overline{\otimes}\M)}\lesssim\|f\|_{L_p(L_\infty(\R^d)\overline{\otimes}\M)}.$$

Therefore, we conclude that $\{\mathfrak{M}_r^{e_1}\}_{r>0}$ is of strong type $(p,p)$.
\end{proof}

\vskip0.4cm
\subsection*{Acknowledgement} The author would like to thank the referee for his/her very careful reading and many valuable suggestions.

\bibliographystyle{amsplain}
\bibliography{rf}

\end{document}